\DeclareFontFamily{U}{MnSymbolC}{}
\DeclareSymbolFont{MnSyC}{U}{MnSymbolC}{m}{n}
\DeclareFontShape{U}{MnSymbolC}{m}{n}{
    <-6>  MnSymbolC5
   <6-7>  MnSymbolC6
   <7-8>  MnSymbolC7
   <8-9>  MnSymbolC8
   <9-10> MnSymbolC9
  <10-12> MnSymbolC10
  <12->   MnSymbolC12}{}
\DeclareMathSymbol{\hook}{\mathbin}{MnSyC}{'270}
\newcommand{\bb}[1]{
    \mathbb{#1}
}
\renewcommand{\cal}[1]{
    \mathcal{#1}
}
\newcommand{\sserif}[1]{
    \text{\normalfont{\fontfamily{lmss}\selectfont{#1}}}
}
\newcommand{\rarr}{
    \rightarrow
}
\newcommand{\lrarr}{
    \leftrightarrow
}
\renewcommand{\:}{
    \colon
}
\newcommand{\inner}[2]{
    \langle #1,#2 \rangle
}
\newcommand{\dinner}[2]{
    \langle\!\mkern-1mu\langle #1,#2 \rangle\!\mkern-1mu\rangle
}
\newcommand{\dbinner}[2]{
    \big\langle\!\!\big\langle #1,#2 \big\rangle\!\!\big\rangle
}
\renewcommand{\del}[1]{
    \partial #1
}
\newcommand{\delbar}[1]{
    \overline{\partial} #1
}
\newcommand{\tr}[1]{
    \text{\normalfont tr}\,#1
}
\newcommand{\wtilde}[1]{
    \widetilde{#1}
}
\newcommand{\Ric}{
    \text{\normalfont{Ric}}
}
\newcommand{\Rm}{
    \text{\normalfont{Rm}}
}
\newcommand{\vol}{
    \text{\normalfont{vol}}
}
\renewcommand{\Re}{
    \text{\normalfont{Re}\,}
}
\renewcommand{\Im}{
    \text{\normalfont{Im}\,}
}
\newcommand{\lot}{
    \text{\textit{l.o.t.}}
}
\newcommand{\grad}{
    \text{\normalfont{grad}}\,
}
\renewcommand{\div}{
    \text{\normalfont{div}}\,
}
\newcommand{\curl}{
    \text{\normalfont{curl}}\,
}
\newcommand\numberthis{\addtocounter{equation}{1}\tag{\theequation}}
\theoremstyle{plain}
\newtheorem{thm}{Theorem}[section]
\newtheorem{lem}[thm]{Lemma}
\newtheorem{propn}[thm]{Proposition}
\newtheorem{cor}[thm]{Corollary}
\theoremstyle{definition}
\newtheorem{rmk}[thm]{Remark}
\theoremstyle{remark}
\numberwithin{equation}{section}
\title{Flows of conformally coclosed $G_2$-structures with dilaton}
\author{Spiro Karigiannis\thanks{Department of Pure Mathematics, University of Waterloo, \texttt{karigiannis@uwaterloo.ca}} \and
S\'ebastien Picard\thanks{Department of Mathematics, The University of British Columbia, \texttt{spicard@math.ubc.ca}} 
\and Caleb Suan\thanks{Department of Mathematics, The Chinese University of Hong Kong, \texttt{kwsuan@math.cuhk.edu.hk}}
}
\date{}
\begin{document}

\maketitle

\begin{abstract}
We study flows of $G_2$-structures guided by the principle of dimensional reduction: natural geometric flows in $G_2$-geometry reduce to natural flows in complex geometry. Our main examples are the $G_2$-Laplacian coflow, which lifts the K\"ahler--Ricci flow, and a 7-dimensional lift of the anomaly flow on complex threefolds. The $G_2$-lift of the anomaly flow deforms conformally coclosed $G_2$-structures. We compare the $G_2$-anomaly flow to the $G_2$-Laplacian coflow, and investigate short-time existence and fixed points.
\end{abstract}

\tableofcontents

\section{Introduction}
Hamilton’s formulation of the Ricci flow in 1982~\cite{Hamilton1982} marked the beginning of the theory of geometric flows. The Ricci flow equation deforms a metric tensor $g_{ij}$ by the equation
\[
\partial_t g_{ij} = - 2 \Ric_{ij}.
\]
When additional geometric structures are present, there are other geometric flows adapted to preserve the specified particular geometry. The setup for the present work is an oriented 7-dimensional manifold $M^7$ equipped with a positive 3-form $\varphi \in \Omega^3(M^7)$. The theory of $G_2$-geometry associates to such a $\varphi$ a unique metric tensor $g_\varphi$. We  consider natural geometric flows of $G_2$-structures $\varphi_t$ which generate flows of the metric geometry $g_{\varphi_t}$.

There is a general theory of flows of $G_2$-structures~\cite{Kar09,DGK25}. To identify specific flows, we propose the principle of dimensional reduction: natural flows of $G_2$-structures should reduce to natural flows of complex geometry under ans\"{a}tze such as $M^7=S^1 \times {\rm CY}3$ or $M^7= T^3 \times {\rm CY}2$, where $\mathrm{CY}n$ denotes a Calabi--Yau $n$-fold. In this article, we consider two flows in complex geometry, the K\"ahler--Ricci flow and the anomaly flow, and study their $G_2$-lifts. The $G_2$-lift of the K\"ahler--Ricci flow is the coflow~\cite{PS}. The $G_2$-lift of the anomaly flow is a flow of conformally coclosed structures~\cite{AMP24}. This statement is slighly imprecise as both $G_2$-flows can be modified and still descend to their corresponding flows of complex geometry; a more precise statement is given in Remark~\ref{lift-modifiedaf}.

As the coflow is better-known in the literature, let us postpone its further discussion and set our attention for now on the $G_2$-lift of the anomaly flow. The anomaly flow was first proposed in~\cite{PPZ1, PPZ2} as a heat flow to geometrize non-K\"ahler Calabi--Yau threefolds by equations of string theory. A $G_2$-version of the anomaly flow was then derived in~\cite{AMP24} by considering the supersymmetric constraints in 10-dimensional supergravity theories of the form $\mathbb{R}^{3,1} \times M^7$. We restrict here to the case when $\alpha'=0$; otherwise the flow involves more complicated nonlinearities such as $\alpha' \, \tr  R^2$, where $R$ is the Riemann curvature tensor, which we set aside for the present analysis.

The $G_2$-anomaly flow at order $\alpha'=0$ is a flow of conformally coclosed $G_2$-structures with conformal factor given by a dilaton function. Let $(f_0, \varphi_0) \in C^\infty(M^7) \times \Omega^3(M^7)$ be initial data where $\varphi_0$ is a positive 3-form satisfying the conformally coclosed condition
\begin{equation*}
d (e^{-2f_0} \psi_0)= 0.
\end{equation*}
Here we use the standard notation $\psi = \star_{g_\varphi} \varphi$. The dilaton coflow, obtained by setting $\alpha'=0$ in the $G_2$-anomaly flow, is given by
\begin{equation} \label{defn-g2-af}
  \partial_t (e^{-2f} \psi) = d \left( e^{2f} d^* (e^{-2f} \psi) \right) - \frac{2}{3} d \left( ( {\rm tr} \, T) \varphi \right),
\end{equation}
where $T$ is the torsion tensor of $\varphi$, and the dilaton function $f$ is defined by
\[
e^{4f} = e^{4 f_0} \frac{{\rm vol}_\varphi}{{\rm vol}_0}.
\]
The only unknown in the flow is the positive 3-form $\varphi(t)$, as this determines the metric $g(t)$, the 4-form $\psi(t)$, the adjoint $d^*_{g(t)}$, the torsion tensor $T(t)$, and the scalar function $f(t)$. Such a flow satisfies $d (e^{-2f(t)} \psi(t))= 0$ for all times of existence, so that this is a flow of conformally coclosed $G_2$-structures.

The flow of $G_2$-structures~\eqref{defn-g2-af} generates a flow of the corresponding metric tensors $g_{\varphi(t)}$. The result is
   \begin{equation} \label{GRF}
     \partial_t g_{ij} = e^{2f} \Big[ - 2 \Ric_{ij} - 4 \nabla_i \nabla_j f + \frac{1}{2} H_{imn} H_{jmn} \Big],
   \end{equation}
   where the 3-form $H \in \Omega^3(M^7)$ associated to $\varphi$ is
   \begin{equation} \label{eq:H}
H = - e^{2f} d^* (e^{-2f} \psi) + \frac{2}{3} \left( ( {\rm tr} \, T) \varphi \right).
   \end{equation}
   Thus we find a Ricci flow which couples to a scalar function $f$ and a 3-form $H$. The right-hand side of~\eqref{GRF} is well-known in string theory: it is the Einstein equation for the bosonic string. We note that there are also other flows in different special geometries which lead to metric evolution equations of the form~\eqref{GRF}, such as the pluriclosed flow~\cite{StreetsTian2012} on complex manifolds and anomaly flow~\cite{Picard2022} on Calabi--Yau threefolds. There is also the renormalization group flow~\cite{OSW2006} in Riemannian geometry which evolves the metric by~\eqref{GRF} without the conformal factor of $e^{2f}$.

   \begin{rmk}
Fixed points of the dilaton coflow~\eqref{defn-g2-af} solve the generalized Ricci soliton equation. This is a triple $(g,H,f)$, where $g$ is a metric tensor, $H$ is a 3-form and $f$ is a scalar function such that
\begin{equation} \label{gen-soliton}
\begin{aligned}
       -2 \Ric - 4 \operatorname{Hess}(f) + \frac{1}{2} H^2 &= 0, \\
       d H &= 0, \\
       d^* (e^{-2f} H) &=0.
\end{aligned}
\end{equation}
In string theory, these equations are derived by imposing Weyl invariance on a nonlinear sigma model; see for example~\cite{CallanFriedanMartinecPerry1985} or~\cite[Equation (3.7.14a, 3.7.14b)]{Polchinski1998}. In the study of geometric flows, these are the equations of the steady solitons of the generalized Ricci flow~\cite{GFS21}. Special solutions to this system can be constructed by complex geometry and pluriclosed metrics via the pluriclosed flow~\cite{StreetsTian2013}, and classification of solutions on compact manifolds of dimension four was carried out in~\cite{Streets2019, StreetsYuri2021, ASU2023} when the solutions come from a generalized K\"ahler structure. We remark that~\eqref{defn-g2-af} is an odd-dimensional flow arriving at the same system of equations; but in contrast to the generalized Ricci flow, the condition $dH=0$ appears in the limit rather than along the flow lines.
  \end{rmk}
 
The theme, then, is to find special solutions to the equation of Riemannian geometry~\eqref{gen-soliton}. We see that when $H=0$ and $f$ is constant then~\eqref{gen-soliton} is the Ricci-flat equation, while when $H=0$ this is the gradient steady soliton equation. The flow~\eqref{defn-g2-af} is a way to produce special solutions to~\eqref{gen-soliton} in dimension seven, just as the Laplacian flow is a way to produce special solutions to the Ricci-flat equation in dimension seven, and as the pluriclosed flow produces special solutions to~\eqref{gen-soliton} in even dimensions. 

Though derived from natural reasoning in string theory, there is a problem: it remains open to prove that the flow~\eqref{defn-g2-af} is well-defined. This situation is reminiscent of the $G_2$-coflow, whose short-time existence in general is also still open. Inspired by Grigorian's~\cite{Gri13} modification of the coflow, we propose here to modify the $G_2$-anomaly flow with a similar additional term, and we are able to prove short-time existence of the modified $G_2$-anomaly flow. 
\begin{thm} \label{thm:STE-intro}
  Let $M^7$ be a compact 7-manifold with initial positive 4-form $\psi_0$ satisfying the conformally coclosed condition $d ( e^{-2f_0} \psi_0)=0$ for some function $f_0$. If $C<1$, then the flow
  \begin{equation} \label{defn-g2-af2}
\partial_t (e^{-2f} \psi) = d \left( e^{2f} d^* (e^{-2f} \psi) \right) + (C-2) d \left( ({\rm tr} \, T) \varphi \right),
  \end{equation}
 where $e^{4f} = e^{4f_0} \frac{\vol_\varphi}{\vol_0}$, admits a unique short-time solution. Note that for all time for which the solution exists, the cohomology class of $e^{-2 f} \psi$ is preserved.
\end{thm}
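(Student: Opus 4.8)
The plan is to recast the modified flow \eqref{defn-g2-af2} as a strictly parabolic evolution equation for the $3$-form $\varphi$, then invoke the standard machinery (DeTurck trick plus linearization) to get short-time existence and uniqueness. First I would note that $f$ is not an independent unknown: the constraint $e^{4f} = e^{4f_0}\,\vol_\varphi/\vol_0$ determines $f$ algebraically in terms of $\varphi$ (pointwise), so the only true unknown is $\varphi$, and all derived quantities $g_\varphi$, $\psi=\star_\varphi\varphi$, $d^*_{g_\varphi}$, $T$, $\operatorname{tr}T$ are smooth functions of $\varphi$ and its first derivatives. Next I would rewrite the left-hand side: since $e^{-2f}\psi$ determines $\psi$ (and hence $\varphi$) via $\psi = e^{2f}(e^{-2f}\psi)$ together with the constraint, one can regard $e^{-2f}\psi$ as the evolving object, or equivalently differentiate $e^{-2f}\psi$ in $t$ to extract $\partial_t\varphi$. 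The key computation is to identify the principal symbol of the right-hand side acting on $\varphi$: the term $d(e^{2f}d^*(e^{-2f}\psi))$ contributes, at leading order, the Hodge Laplacian $-\Delta\psi$ (equivalently, after the identification $\psi\leftrightarrow\varphi$, a second-order operator on $\varphi$), and one must check that the remaining term $(C-2)d((\operatorname{tr}T)\varphi)$ contributes a second-order term whose symbol, added to that of the Laplacian, keeps the total operator parabolic in the direction transverse to the gauge (diffeomorphism) orbit — this is precisely where the hypothesis $C<1$ enters.

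The core technical step is thus the symbol computation. Following the standard decomposition $\Omega^3 = \Omega^3_1\oplus\Omega^3_7\oplus\Omega^3_{27}$ (and correspondingly for $4$-forms), I would compute the linearization of the right-hand side of \eqref{defn-g2-af2} at a fixed $G_2$-structure, discarding lower-order terms, and diagonalize the resulting symbol on each irreducible component. On the $\Omega^3_{27}$ (and the part of $\Omega^3_7$ transverse to the diffeomorphism gauge direction) one expects the symbol to be $-|\xi|^2$ times identity, coming from the Laplacian, which is already the right sign. On the components along which the flow has a gauge degeneracy (the $\Omega^3_7$ piece corresponding to pullback by vector fields, and the conformal $\Omega^3_1$ piece which is constrained away by the dilaton equation), the symbol degenerates, and this is handled by the DeTurck trick: one adds a Lie-derivative term $\mathcal{L}_{W(\varphi)}\varphi$ with $W$ a suitable first-order expression in $\varphi$ (as in the Laplacian flow / coflow literature, e.g. the approach of Bryant--Xu and Grigorian) to render the modified operator strictly elliptic on all of $\Omega^3$. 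The role of $C<1$ is to guarantee that the coefficient multiplying the bad-signed contribution from $(C-2)d((\operatorname{tr}T)\varphi)$ — which after reduction will look like a multiple of $(1-C)|\xi|^2$ on the relevant subspace — has the correct (negative, i.e.\ parabolic) sign; if $C\geq 1$ this term would flip the sign of the symbol on part of $\Omega^3_1$ and destroy parabolicity. I would state the symbol lemma precisely, verify positivity of the relevant eigenvalues under $C<1$, and conclude that the DeTurck-modified flow is strictly parabolic.

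Having reduced to a strictly parabolic quasilinear system on the compact manifold $M^7$, short-time existence and uniqueness of the DeTurck flow follow from standard parabolic theory; then one undoes the DeTurck modification by flowing the solution back along the time-dependent diffeomorphisms generated by $-W$, exactly as in the Ricci-flow and $G_2$-flow literature, to obtain a unique short-time solution of \eqref{defn-g2-af2}. Finally, for the cohomology statement: from \eqref{defn-g2-af2} we have $\partial_t(e^{-2f}\psi) = d\big(e^{2f}d^*(e^{-2f}\psi) + (C-2)(\operatorname{tr}T)\varphi\big)$, which is exact for every $t$; integrating in $t$ shows $[e^{-2f(t)}\psi(t)] = [e^{-2f_0}\psi_0]$ in $H^4_{dR}(M^7)$ for all times of existence, and in particular $d(e^{-2f(t)}\psi(t))=0$ is preserved. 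The main obstacle I anticipate is the symbol computation for the term $(C-2)d((\operatorname{tr}T)\varphi)$: one must carefully track how $\operatorname{tr}T$ linearizes (it involves $d^*$ of the linearized $G_2$-structure, hence contributes at the same second order as the Laplacian term once composed with the outer $d$), and correctly account for the constraint coming from the dilaton equation, since that constraint is what removes the $\Omega^3_1$ degeneracy in a way compatible with — and only with — $C<1$.
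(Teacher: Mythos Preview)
Your overall strategy---DeTurck trick plus symbol computation, with $C<1$ giving positivity---matches the paper's approach, but there are three concrete gaps that would cause the argument to fail as written. First, the paper does not work directly with the conformally coclosed structure $\varphi$; instead it makes the conformal change $\tilde\varphi = e^{-3f/2}\varphi$, so that $\tilde\psi = e^{-2f}\psi$ is genuinely coclosed and the flow becomes a flow of coclosed structures (equation~\eqref{eqn-G2-Anomaly-flow-aux}). This step absorbs the dilaton into a lower-order coefficient and is what lets the Bryant--Xu/Grigorian machinery apply. Second, your claim that after DeTurck the operator becomes ``strictly elliptic on all of $\Omega^3$'' and that ``standard parabolic theory'' applies is incorrect: even after the gauge fix, the linearized operator is only parabolic when restricted to \emph{closed} $4$-forms, and the paper (following Bryant--Xu and Grigorian) must invoke the Nash--Moser implicit function theorem to handle this constrained situation. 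Third, there is a subtlety you do not mention: the operator $P$ depends on a reference volume form and is therefore only equivariant under volume-preserving diffeomorphisms, so the DeTurck vector field must be divergence-free. The paper takes $W(\chi) = 2\,\mathrm{curl}\,X$ and verifies $\div(\mathrm{curl}\,X)=0$ using the coclosed condition; an arbitrary first-order $W$ would not work.

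Finally, your localization of the bad symbol term to $\Omega^3_1$ is off: after DeTurck the residual second-order term is $C\,d((\div X)\varphi)$, where $X$ is the $\Omega^4_7$-component of the variation $\chi$, and the symbol inequality (equation~\eqref{eq:symbol-pos}) is $\|\xi\|^2\|\chi\|^2 - \tfrac{C}{6}\langle\xi\lrcorner\chi,\varphi\rangle^2 \geq (1-C)\|\xi\|^2\|\chi\|^2$, using $\langle\xi\lrcorner\chi,\varphi\rangle^2 \leq 6\|\xi\|^2\|\chi\|^2$. The dilaton constraint plays no role in removing a degeneracy here; its role is purely in the conformal rescaling step above.
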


Note that the dilaton coflow~\eqref{defn-g2-af} corresponds to $C=\frac{4}{3}$ in~\eqref{defn-g2-af2}, which is not covered by our short-time existence result. (Note that we sometimes refer to the dilaton coflow as the $G_2$-anomaly flow in the paper, but we are always taking $\alpha' = 0$ throughout.)

\begin{rmk}
 This result may be compared with the corresponding theory for the $G_2$-coflow. Let us recall the setup. The Laplacian coflow, introduced in~\cite{KMP12} and modified in~\cite{Gri13}, is defined by the following equations:
    \begin{equation} \label{defn-coflow}
\partial_t \psi = d d^* \psi + (C-2) d \left( ({\rm tr} \, T) \varphi \right), \quad d \psi(0)=0.
\end{equation}
It is proved in~\cite{Gri13} that if $C<1$, then the flow admits a unique short-time solution. 
\end{rmk}

The modified flows~\eqref{defn-g2-af2}, for any constant $C$, are all valid $G_2$-lifts of the anomaly flow on Calabi--Yau threefolds with $\alpha'=0$. This is proved in Remark~\ref{lift-modifiedaf}, and the result is that the anomaly flow on ${\rm CY}3$ generates a solution to~\eqref{defn-g2-af2} on $S^1 \times {\rm CY}3$ for any constant $C$. This observation justifies calling~\eqref{defn-g2-af2} the modified $G_2$-anomaly flow.

Next, we analyze the fixed points of the modified flow~\eqref{defn-g2-af2}. We find the following result.

\begin{thm} \label{thm:fixed-points-intro}
    The fixed points $\varphi$ of the modified $G_2$-anomaly flow
    \begin{align}
        \del_t (e^{-2f} \psi) =  d \big( e^{2f} d^* (e^{-2f} \psi) \big) + (C-2) d \big( (\tr T) \varphi \big), \tag{\ref{eqn-G2-Anomaly-flow}}
    \end{align}
    satisfy the following:
    \begin{itemize} \setlength\itemsep{-1mm}
        \item if $C \neq 1$, then
        \begin{align*}
            \tau_0 = K \exp \Big( - \frac{3 (C - \frac{4}{3})}{2 (C-1)} f \Big)
        \end{align*}
        for some constant $K$, where $\tau_0 = \frac{4}{7} ({\rm tr} \, T)$;
        
        \item if $C = \frac{4}{3}$, then $\tau_0$ is constant and $(g,H,f)$ is a generalized Ricci soliton~\eqref{gen-soliton};

        \item if $C = \frac{10}{7}$, then $f$ is constant and $\varphi$ is nearly parallel or torsion-free;

        \item if $C > \frac{10}{7}$, then $f$ is constant and $\varphi$ is torsion-free.
    \end{itemize}
  \end{thm}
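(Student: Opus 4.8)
The plan is to reduce the fixed-point equation to the vanishing of $d\beta$ for one distinguished $3$-form $\beta$, to express $\beta$ through the torsion forms of $\varphi$, and then to read off the four cases from the $G_2$-type decomposition of $d\beta = 0$ together with a single integration by parts. Since $f$ is determined by $\varphi$ through $e^{4f} = e^{4f_0}\vol_\varphi/\vol_0$, a fixed point of \eqref{defn-g2-af2} is exactly a positive $3$-form with $d\beta = 0$, where $\beta := e^{2f}d^*(e^{-2f}\psi) + (C-2)(\tr T)\varphi$. Because the conformally coclosed condition $d(e^{-2f}\psi) = 0$ is preserved, at a fixed point $\tau_2 = 0$ and $\tau_1 = \tfrac12 df$, so that $d\varphi = \tau_0\psi + \tfrac32 df\wedge\varphi + \star\tau_3$ and $d\psi = 2\,df\wedge\psi$. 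Using $d^* = \star d\star$ on $\Omega^4$ and $\tr T = \tfrac74\tau_0$ one computes $A := e^{2f}d^*(e^{-2f}\psi) = \tau_0\varphi - \star(\tau_1\wedge\varphi) + \tau_3$ and hence $\beta = \tfrac{7C-10}{4}\tau_0\varphi - \star(\tau_1\wedge\varphi) + \tau_3$; thus $\beta$ and $A$ share the same $\Omega^3_7$- and $\Omega^3_{27}$-components and differ only by a multiple of $\varphi$ in $\Omega^3_1$.

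For the first two bullets I would project $d\beta = 0$ onto $\Omega^4_7 \cong \Omega^1$. The $\Omega^3_1$-part of $\beta$ contributes $\tfrac{7C-10}{4}\,\pi_7\!\bigl(d(\tau_0\varphi)\bigr)$, which corresponds to $\tfrac{7C-10}{4}\bigl(d\tau_0 + \tfrac32\tau_0\,df\bigr)$; the $\Omega^3_7$- and $\Omega^3_{27}$-parts contribute $\pi_7\!\bigl(d(-\star(\tau_1\wedge\varphi)) + d\tau_3\bigr)$. For any $\gamma$ in $\Omega^3_7$ or $\Omega^3_{27}$ the $\mathbf 7$-summand has multiplicity one in $T^*M \otimes \Omega^3_7$ and in $T^*M \otimes \Omega^3_{27}$, so $\pi_7(d\gamma)$ is a universal multiple of $\pi_7(d^*\gamma)$; and $d^*$ of the two torsion pieces above is evaluated from the Bianchi identity $d^2\varphi = 0$, which gives $d\star\tau_3$ — hence $d^*\tau_3$ — in terms of $d\tau_0$, $df$ and $\tau_3$. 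A bookkeeping of the resulting universal constants shows that the $\tau_3$-contributions cancel and that the remaining terms combine to $2(C-1)\,d\tau_0 + (3C-4)\,\tau_0\,df = 0$. For $C \neq 1$ this is a linear first-order relation, which integrates to $\tau_0 = K\exp\!\bigl(-\tfrac{3(C-4/3)}{2(C-1)}f\bigr)$ for some constant $K$; for $C = \tfrac43$ it forces $d\tau_0 = 0$, so $\tau_0$ is constant. In that case the flow is $\partial_t(e^{-2f}\psi) = -dH$ with $H$ as in \eqref{eq:H}, so $dH = 0$; stationarity of $\varphi$, hence of $g_\varphi$, in \eqref{GRF} yields $-2\Ric - 4\Hess f + \tfrac12 H^2 = 0$; and from $e^{-2f}H = -d^*(e^{-2f}\psi) + \tfrac76 e^{-2f}\tau_0\varphi$, together with $d^*d^* = 0$ and $d^*(e^{-2f}\tau_0\varphi) = -e^{-2f}\star(d\tau_0\wedge\psi) = 0$, one gets $d^*(e^{-2f}H) = 0$, so $(g,H,f)$ solves \eqref{gen-soliton}.

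For the last two bullets I would use one integration by parts: $0 = \langle d\beta, e^{-2f}\psi\rangle_{L^2} = \langle\beta, d^*(e^{-2f}\psi)\rangle_{L^2} = \int_M e^{-2f}\langle\beta, A\rangle\,\vol$. Writing $\beta = A + \tfrac{7(C-2)}{4}\tau_0\varphi$ and using $\langle\varphi, A\rangle = 7\tau_0$ and $|A|^2 = 7\tau_0^2 + |\tau_1\wedge\varphi|^2 + |\tau_3|^2$ one finds $\langle\beta, A\rangle = \tfrac{7(7C-10)}{4}\tau_0^2 + |\tau_1\wedge\varphi|^2 + |\tau_3|^2$. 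When $C \geq \tfrac{10}{7}$ every summand is non-negative, so each vanishes identically: $\tau_1 = 0$ (hence $df = 0$, i.e.\ $f$ is constant) and $\tau_3 = 0$, and also $\tau_0 = 0$ when $C > \tfrac{10}{7}$. Thus for $C > \tfrac{10}{7}$ all torsion forms vanish and $\varphi$ is torsion-free; for $C = \tfrac{10}{7}$ one has $d\varphi = \tau_0\psi$ and $d\psi = 0$, so $0 = d^2\varphi = d\tau_0\wedge\psi$ forces $d\tau_0 = 0$, whence $\tau_0$ is constant and $\varphi$ is nearly parallel (if $\tau_0 \neq 0$) or torsion-free (if $\tau_0 = 0$).

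The main obstacle is the $\Omega^4_7$ computation: fixing the universal $G_2$-constants linking $\pi_7(d\gamma)$ to $\pi_7(d^*\gamma)$ on $\Omega^3_7$ and $\Omega^3_{27}$ and verifying that the $\tau_3$-terms cancel, so that the $\Omega^4_7$-component of $d\beta = 0$ genuinely closes up into the stated first-order ODE for $\tau_0$ (which is also why $C = 1$ must be excluded, the coefficient of $d\tau_0$ then vanishing). Everything else is either a routine $G_2$-representation-theoretic computation — the formula for $\beta$, the Bianchi identity, the norm identities — or the single integration by parts above.
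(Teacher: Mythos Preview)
Your proposal is correct, and the overall architecture—writing the fixed-point equation as $d\beta=0$ for the explicit $3$-form $\beta=\tfrac{7C-10}{4}\tau_0\varphi-\star(\tau_1\wedge\varphi)+\tau_3$ and then projecting—matches the paper's Section~\ref{subsect-fixed-points}. For the first bullet (the $\Omega^4_7$-projection) you and the paper do the same thing; the paper carries out the tensor computation explicitly and invokes the $G_2$-Bianchi identity~\eqref{eqn-G2-Bianchi-conf-coclosed-7}, arriving at exactly your equation $2(C-1)\,d\tau_0+(3C-4)\tau_0\,df=0$. One small caution: your Schur-type claim that ``$\pi_7(d\gamma)$ is a universal multiple of $\pi_7(d^*\gamma)$'' is only guaranteed at the level of principal symbols; lower-order torsion terms are not a priori proportional, so the ``bookkeeping'' you allude to genuinely requires tracking them (this is where the paper's Bianchi identity~\eqref{eqn-G2-Bianchi-conf-coclosed-7} enters). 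This is a phrasing issue rather than a gap, since you acknowledge the torsion corrections.

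Where you genuinely diverge from the paper is in the last two bullets. The paper uses the \emph{pointwise} $\Omega^4_1$-projection~\eqref{eqn-fixed-pt-G2-Anomaly-flow-1}, namely $\tfrac12\Delta f-\tfrac34|\nabla f|^2+\tfrac12\|\tau_3'\|^2+\tfrac{49}{16}(C-\tfrac{10}{7})\tau_0^2=0$, and then runs a maximum-principle argument on $e^{-3f/2}$ to force $f$ constant when $C\ge\tfrac{10}{7}$. Your integration-by-parts route $0=\langle d\beta,e^{-2f}\psi\rangle_{L^2}=\int_M e^{-2f}\langle\beta,A\rangle\,\vol$ is more direct: it produces the integral identity immediately, and positivity of each summand when $C\ge\tfrac{10}{7}$ finishes the argument without any PDE input. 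In fact your integral identity is exactly what one obtains by integrating the paper's pointwise $\Omega^4_1$-equation against the weight $e^{-2f}$ (the $\Delta f$ and $|\nabla f|^2$ terms combine under integration by parts into $\tfrac14\int e^{-2f}|\nabla f|^2$), so the two approaches are consistent—but yours bypasses both the explicit $\Omega^4_1$ tensor computation and the maximum principle. For the generalized Ricci soliton claim at $C=\tfrac43$, your derivation of $d^*(e^{-2f}H)=0$ from $d^*d^*=0$ and constancy of $\tau_0$ is cleaner than the paper's Corollary~\ref{susy2eom}, while the Einstein equation in both cases rests on the metric evolution formula (Theorem~\ref{thm:metric-evol}, i.e.\ equation~\eqref{GRF}).
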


Classification of the fixed points when $C<1$ remains open. When $C> \frac{10}{7}$, the fixed points are torsion-free, with the caveat that this is outside the current short-time existence range. The constant $C=\frac{4}{3}$ distinguished by string theory appears as a special case where the fixed points are not necessarily torsion free, but $\tau_0$ must be constant and the generalized Ricci soliton~\eqref{gen-soliton} equation holds.

\begin{rmk}
    After our paper was first posted to the arXiv in November 2025, the authors were informed of similar results obtained independently by Garcia-Fernandez--Moreno--Payne--Streets in \cite{GFMPS25}. Their results also consider a similar family of flows that evolve conformally coclosed $G_2$-structures with dilaton, however they decouple the dilaton from the $G_2$-structures and augment the lower-order terms in the evolution of the dilaton which force fixed points to be torsion-free for particular choices of certain parameters.
\end{rmk}

The paper is organized as follows. Our calculations and derivations take place in the setting of the modified $G_2$-anomaly flow, and since this is a flow of conformally coclosed $G_2$-structures, along the way we occasionally set the conformal factor to be constant and extract statements for the modified $G_2$-coflow. Section~\ref{sect-lifting} surveys various results on $G_2$-lifts of well-known geometric flows, and establishes the motivation for our analysis on coflows and conformally coclosed flows. Section~\ref{sect-curvature} derives various identities and an expression for the Ricci curvature of a conformally coclosed $G_2$-structure. Section~\ref{sect-modified-G2-Anomaly-flows} examines the fixed points and the metric evolution for the $G_2$-anomaly flow, and Section~\ref{sect-ste} is concerned with short-time existence and uniqueness. Finally, in Section~\ref{sect-summary}, we summarize and compare our analysis of fixed points and short-time existence with what is known for the modified $G_2$-coflow~\cite{FF25}.

\subsubsection*{Notation and conventions}

Any local computations are done with respect to a local orthonormal frame. As such, all indices are subscripts and the Einstein summation convention sums over repeated subscripts.

We summarize here our conventions for $G_2$-geometry, which follow those of~\cite{Kar09, DGK25}. Let $M^7$ be an oriented 7-manifold. We say $\varphi \in \Omega^3(M^7)$ is a $G_2$-structure if it determines a metric tensor $g_\varphi$ via the formula
\begin{equation*}
( X \hook \varphi) \wedge (Y \hook \varphi) \wedge \varphi = -6 g_\varphi(X,Y) \vol_{g_\varphi}.
\end{equation*}
The metric $g_\varphi$ determines a Hodge star $\star_\varphi$, and we use the notation
\begin{equation*}
\psi = \star_\varphi \varphi, \quad \psi \in \Omega^4(M^7).
\end{equation*}
We say a $G_2$-structure is closed if $d \varphi = 0$ and coclosed if $d \psi = 0$. 

Differential forms on a manifold equipped with a $G_2$-structure admit a type decomposition:
\begin{align*}
  \Omega^2 & = \Omega^2_{27} \oplus \Omega^2_{14}, \\
  \Omega^3 &= \Omega_1^3 \oplus \Omega^3_7 \oplus \Omega^3_{27},
\end{align*}
and decompositions for $\Omega^4$ and $\Omega^5$ are obtained by the Hodge star. The decompositions are given by
\begin{align*}
  \Omega^2_7 &= \{ V \hook \varphi : V \in \Gamma(TM) \}, \\
  \Omega^2_{14} &= \{ \beta \in \Omega^2 : \beta \wedge \psi = 0 \}, \\
  \Omega^3_1 &= \{ f \varphi : f \in C^\infty(M) \}, \\
  \Omega^3_7 &= \{ V \hook \psi : V \in \Gamma(TM) \}, \\
  \Omega^3_{27} &= \{ \gamma \in \Omega^3 \: \gamma \wedge \varphi = \gamma \wedge \psi = 0 \}.
\end{align*}

The above decomposition of $2$-forms in turn allows us to decompose the entire space $\cal{T}^2$ of $2$-tensors as
\begin{align*}
    \cal{T}^2 = C^\infty g \oplus \cal{S}^2_0 \oplus \Omega^2_7 \oplus \Omega^2_{14},
\end{align*}
where $\cal{S}^2_0$ is the space of traceless symmetric $2$-tensors. We use the subscripts $1$, $27$, $7$, and $14$ to refer to the projection of a $2$-tensor onto each of these four respective spaces.

Contracting a $2$-tensor $A$ with $\varphi$ using the metric tensor by
\begin{align*}
    (A \diamond \varphi)_{ijk} = A_{im} \varphi_{mjk} - A_{jm} \varphi_{mik} + A_{km} \varphi_{mij}
\end{align*}
defines a linear map $\diamond \, \varphi \: \cal{T}^2 \rarr \Omega^3$. In particular, the kernel of this map is the space $\Omega^2_{14}$ and the $\diamond$ map is an isomorphism between
\begin{align*}
    C^\infty g \cong \Omega^3_1, \quad \Omega^2_7 \cong \Omega^3_7, \quad \cal{S}^2_0 \cong \Omega^3_{27}.
\end{align*}
One can similarly define a contraction map $\diamond \, \psi$ which acts on a $2$-tensor $A$ by
\begin{align*}
    (A \diamond \psi)_{ijkl} = A_{im} \psi_{mjkl} - A_{jm} \psi_{mikl} + A_{km} \psi_{mijl} - A_{lm} \psi_{mijk}.
\end{align*}
As is the case for $3$-forms, $\Omega^2_{14}$ is the kernel of this map, and we have isomorphisms
\begin{align*}
    C^\infty g \cong \Omega^4_1, \quad \Omega^2_7 \cong \Omega^4_7, \quad \cal{S}^2_0 \cong \Omega^4_{27}.
\end{align*}
We also record the useful identity
\begin{equation} \label{eq:27-hook}
(X \hook \varphi) \diamond \varphi = - 3 X \hook \psi.
\end{equation}

The \emph{torsion} of a $G_2$-structure is a 2-tensor $T$ defined by
\begin{equation*}
\nabla_m \varphi_{ijk} = T_{mp} \psi_{pijk}.
\end{equation*}
The torsion of a $G_2$-structure is also described by the torsion forms: a function $\tau_0$, a 1-form $\tau_1$, a $\Omega^2_{14}$-form $\tau_2$ and a $\Omega^3_{27}$-form
$$ \tau_3 = \frac{1}{2} (\tau'_3)_{im} \varphi_{mjk} \, e_i \wedge e_j \wedge e_k = \tau_3' \diamond \varphi $$
for some $\tau_3' \in \cal{S}^2_0$ such that
\begin{equation} \label{eq:torsion-forms}
\begin{aligned}
d \varphi & = \tau_0 \psi + 3 \tau_1 \wedge \varphi + \star \tau_3, \\
d \psi & = 4 \tau_1 \wedge \psi + \star \tau_2,
\end{aligned}
\end{equation}
The torsion tensor $T$ and the torsion forms $\tau_i$ are related (see~\cite{Kar09}) by
\begin{equation} \label{eq:torsion-relations}
T_{ij} = \frac{\tau_0}{4} g_{ij} - (\tau'_3)_{ij} + (\tau_1)_p \varphi_{pij} - \frac{1}{2} (\tau_2)_{ij}.
\end{equation}

\subsubsection*{Acknowledgments}
We thank the CRM in Montr\'eal for supporting our residency during the Thematic Program in Geometric Analysis in 2024. We also thank Weiyong He, Jock McOrist, and Eirik Svanes for useful discussions. The research of SK and SP is partially supported by NSERC Discovery Grants.

\section{Lifting flows of complex geometry} \label{sect-lifting}

In this section we explore how flows of complex geometry generate flows of $G_2$-structures. Table~\ref{table:lifts} summarizes the results.

\begin{table}[H]
\centering
\begin{tabular}{|l|l|}
\hline
\textbf{Complex Geometric Flow} & \textbf{\(G_2\)-Flow} \\
\hline
Kähler--Ricci flow               & \( G_2 \)-Laplacian coflow \\
\( {\rm MA}^{1/3} \) flow             & \( G_2 \)-Laplacian flow \\
Anomaly flow ($\alpha'=0$)                   & Conformally coclosed flow \\
\hline
\end{tabular}
\caption{Examples of flows in complex geometry and their \( G_2 \)-lifts. The complex counterpart to the \( G_2 \)-Laplacian flow and coflow is derived in~\cite{PS}, and we derive the complex counterpart to the conformally coclosed flow in Proposition~\ref{lift-af}.}  \label{table:lifts}
\end{table}

\subsection{K\"ahler--Ricci flow}
Our point of departure is the most well-known flow in complex geometry, the K\"ahler--Ricci flow, which is known to lift to the Laplacian coflow in $G_2$-geometry.

 \begin{propn} \label{lift-krf}~\cite{PS}
   Let $M^7= S^1 \times X^6$, where $X^6$ is a compact complex threefold admitting a holomorphic volume form $\Omega$ and a K\"ahler metric $\omega_0$ such that $d \omega_0=0$. Start the K\"ahler--Ricci flow
   \[
{\del_t \tilde{\omega}_t} = - 2 {\rm Ric}(\tilde{\omega}_t)
   \]
   with initial data $\tilde{\omega}_{t=0} = \omega_0$. Then if we let
    \begin{equation} \label{eq:coflow}
\psi_t = - dr \wedge {\rm Im} ( \Omega_t ) - \frac{1}{2} \omega_t^2, \quad (\omega_t, \Omega_t) = (\Theta_t^* \tilde{\omega}_t, \Theta_t^* \Omega),
    \end{equation}
 where $r$ denotes the angle coordinate on the $S^1$, the evolving family of $G_2$-structures $(M^7,\psi_t)$ solves the Laplacian coflow
    \begin{equation} \label{eq:coflow2}
{\del_t \psi_t} =d d^* \psi_t.
\end{equation}
Here $\Theta_t$ is a family of diffeomorphisms determined by $\Theta_0 = id$ and ${\del_t} \Theta_t = \nabla_{\tilde{g}_t} \log |\Omega|_{\tilde{\omega}_t}$.
  \end{propn}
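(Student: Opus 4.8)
The plan is to compute directly that the family $\psi_t$ defined by the ansatz solves $\partial_t \psi_t = d d^* \psi_t$, using the structure of the $S^1 \times X^6$ product and the fact that $\tilde\omega_t$ solves Kähler--Ricci flow. First I would set up the dictionary between the $G_2$-data on $M^7 = S^1 \times X^6$ and the SU(3)-data on $X^6$: with coordinate $r$ on $S^1$, the $G_2$-structure is $\varphi_t = dr \wedge \omega_t + \mathrm{Re}(\Omega_t)$, the metric is $g_{\varphi_t} = dr^2 + g_{\omega_t}$, and $\psi_t = \star_{\varphi_t}\varphi_t = -dr\wedge \mathrm{Im}(\Omega_t) - \tfrac12 \omega_t^2$, which is exactly the stated formula. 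I would record that because $\omega_0$ is Kähler and $X^6$ is Calabi--Yau, the initial structure is coclosed ($d\psi_0 = 0$), and that along Kähler--Ricci flow the metrics $\tilde\omega_t$ stay Kähler, so $d(\tfrac12\omega_t^2) = 0$ and the only non-closed piece of $\psi_t$ is the $dr\wedge\mathrm{Im}(\Omega_t)$ term (up to the pullback by $\Theta_t$). Note $\Omega$ itself is fixed; it is $|\Omega|_{\tilde\omega_t}$ that varies, and the diffeomorphisms $\Theta_t$ are introduced precisely to absorb this.

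Next I would compute both sides. For the left-hand side, $\partial_t \psi_t$ has a term from $\partial_t\omega_t$ — governed by Kähler--Ricci flow and the diffeomorphism, hence by $-2\,\mathrm{Ric}(\omega_t)$ plus a Lie derivative along $\nabla\log|\Omega|_{\omega_t}$ — and a term from $\partial_t\mathrm{Im}(\Omega_t)$, which since $\Omega_t = \Theta_t^*\Omega$ is purely a Lie derivative $\mathcal{L}_{V_t}\mathrm{Im}(\Omega_t)$ with $V_t = \nabla_{g_t}\log|\Omega|_{\omega_t}$. For the right-hand side, I would use $d^*\psi_t = -\star d \star \psi_t = -\star d\varphi_t$, and compute $d\varphi_t = dr\wedge d\omega_t + d\,\mathrm{Re}(\Omega_t) = d\,\mathrm{Re}(\Omega_t)$ using $d\omega_t = 0$; the exterior derivative of $\mathrm{Re}(\Omega_t)$ is controlled by the Lee-form / the derivative of the function $\log|\Omega|_{\omega_t}$. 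One then applies $d$ again to obtain $dd^*\psi_t$ in terms of second-order operators on the Kähler data. The key computational identities are: on a Kähler manifold with holomorphic volume form, $d\Omega$ is determined by $\partial \log|\Omega|_\omega$; the Ricci form of a Kähler metric is $\rho = -i\partial\bar\partial\log\det g = -i\partial\bar\partial \log|\Omega|_\omega^{-2}$ up to normalization; and the relation between $\mathcal{L}_V$ of the SU(3)-forms and the Hodge Laplacian terms that arise. The Lie-derivative (diffeomorphism) contributions on the two sides are designed to cancel, leaving the genuine PDE content.

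The main obstacle, I expect, is bookkeeping the conformal/non-Ricci-flat discrepancy: $X^6$ carries a holomorphic volume form $\Omega$ but $\tilde\omega_t$ need not be Ricci-flat (indeed Kähler--Ricci flow moves it), so $|\Omega|_{\tilde\omega_t}$ is a genuinely time- and space-dependent function, and the evolving $G_2$-structure is only \emph{coclosed}, not torsion-free — the class of $\psi_t$ is fixed but $\psi_t$ itself is not closed. Tracking how $d\star\psi_t = d\varphi_t$ encodes $\partial\log|\Omega|_{\omega_t}$, and then matching $d$ of that against the Kähler--Ricci evolution $-2\rho(\omega_t) = 2i\partial\bar\partial\log|\Omega|^{-1}_{\omega_t}$ together with the $\partial_t\Omega_t$ term, is where all the constants ($-1$, $\tfrac12$, factors of $2$ and $3$ in the $G_2$ identities such as \eqref{eq:27-hook}) must be wrangled. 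A clean way to organize this is to first prove the identity at $t=0$ in a gauge where $\Theta_0 = \mathrm{id}$, i.e.\ to show $dd^*\psi_0 = -dr\wedge \partial_t\mathrm{Im}(\Omega_t)|_0 - \tfrac12\partial_t(\omega_t^2)|_0$ directly from Kähler geometry, and then note that the diffeomorphism family $\Theta_t$ propagates this for all $t$ because both sides of \eqref{eq:coflow2} are equivariant under diffeomorphisms and the defining ODE $\partial_t\Theta_t = \nabla_{\tilde g_t}\log|\Omega|_{\tilde\omega_t}$ is exactly the one that intertwines the pulled-back flow with the coflow. The residual work is then the pointwise Kähler computation, which I would do in a unitary frame adapted to $\Omega$.
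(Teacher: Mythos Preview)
Your overall strategy---compute $\partial_t\psi_t$ and $dd^*\psi_t$ separately and compare---is the same as the paper's, but there is a genuine error in your dictionary that would derail the computation. You write $\varphi_t = dr\wedge\omega_t + \mathrm{Re}(\Omega_t)$ and $g_{\varphi_t} = dr^2 + g_{\omega_t}$, but these formulas are only valid when $|\Omega|_{\omega_t}$ is constant (i.e.\ when the $SU(3)$-structure is normalized). Along K\"ahler--Ricci flow this fails: the paper notes that the induced metric is $g_t = \tfrac{1}{8}|\Omega|^2\,dr^2 + (g_X)_t$, and correspondingly $\varphi_t = \star_{\psi_t}\psi_t$ acquires explicit $|\Omega|_{\omega_t}$ factors. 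With your (incorrect) formula for $\varphi_t$ you would get $d\varphi_t = dr\wedge d\omega_t + d\,\mathrm{Re}(\Omega_t) = 0$, since $\omega_t$ is K\"ahler and $\Omega_t = \Theta_t^*\Omega$ is genuinely closed; this would give $dd^*\psi_t = 0$, which is false. The nonvanishing of $d\varphi_t$ comes entirely from derivatives of $|\Omega|_{\omega_t}$ in the Hodge star, not from $d\,\mathrm{Re}(\Omega_t)$ as you suggest. (Relatedly, you say at one point that ``$\psi_t$ itself is not closed''---it is; what fails is closedness of $\varphi_t$.)

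The paper avoids this bookkeeping entirely by a slicker route: rather than computing $dd^*\psi_t$ from scratch, it first shows that the pulled-back flow satisfies $\partial_t\omega_t = -\mathcal{L}_V\omega_t$ and $\partial_t\Omega_t = \mathcal{L}_V\Omega_t$ with $V = \nabla\log|\Omega_t|_{\omega_t}$, using the K\"ahler identity $\mathrm{Ric}(\omega) = \mathcal{L}_{\nabla\log|\Omega|_\omega}\omega$. Hence $\partial_t\psi_t$ is a single Lie derivative of $-dr\wedge\mathrm{Im}(\Omega_t) + \tfrac{1}{2}\omega_t^2$. The paper then cites~\cite{PS} for the identity $dd^*\psi = \mathcal{L}_V(-dr\wedge\mathrm{Im}(\Omega) + \tfrac{1}{2}\omega^2)$, which matches. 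If you want to carry out your direct computation, you must first correct the $\varphi_t$ and metric formulas to include the $|\Omega|_{\omega_t}$ dependence; the cleanest organizing principle is that both sides end up as $\mathcal{L}_{\nabla\log|\Omega|}$ of the same 4-form.
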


  \begin{proof}
    This is a result from~\cite{PS}, and the full calculation can be found there (albeit with different coefficients). We give the outline. First, compute the derivative of $\omega_t = \Theta_t^* \tilde{\omega}_t$. We have
    \begin{align*}
      {\del_t \omega_t} &= {\del_t} \Theta_t^* \tilde{\omega}_t \\
                            &= \Theta_t^* \mathcal{L}_{\nabla_{\tilde{g}_t} \log |\Omega|_{\tilde{\omega}_t}} \tilde{\omega}_t + \Theta_t^* (- 2 {\rm Ric}(\tilde{\omega}_t)) \\
      &= (1-2) \mathcal{L}_{\nabla \log |\Omega_t|_{\omega_t}} \omega_t
    \end{align*}
    by the K\"ahler identity ${\rm Ric}(\omega) = \mathcal{L}_{\nabla \log |\Omega|_\omega} \omega$. Thus the ansatz is setup such that
    \begin{equation*}
( {\del_t \omega_t}, {\del_t \Omega_t} ) = (- \mathcal{L}_{\nabla \log |\Omega_t|_{\omega_t}} \omega_t, \mathcal{L}_{\nabla \log |\Omega_t|_{\omega_t}} \Omega_t),
\end{equation*}
 and hence, using that the induced metric is $g_t = \frac{1}{8} |\Omega|^2 dr^2 + (\tilde g_X)_t$ from~\cite[equation (2.20)]{PS}, we get
    \[
\del_t \left( - dr \wedge {\rm Im} (\Omega_t) - {\frac{1}{2}} \omega_t^2 \right) = \mathcal{L}_{\nabla \log |\Omega_t|_{\omega_t}} \left( - dr \wedge {\rm Im} (\Omega_t) + {\frac{1}{2}} \omega_t^2 \right).
    \]
    On the other hand, the Laplacian is computed in~\cite{PS} and the result is
    \[
d^* d \left( - dr \wedge {\rm Im} ( \Omega ) - \frac{1}{2} \omega^2 \right) =  \mathcal{L}_{\nabla \log |\Omega|_\omega} \left( - dr \wedge {\rm Im} (\Omega) + {\frac{1}{2}} \omega^2 \right).
    \]
    We see that ansatz~\eqref{eq:coflow} generates a solution to the $G_2$-coflow~\eqref{eq:coflow2}.
  \end{proof}
 
  \begin{rmk}
  Proposition~\ref{lift-krf} also holds for the modified coflow
    \[
\partial_t \psi = d d^* \psi + (C-2) d \Big( ({\rm tr} \, T) \varphi \Big), \quad d \psi(0) = 0
    \]
    where $C$ is arbitrary. This is because ${\rm tr} \, T = 0$ for ansatz~\eqref{eq:coflow}. See also~\cite{EarpSaavedraSuan2025} for further extensions.
  \end{rmk}
  
  \begin{rmk}
    The K\"ahler--Ricci flow on $X^6$ does not generate solutions to the $G_2$-Laplacian flow on $S^1 \times X^6$; instead, the correct flow in that setup is the ${\rm MA}^{1/3}$-flow. We refer to~\cite{PS} for further details on deriving Monge--Amp\`ere flows from the $G_2$-Laplacian flow. 
  \end{rmk}

\subsection{Anomaly flow}

The next flow of complex geometry that we will consider is anomaly flow with $\alpha'=0$ on a Calabi--Yau threefold; we refer to~\cite{PPZ1, PPZ2} for background on the anomaly flow. In this section we lift this Hermitian flow to a flow of $G_2$-geometry. The lifted equation is a flow of conformally coclosed $G_2$-structures which resembles the $G_2$-Laplacian coflow. 

First, recall the anomaly flow with $\alpha'=0$ on a Calabi--Yau threefold. This is a flow of conformally coclosed structures in 6-dimensions. Let $X^6$ be a complex threefold admitting a holomorphic volume form $\Omega \in \Omega^3(X^6,\mathbb{C})$. The flow equations for a hermitian metric $\omega$ are:
\[
\partial_t ( e^{-2f} \star \omega) = 2 i \partial \bar{\partial} \omega
\]
where the dilaton function is defined by
\[
  e^{-2f}= \frac{i \Omega \wedge \bar{\Omega}}{\frac{1}{3!} \omega^3}.
\]
The holomorphic volume form $\Omega$ is held fixed while the hermitian form $\omega$ evolves. If the initial condition $\omega_0$ is conformally coclosed, then
\[
d ( e^{-2f} \star \omega) = 0
\]
along the flow. As proved in~\cite{PPZ1, PPZ3}, this flow always admits a short-time solution. Using the common notation $e^{-2f} = |\Omega|_\omega$ for the norm of a holomorphic $(3,0)$-form, the flow can also be rewritten as
\[
\partial_t ( |\Omega|_{\omega_t} \, \omega_t^2) = 4 i \partial \bar{\partial} \omega_t, \quad d(|\Omega|_{\omega_0} \, \omega_0^2) = 0.
\]
This conformally coclosed flow in dimension six can be lifted to a conformally coclosed flow in dimension seven.

\begin{propn} \label{lift-af}
   Let $M^7= S^1 \times X^6$, where $X^6$ is a compact complex threefold admitting a holomorphic volume form $\Omega$ and a hermitian metric $\omega_0$ such that $d (|\Omega|_{\omega_0} \, \omega_0^2) = 0$. Start the $\alpha'=0$ anomaly flow, which flows the hermitian metrics $\omega_t$ by
   \[
\partial_t ( |\Omega|_{\omega_t} \, \omega_t^2) = 4 i \partial \bar{\partial} \omega_t
   \]
 with initial data $\omega_0$. Then if we let
   \begin{equation} \label{af:ansatz}
\psi = - dr \wedge \frac{1}{|\Omega|_{\omega_t}} {\rm Im} \, \Omega  - \frac{1}{2} \omega_t^2,
    \end{equation}
    where $r$ denotes the angle coordinate on the $S^1$, the evolving family of $G_2$-structures $(M^7,\psi)$ solves the conformally coclosed flow
    \begin{equation} \label{eq:confcoflow}
\partial_t ( e^{-2f} \psi) = d \left( e^{2f} d^* ( e^{-2f} \psi) \right), \quad e^{4 f} = \frac{{\rm vol}_\psi}{{\rm vol}_R},
\end{equation}
where the fixed reference volume ${\rm vol}_R \in \Omega^7(M^7)$ is chosen to be ${\rm vol}_R = dr \wedge (i \Omega \wedge \bar{\Omega})$.
\end{propn}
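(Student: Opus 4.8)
The plan is to follow the template of the proof of Proposition~\ref{lift-krf}: evaluate both sides of~\eqref{eq:confcoflow} on the ansatz~\eqref{af:ansatz} and check they agree. The first task is to read off the $G_2$-data of the ansatz. An algebraic computation, of the type carried out in~\cite{PS}, should show that $\psi$ in~\eqref{af:ansatz} is positive, with dual $3$-form $\varphi = e^{2f}\,dr\wedge\omega_t + {\rm Re}\,\Omega$ (up to orientation) and induced metric the warped product $g_\psi = e^{4f}\,dr^2 + g_{\omega_t}$, where $g_{\omega_t}$ is the Hermitian metric of $\omega_t$ on $X^6$. The warping factor is forced by matching volume forms: ${\rm vol}_\psi = e^{2f}\,dr\wedge\tfrac16\omega_t^3$ while ${\rm vol}_R = dr\wedge(i\Omega\wedge\bar\Omega) = dr\wedge|\Omega|_{\omega_t}\,\tfrac16\omega_t^3$, so the relation $e^{4f}={\rm vol}_\psi/{\rm vol}_R$ yields $e^{-2f}=|\Omega|_{\omega_t}$, that is, the seven-dimensional dilaton coincides with the six-dimensional one. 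Consequently the rescaled $4$-form takes the clean form
\[ e^{-2f}\psi = -dr\wedge{\rm Im}\,\Omega - e^{-2f}\star_{g_{\omega_t}}\omega_t = -dr\wedge{\rm Im}\,\Omega - \tfrac12\,|\Omega|_{\omega_t}\,\omega_t^2. \]

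The left-hand side is then immediate: since $\Omega$ is held fixed, $dr\wedge{\rm Im}\,\Omega$ is time-independent, and the $\alpha'=0$ anomaly flow $\partial_t(|\Omega|_{\omega_t}\omega_t^2)=4i\partial\bar\partial\omega_t$ gives
\[ \partial_t(e^{-2f}\psi) = -\tfrac12\,\partial_t\big(|\Omega|_{\omega_t}\,\omega_t^2\big) = -2i\,\partial\bar\partial\omega_t, \]
which is exact; this already establishes that $d(e^{-2f}\psi)=0$ is preserved, the last assertion of the proposition.

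The substance of the proof is the right-hand side $d\big(e^{2f}d^*_{g_\psi}(e^{-2f}\psi)\big)$. Using $\star_{g_\psi}(e^{-2f}\psi) = e^{-2f}\varphi$ and $d^*=\pm\,\star_{g_\psi}d\,\star_{g_\psi}$, one reduces to computing $d(e^{-2f}\varphi)$; with $e^{-2f}\varphi = dr\wedge\omega_t + |\Omega|_{\omega_t}{\rm Re}\,\Omega$ and $d\Omega=0$, this is a combination of $d|\Omega|_{\omega_t}\wedge{\rm Re}\,\Omega$ and $dr\wedge d\omega_t$, all exterior derivatives being taken along $X^6$. One then applies $\star_{g_\psi}$ --- which on the warped product splits into $\star_{g_{\omega_t}}$ on $X^6$ together with a single $dr$-factor carrying the appropriate power of $e^{2f}$ --- multiplies by $e^{2f}$, and uses the cancellation $e^{2f}|\Omega|_{\omega_t}=1$ with the Hermitian/Calabi--Yau star identities $\star_{g_{\omega_t}}\omega_t=\tfrac12\omega_t^2$ and $\star_{g_{\omega_t}}{\rm Im}\,\Omega=-{\rm Re}\,\Omega$. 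The seven-dimensional quantity $e^{2f}d^*_{g_\psi}(e^{-2f}\psi)$ then collapses into a $3$-form assembled from $\star_{g_{\omega_t}}d\omega_t$ and $dr\wedge\star_{g_{\omega_t}}\!\big(d|\Omega|_{\omega_t}\wedge{\rm Re}\,\Omega\big)$. Applying $d$ once more, the $dr$-terms should cancel, leaving a $4$-form on $X^6$ that must be shown to equal $-2i\partial\bar\partial\omega_t$, matching the left-hand side. This last identity is the purely six-dimensional statement that the anomaly flow $\partial_t(e^{-2f}\star_{g_{\omega_t}}\omega_t)=2i\partial\bar\partial\omega_t$ can be rewritten in conformally coclosed form, $\partial_t(e^{-2f}\star_{g_{\omega_t}}\omega_t)=d\big(e^{2f}d^*_{g_{\omega_t}}(e^{-2f}\star_{g_{\omega_t}}\omega_t)\big)$, along conformally balanced metrics; it is the six-dimensional analog of the Laplacian computation quoted from~\cite{PS} in the proof of Proposition~\ref{lift-krf}, and follows from the conformally balanced relation $d(|\Omega|_{\omega_t}\omega_t^2)=0$ together with Hermitian torsion identities (cf.~\cite{PPZ1, PPZ3}).

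The delicate point is precisely this last step: carrying $d^*_{g_\psi}$ through the warped product $\big(S^1\times X^6,\ e^{4f}dr^2 + g_{\omega_t}\big)$ when the warping factor $e^{4f}=|\Omega|_{\omega_t}^{-2}$ is a nonconstant function on $X^6$ that interacts with both $\star$ and $d$ --- so $d^*$ picks up extra terms proportional to $d|\Omega|_{\omega_t}$ --- and then, after applying $d$, checking both that no $dr$-component survives and that the remaining $X^6$-component equals $-2i\partial\bar\partial\omega_t$. Pinning down the correct powers of $|\Omega|_{\omega_t}$ (in $\varphi$ and in the warping) and the numerical constants, so that every cross-term cancels and the conformally balanced identity can be invoked, is where the care lies; once the seven-dimensional operator has been reduced to its six-dimensional shadow, the remaining identity is the one that realizes the $\alpha'=0$ anomaly flow as a six-dimensional conformally coclosed flow.
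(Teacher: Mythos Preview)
Your overall strategy is right --- compute both sides on the ansatz and reduce to a six-dimensional identity --- and your left-hand side computation is correct. But your identification of the $G_2$-data is wrong, and this sends the rest of the sketch down the wrong path.

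The induced metric is \emph{not} the warped product $e^{4f}\,dr^2 + g_{\omega_t}$. A direct computation (as the paper carries out) shows that the $3$-form dual to $\psi$ is $\varphi = 2\sqrt{2}\,\Re\!\big(|\Omega|_{\omega_t}^{-1}\Omega\big) - \tfrac{1}{2\sqrt{2}}\,dr\wedge\omega_t$, with the factor $|\Omega|_{\omega_t}^{-1}$ sitting on the $\Omega$-piece rather than on $dr\wedge\omega_t$, and the metric is the \emph{direct} product $g_\varphi = \tfrac{1}{8}\,dr^2 + g_{\omega_t}$ with constant coefficient on $dr^2$. (Your own volume check is inconsistent: from $\vol_\psi = e^{2f}\,dr\wedge\tfrac{1}{6}\omega_t^3$ and $\vol_R = |\Omega|_{\omega_t}^2\,dr\wedge\tfrac{1}{6}\omega_t^3$ the relation $e^{4f}=\vol_\psi/\vol_R$ would give $e^{-2f}=|\Omega|_{\omega_t}^2$, not $|\Omega|_{\omega_t}$.) Because the metric is a direct product, the seven-dimensional Hodge star splits cleanly as $\star_\varphi\beta = \pm\tfrac{1}{2\sqrt{2}}\,dr\wedge\star_6\beta$ and $\star_\varphi(dr\wedge\beta) = 2\sqrt{2}\,\star_6\beta$, and the ``delicate point'' you anticipate --- tracking $d^*$ through a nonconstant warping with extra $d|\Omega|_{\omega_t}$ terms --- simply does not arise.

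With the correct star formulas, the right-hand side reduces directly to $d\star_6\!\big(d(\log|\Omega|_{\omega_t})\wedge\omega_t\big) + d\star_6 d\omega_t$ on $X^6$. The genuine work, which you defer to references, is then to show this equals $-2i\partial\bar\partial\omega_t$. The paper does this explicitly: write $d\omega_t = \mu\wedge\omega_t + (d\omega_t)_0$ via the Lee form, use the primitive-form Hodge star identities to get $d\star_6 d\omega_t = -2d\big((J\mu)\wedge\omega_t\big) - 2i\partial\bar\partial\omega_t$, and use the conformally balanced condition $d(|\Omega|_{\omega_t}\omega_t^2)=0$ to get $d(\log|\Omega|_{\omega_t}) = -2\mu$, after which the Lee-form terms cancel. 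This cancellation is the heart of the proof and should be made explicit rather than cited.
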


  \begin{proof}

We start by deriving identities for the geometry generated by a $G_2$-structure $\varphi$ on $M = S^1 \times X^6$ of the form
\begin{equation*}
    \varphi = 2 \sqrt{2} \Re \Big( \frac{1}{ |\Omega |_\omega} \Omega \Big) - \frac{1}{2 \sqrt{2}} dr \wedge \omega,
\end{equation*}
where $r$ is the angle coordinate on the $S^1$-component. (The coefficients on the terms are to achieve a nicer expression for the dual $4$-form $\psi$.) Our conventions on $(X^6,g,\omega,\Omega)$ are:
\begin{equation*}
\vol_6 = \frac{\omega^3}{3!} = \frac{i}{|\Omega|^2_\omega} \Omega \wedge \bar{\Omega}.
\end{equation*}
The $G_2$-structure $\varphi$ above creates a metric tensor $g_\varphi$, a volume form $\vol_\varphi$, a Hodge star $\star_\varphi$ and a dual $4$-form $\psi$. Indeed, the metric tensor and volume form are given by
\begin{align*}
  g_\varphi &= \frac{1}{8} dr^2 + g_6, \\
    \vol_\varphi &= \frac{1}{2 \sqrt{2}} dr \wedge \vol_6,
\end{align*}
and we can compute that for $\beta \in \Omega^k(X^6)$, we have
\begin{align*}
    \star_\varphi \beta &= (-1)^k \frac{1}{2 \sqrt{2}} dr \wedge \star_6 \beta, \\
    \star_\varphi (dr \wedge \beta) &= 2 \sqrt{2} \star_6 \beta.
\end{align*}
Thus $\psi = \star_\varphi \varphi$ gives
\begin{equation*}
    \psi = - dr \wedge \Im \Big( \frac{1}{ |\Omega |_\omega} \Omega \Big) - \frac{1}{2} \omega^2.
  \end{equation*}
By~\eqref{eq:confcoflow}, the conformal factor $f$ for $\varphi$ is defined to be
\begin{equation*}
  f = \frac{1}{4} \log \frac{\vol_\varphi}{\vol_R} = - \frac{1}{2} \log |\Omega|_{\omega},
\end{equation*}
which gives $|\Omega|_{\omega} = e^{-2 f}$, so the conformally balanced equation $d (|\Omega|_{\omega} \omega^2)= 0$ on $X^6$ becomes $d (e^{-2f} \psi) = 0$ on $M^7$.

With this setup in place, we can now substitute ansatz~\eqref{af:ansatz} into the flow equation~\eqref{eq:confcoflow}. The left-hand side of~\eqref{eq:confcoflow} is
\begin{equation}
    \del_t (e^{-2f} \psi) = \del_t \Big( -dr \wedge \Im \Omega - \frac{1}{2} |\Omega |_\omega \omega^2 \Big) = - \frac{1}{2} \del_t ( |\Omega |_\omega \omega^2).
\end{equation}
The right-hand side of~\eqref{eq:confcoflow} is
\begin{equation}
\begin{aligned}
    d (e^{2f} d^*_\varphi (e^{-2f} \psi)) &= d \Big( e^{2f} \star_\varphi d \star_\varphi \Big( -dr \wedge \Im \Omega - \frac{1}{2} |\Omega|_\omega \omega^2 \Big) \Big) \\
    &= d \Big( e^{2f} \star_\varphi d \Big( 2 \sqrt{2} \Re \Omega - \frac{1}{2 \sqrt{2}} |\Omega|_\omega dr \wedge \omega \Big) \Big) \\
    &= d \Big( e^{2f} \star_\varphi \frac{1}{2 \sqrt{2}} \Big( dr \wedge d |\Omega|_\omega \wedge \omega + |\Omega|_\omega dr \wedge d \omega \Big) \Big) \\
    &= d \Big( e^{2f} \Big( \star \Big[ d |\Omega|_\omega \wedge \omega \Big] + |\Omega|_\omega \star d\omega \Big) \Big),
\end{aligned}
\end{equation}
and so
\begin{equation} \label{rhs-ansatz}
 d (e^{2f} d^*_\varphi (e^{-2f} \psi)) =  d \star \Big[ d (\log |\Omega|_\omega) \wedge \omega \Big] + d \star d\omega.
  \end{equation}
Let us write 
\begin{equation}
    d\omega = \mu \wedge \omega + (d\omega)_0
\end{equation}
where $\mu$ is the Lee form and $(d\omega)_0$ is the primitive part of $d\omega$. We claim the following two identities: first, we have
\begin{equation} \label{cplx-geo-id1}
 d \star d \omega =  -2 d \Big[ (J \mu) \wedge \omega \Big] - 2 i \del \delbar \omega,
\end{equation}
and, since $d (|\Omega|_\omega \, \omega^2) = 0$, we also have
\begin{equation} \label{cplx-geo-id2}
   d (\log |\Omega|_\omega) = -2 \mu.
\end{equation}
Let us assume~\eqref{cplx-geo-id1} and~\eqref{cplx-geo-id2} for now and substitute these into~\eqref{rhs-ansatz}.  The result is
\begin{align*}
    d (e^{2f} d^*_\varphi (e^{-2f} \psi)) &= -2 d \star (\mu \wedge \omega) -2 d \Big[ (J \mu) \wedge \omega \Big] - 2 i \del \delbar \omega \\
    &= 2 d \Big[ (J \mu) \wedge \omega \Big] -2 d \Big[ (J \mu) \wedge \omega \Big] - 2 i \del \delbar \omega \\
    &= -2 i \del \delbar \omega.
\end{align*}
Therefore the flow
\begin{equation*}
    \del_t (e^{-2f} \psi) = d (e^{2f} d^*_\varphi (e^{-2f} \psi))
\end{equation*}
does indeed reduce to
\begin{equation*}
    \frac{1}{2} \del_t (|\Omega|_\omega \omega^2) = 2 i \del \delbar \omega
\end{equation*}
on the base $X^6$.

We now give the detailed justification of~\eqref{cplx-geo-id1} and~\eqref{cplx-geo-id2}. For~\eqref{cplx-geo-id1}, we use the fact that in complex dimension $3$, the Hodge star acts by
\begin{equation*}
    \star \gamma = J \gamma
\end{equation*}
for a primitive $3$-form $\gamma \in \cal{P}^3$ and
\begin{equation*}
    \star (\alpha \wedge \omega) = - J (\alpha \wedge \omega) = - (J \alpha) \wedge \omega
\end{equation*}
for a $1$-form $\alpha \in \Omega^1$. (See, for example,~\cite[Proposition 1.2.31]{Huybrechts}.) Thus we have
\begin{align*}
    \star d \omega &= \star (\mu \wedge \omega) + \star (d\omega)_0 \\
    &= - (J \mu) \wedge \omega + J (d\omega - \mu \wedge \omega) \\
    &= - 2 (J \mu) \wedge \omega + i (\del \omega - \delbar \omega).
\end{align*}
which proves~\eqref{cplx-geo-id1}. For~\eqref{cplx-geo-id2}, we use that the anomaly flow preserves the conformally balanced condition, so that
\begin{equation*}
 d (|\Omega|_{\omega_t} \omega_t^2) = 0
\end{equation*}
for all time. Expanding out this equation gives
\begin{align*}
    0 &= d |\Omega|_\omega \wedge \omega^2 + 2 |\Omega|_\omega \omega \wedge d\omega \\
    &= d |\Omega|_\omega \wedge \omega^2 + 2 |\Omega|_\omega \omega \wedge \mu \wedge \omega.
\end{align*}
Dividing both sides by $|\Omega|_\omega$, we get
\begin{equation*}
    0 = d (\log |\Omega|_\omega) \wedge \omega^2 + 2 \mu \wedge \omega^2.
\end{equation*}
Since wedging with $\omega^2$ is an isomorphism on 1-forms, we obtain~\eqref{cplx-geo-id2}.
\end{proof}

\begin{rmk} \label{lift-modifiedaf}
  Proposition~\ref{lift-af} also holds for the modified $G_2$-anomaly flows. Namely, the $\alpha'=0$ anomaly flow on $(X^6,\omega_t)$ generates via ansatz~\eqref{af:ansatz} a solution to the equations
  \begin{equation} \label{modifiedconfcoclosed}
\partial_t ( e^{-2f} \psi) = d \left( e^{2f} d^* ( e^{-2f} \psi) \right) + (C-2) d \Big( ({\rm tr} \, T) \varphi \Big), \quad d (e^{-2f} \psi) = 0
    \end{equation}
    where $C$ is arbitrary. This is because ${\rm tr} \, T = 0$ for ansatz~\eqref{af:ansatz}. Indeed, we note that by type decomposition,
\begin{align}
    \varphi \wedge d\varphi &= \Big[ \underbrace{\Re \Big( \frac{1}{|\Omega |_\omega} \Omega \Big)}_{(3,0) \oplus (0,3)} - \underbrace{dr \wedge \omega}_{dr \wedge (1,1)} \Big] \wedge \Big[ - \underbrace{ d (\log |\Omega|_\omega) \wedge \Re \Big( \frac{1}{|\Omega|_\omega} \Omega \Big)}_{(3,1) \oplus (1,3)} + \underbrace{dr \wedge d\omega}_{dr \wedge [(2,1) \oplus (1,2)]} \Big] = 0
\end{align}
and so ${\rm tr} \, T = 0$ for this $G_2$-structure. Therefore all modified $G_2$-anomaly flows coincide with the $SU(3)$-anomaly flow on $X^6$. Additional arguments from string theory~\cite{AMP24} distinguish the particular constant $C = \frac{4}{3}$ among these flows.
  \end{rmk}

  In summary, we have demonstrated that the conformally coclosed flow~\eqref{modifiedconfcoclosed} can be understood as a $G_2$-anomaly flow with $\alpha'=0$. Though the anomaly flow on a Calabi--Yau threefold~\cite{PPZ1, PPZ2} involves additional $\alpha'$-terms, we do not consider these terms in the $G_2$-anomaly flow in the present work (see~\cite{AMP24} for progress in this direction).

\begin{rmk}
Another natural flow of non-K\"ahler structures is the pluriclosed flow~\cite{StreetsTian}. We leave open the question of deriving the $G_2$-lifts of the pluriclosed flow. 
\end{rmk}

\subsection{Ascension of the complex Monge--Amp\`ere equation} \label{sec:ascension}

Having derived the $G_2$-flow
\[
\partial_t ( e^{-2f} \psi) = d \left( e^{2f} d^* ( e^{-2f} \psi) \right)
\]
from a 6-dimensional coclosed flow, we now construct special solutions by lifting complex Monge--Amp\`ere flows. Let $(X,g)$ be a compact K\"ahler manifold and $a(x) \in C^\infty(X)$ a given function. Consider the Monge--Amp\`ere flow
    \begin{equation} \label{CMA-flow}
{\del_t u} = e^{-a(x)} \frac{\det (g_{\alpha \bar{\beta}} + u_{\alpha \bar{\beta}})}{\det g_{\alpha \bar{\beta}} }, \quad g_{\alpha \bar{\beta}} + u_{\alpha \bar{\beta}} > 0.
      \end{equation}
This Monge--Amp\`ere flow is not the K\"ahler--Ricci flow~\cite{Cao1985} because there is no logarithm. It was shown in~\cite{PPZ3, PicardZhang2020} that the flow $u(t)$ exists for all $t>0$, and the deviation from the average $\tilde{u} = u - \frac{1}{V} \int_X u \, d {\rm vol}_g$ converges smoothly as $t \rightarrow \infty$. Here $V$ is the volume of $(X,g)$. The proof involves a modification of Yau's~\cite{Yau78} a priori estimates for the complex Monge--Amp\`ere equation. One particular difficulty is that the Monge--Amp\`ere determinant is not concave, and so the general theory of~\cite{PhongTo2021} does not apply.

      We now restrict to the setting of a K\"ahler manifold $(X,g)$ of dimension 3 with a holomorphic volume form $\Omega$, and set $a(x) = \log |\Omega|^2_g - \log 2$. The complex Monge--Amp\`ere flow~\eqref{CMA-flow}  becomes
      \begin{equation} \label{CMA-flow2}
{\del_t u} = 2|\Omega|^{-2}_{\chi}, \quad \chi_{\alpha \bar{\beta}} = g_{\alpha \bar{\beta}} + u_{\alpha \bar{\beta}}>0
        \end{equation}
        where $\chi \in \Omega^{1,1}(X,\mathbb{R})$. It was noticed in~\cite{PPZ3} that a solution to the Monge-Amp\`ere flow~\eqref{CMA-flow2} creates a solution to the equation
      \begin{equation} \label{6dconformal}
\del_t ( |\Omega|_\omega \omega^2) = 4 i \partial \bar{\partial} \omega.
      \end{equation}
      We give here an outline of the calculation. First, from a solution to~\eqref{CMA-flow}, we build a conformally coclosed hermitian metric $\omega(t)$ by the ansatz
      \[
|\Omega|_{\omega(t)} \omega(t)^2 = \chi(t)^2, \quad \chi_{\alpha \bar{\beta}} = g_{\alpha \bar{\beta}} + u_{\alpha \bar{\beta}}.
\]
Solving for $\omega$ in this ansatz gives $\omega = |\Omega|_\chi^{-2} \chi$. (See~\cite{PPZ3} for details.) Next, we substitute the ansatz for $\omega(t)$ into the flow~\eqref{6dconformal}. The result is
      \[
2 i \partial \bar{\partial} \dot{u} \wedge \chi = 4 i \partial \bar{\partial} (|\Omega|^{-2}_\chi) \wedge \chi.
      \]
      We see that since $u$ is a solution to the Monge--Amp\`ere flow~\eqref{CMA-flow2}, then $\omega(t)$ is a solution to the conformal coclosed flow~\eqref{6dconformal}.

We now make the jump from six to seven dimensions.

\begin{propn} Let $(X^6,g)$ be a compact K\"ahler manifold of dimension 3 with holomorphic volume form $\Omega$. Let $u$ be a solution to
 \[
{\del_t u} = e^{-a(x)} \frac{\det (g_{\alpha \bar{\beta}} + u_{\alpha \bar{\beta}})}{\det g_{\alpha \bar{\beta}} }, \quad \chi_{\alpha \bar{\beta}} = g_{\alpha \bar{\beta}} + u_{\alpha \bar{\beta}} > 0,
   \]
with $a(x) = \log |\Omega|^2_g-\log 2$. Then if we let $M^7 = S^1 \times X^6$ and 
   \begin{equation} 
\psi = - dr \wedge \frac{1}{|\Omega|_{\omega}} {\rm Im} \, \Omega  - \frac{1}{2} \omega^2, \quad \omega = |\Omega|_\chi^{-2} \chi,
    \end{equation}
    where $r$ denotes the angle coordinate on the $S^1$, the resulting $G_2$-structures on $M^7$ solve the flow equations
    \[
      \partial_t ( e^{-2f} \psi) = d \left( e^{2f} d^* ( e^{-2f} \psi) \right) + (C-2) d \Big( ({\rm tr} \, T) \varphi \Big)
      \]
    for any constant $C$. Here $e^{4 f} = \frac{{\rm vol}_\psi}{{\rm vol}_R}$ with ${\rm vol}_R = dr \wedge (i \Omega \wedge \bar{\Omega})$. 
  \end{propn}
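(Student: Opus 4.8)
The plan is to compose two facts that are already in place. By the discussion immediately preceding the statement, a solution $u(t)$ of the Monge--Amp\`ere flow~\eqref{CMA-flow} with $a(x)=\log|\Omega|^2_g-\log 2$ reduces to a solution of~\eqref{CMA-flow2}, and the associated $\chi(t)=g_{\alpha\bar\beta}+u_{\alpha\bar\beta}>0$ produces, via $\omega(t)=|\Omega|_{\chi(t)}^{-2}\chi(t)$, a hermitian metric solving the six-dimensional conformally coclosed flow $\partial_t(|\Omega|_{\omega(t)}\,\omega(t)^2)=4i\,\partial\bar\partial\omega(t)$, which is exactly the $\alpha'=0$ anomaly flow of Proposition~\ref{lift-af}. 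So all that remains is to feed $\omega(t)$ into Proposition~\ref{lift-af} and Remark~\ref{lift-modifiedaf}: the ansatz $\psi=-dr\wedge\frac{1}{|\Omega|_\omega}\mathrm{Im}\,\Omega-\frac12\omega^2$ on $M^7=S^1\times X^6$ lifts this six-dimensional solution to a solution of $\partial_t(e^{-2f}\psi)=d(e^{2f}d^*(e^{-2f}\psi))+(C-2)d\big(({\rm tr}\,T)\varphi\big)$ with $e^{4f}={\rm vol}_\psi/{\rm vol}_R$ and ${\rm vol}_R=dr\wedge(i\Omega\wedge\bar\Omega)$, and the $(C-2)$-term drops out for every $C$ because ${\rm tr}\,T=0$ for this ansatz (the type-decomposition computation in Remark~\ref{lift-modifiedaf}).

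Concretely I would carry this out in four short steps. (i) Reduce~\eqref{CMA-flow} to~\eqref{CMA-flow2}: with the chosen $a$ one has $e^{-a}=2|\Omega|^{-2}_g$, and by the normalization ${\rm vol}_6=\omega^3/3!=i|\Omega|^{-2}_\omega\,\Omega\wedge\bar\Omega$ one gets $\tfrac{\det(g_{\alpha\bar\beta}+u_{\alpha\bar\beta})}{\det g_{\alpha\bar\beta}}=\tfrac{\chi^3/3!}{\omega_g^3/3!}=\tfrac{|\Omega|^2_g}{|\Omega|^2_\chi}$, so the flow becomes $\partial_t u=2|\Omega|^{-2}_\chi$. (ii) Verify the conformally balanced condition needed as input to Proposition~\ref{lift-af}: since $\chi=g+i\partial\bar\partial u$ is $d$-closed (a K\"ahler form plus an exact real $(1,1)$-form) and $|\Omega|_\omega\,\omega^2=\chi^2$ by the ansatz $\omega=|\Omega|_\chi^{-2}\chi$, we have $d(|\Omega|_{\omega(t)}\,\omega(t)^2)=d(\chi(t)^2)=0$ for all $t$, in particular at $t=0$. (iii) Check the six-dimensional flow: differentiating $|\Omega|_\omega\,\omega^2=\chi^2$ gives $\partial_t(|\Omega|_\omega\,\omega^2)=2\,\dot\chi\wedge\chi=2i\,\partial\bar\partial\dot u\wedge\chi$, while $4i\,\partial\bar\partial\omega=4i\,\partial\bar\partial(|\Omega|^{-2}_\chi\,\chi)=4i\,\partial\bar\partial(|\Omega|^{-2}_\chi)\wedge\chi$ using $d\chi=0$ (hence $\partial\chi=\bar\partial\chi=0$); by step (i), $\dot u=2|\Omega|^{-2}_\chi$, so the two sides agree and $\omega(t)$ solves $\partial_t(|\Omega|_\omega\,\omega^2)=4i\,\partial\bar\partial\omega$. (iv) Invoke Proposition~\ref{lift-af} and Remark~\ref{lift-modifiedaf} to conclude for every constant $C$.

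The main obstacle is bookkeeping rather than conceptual: one must reconcile the several normalizations of the holomorphic-volume-form norm ($|\Omega|_g$, $|\Omega|_\chi$, $|\Omega|_\omega$) appearing across the cited computations, and pin down that the ansatz $\omega=|\Omega|_\chi^{-2}\chi$ is precisely the one making $|\Omega|_\omega\,\omega^2=\chi^2$ hold on the nose (not merely up to a constant), after which steps (ii)--(iii) are immediate. A secondary point to flag, but not to reprove, is the long-time existence and preservation of positivity of $\chi(t)$ along~\eqref{CMA-flow2}, which is the input from~\cite{PPZ3, PicardZhang2020}.
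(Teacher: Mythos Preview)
Your proposal is correct and follows essentially the same approach as the paper: the paper's proof is simply ``The proof is a consequence of Proposition~\ref{lift-af} applied to~\eqref{6dconformal}, together with Remark~\ref{lift-modifiedaf},'' and your steps (i)--(iii) unpack exactly the preparatory computation the paper outlines in the paragraphs preceding the statement, while your step (iv) is the paper's proof verbatim. Your bookkeeping in steps (i)--(iii) is accurate (in particular the check that $|\Omega|_\omega\,\omega^2=\chi^2$ under $\omega=|\Omega|_\chi^{-2}\chi$, and the use of $d\chi=0\Rightarrow\partial\chi=\bar\partial\chi=0$ for a real $(1,1)$-form).
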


The proof is a consequence of Proposition~\ref{lift-af} applied to~\eqref{6dconformal}, together with Remark~\ref{lift-modifiedaf}. We conclude that the conformally coclosed $G_2$-flows considered in this paper are generalizations of complex Monge--Amp\`ere flows in K\"ahler geometry.

\subsection{Remarks on the hypersymplectic flow} \label{sec:HS} 

We have thus far considered $G_2$-lifts from a 6-manifold $X^6$ to $M^7=S^1 \times X^6$. We briefly comment on lifting geometric flows on a 4-manifold $X^4$ to 7-dimensions via $M^7 = T^3 \times X^4$. Examples of this include:

  \begin{itemize} \setlength\itemsep{-1mm}
  \item The K\"ahler--Ricci flow on a $K3$ surface once again generates the coflow on $M^7 = T^3 \times K3$ via a suitable ansatz; we omit the details and refer to~\cite{PS}.
  \item There is a natural flow of hypersymplectic structures on a 4-manifold which lifts to the $G_2$-Laplacian flow~\cite{FineYao18, FineYao19}.
  \end{itemize}

Let us say a few words on the flow of hypersymplectic structures which lifts to the $G_2$-Laplacian flow. Following Donaldson~\cite{Don06} and Fine--Yao~\cite{FineYao18}, a hypersymplectic structure on an oriented 4-manifold $X^4$ is a triple of 2-forms $\underline{\omega} = (\omega_1,\omega_2,\omega_3)$ such that each $\omega_i \in \Omega^2(X^4)$ is symplectic, and furthermore $\alpha \omega_1 + \beta \omega_2 + \gamma \omega_3$ is symplectic for all $(\alpha, \beta, \gamma) \neq (0,0,0)$. From a hypersymplectic structure $\underline{\omega}$, we obtain a metric $g_{\underline{\omega}}$ tensor on $X^4$ via
  \begin{equation*}
g_{\underline{\omega}}(X,Y) \, \mu = {\frac{1}{6}} \epsilon^{ijk} (X \hook \omega_i) \wedge (Y \hook \omega_j) \wedge \omega_k,
  \end{equation*}
  where $\mu$ is a positive top-form such that
  \begin{equation*}
\frac{1}{2} \omega_i \wedge \omega_j = Q_{ij} \mu, \quad \det Q_{ij} = 1.
  \end{equation*}
  The hypersymplectic flow is defined~\cite{FineYao18} by
  \begin{equation} \label{eq:hyperflow}
\frac{d \, \underline{\omega}}{dt}  = d \left( Q d^* (Q^{-1} \underline{\omega}) \right).
  \end{equation}

In the context of $G_2$-lifts: suppose we have a solution $\underline{\omega}(t)$ to the hypersymplectic flow. Then if we let $M^7 = T^3 \times X^4$ and define
  \begin{equation}
\varphi(t) = - dr^1 \wedge dr^2 \wedge dr^3 + \sum_i dr^i \wedge \omega_i(t), \quad \varphi(t) \in \Omega^3(M^7)
\end{equation}
where $r^1$ $r^2$, and $r^3$ denote the angle coordinates on the $T^3$, then the evolving family $(M^7, \varphi(t))$ is a solution to the $G_2$-Laplacian flow
  \begin{equation} \label{eq:G2Laplacianflow}
\frac{d \varphi}{dt} = d d^* \varphi, \quad d \varphi(0) = 0.
  \end{equation}
  Indeed, this property is how Fine--Yao~\cite{FineYao18} derived the hypersymplectic flow.

  Test examples can be found by solving parabolic complex Monge--Amp\`ere equations; it was noticed in~\cite{FineHeYao25} that the calculation of~\cite{PS} can be interpreted as producing special solutions to the hypersymplectic flow with K\"ahler initial data.

  \begin{propn}~\cite{PS}
    Let $X^4$ be a compact complex twofold admitting a holomorphic volume form $\Omega$ and a K\"ahler metric $\omega_0$ such that $d \omega_0=0$. Start the ${\rm MA}^{1/3}$ flow
      \[
\frac{d u}{dt} =\bigg( e^{-a(x)} \frac{\det (\omega_0 + i \partial \bar{\partial} u)}{\det \omega_0}  \bigg)^{1/3}.
\]
with $a(x) = 2 \log |\Omega|_{\omega_0} - \log 864$. This flow exists for all time. Denote $\tilde{\omega}_t = \omega_0 + i \partial \bar{\partial} u(t)$. Denote $\Theta_t$ a family of diffeomorphisms determined by $\Theta_0 = id$ and ${\frac{d}{dt}} \Theta_t = \nabla_{\tilde{g}_t} \log |\Omega|_{\tilde{\omega}_t}$.

 Then if we let
   \[
\omega_t = \Theta_t^* \tilde{\omega}_t, \quad \Omega_t = \Theta_t^* \Omega_0,
\]
the hypersymplectic triple
\[
\underline{\omega}(t) = (\omega_t, {\rm Re} \, \Omega_t, {\rm Im} \, \Omega_t).
\]
solves the hypersymplectic flow~\eqref{eq:hyperflow}. Equivalently,
  \begin{equation}
\varphi(t) = - dr^1 \wedge dr^2 \wedge dr^3 + \sum_i dr^i \wedge \omega_i(t), \quad \varphi(t) \in \Omega^3(M^7)
\end{equation}
is an evolving family of $G_2$ structures on $M^7= T^3 \times X^4$ solving the $G_2$-Laplacian flow~\eqref{eq:G2Laplacianflow}.
\end{propn}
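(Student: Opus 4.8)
The plan is to chain together the two reductions that have already been established in the excerpt. The $\mathrm{MA}^{1/3}$ flow is precisely the Monge--Amp\`ere flow~\eqref{CMA-flow} with the exponent $1/3$ that descends to the $G_2$-Laplacian flow (this is the parabolic analogue of the statement in Section~\ref{sec:ascension} about the $\mathrm{MA}^{1/3}$-flow, and is cited from~\cite{PS}); meanwhile the lift of the hypersymplectic flow to the $G_2$-Laplacian flow on $M^7 = T^3 \times X^4$ has just been recalled. So the proposed structure is: (i) verify that the stated normalization $a(x) = 2\log|\Omega|_{\omega_0} - \log 864$ and the extra cube root are exactly what is needed for the $\mathrm{MA}^{1/3}$ flow on $X^4$ to descend to the Laplacian flow; (ii) identify the evolving $G_2$-structure $\varphi(t) = -dr^{123} + \sum_i dr^i \wedge \omega_i(t)$ built from the hypersymplectic triple $(\omega_t, \Re\,\Omega_t, \Im\,\Omega_t)$ with the $G_2$-structure coming from the Calabi--Yau reduction of $X^4$, and hence read off from the $\mathrm{MA}^{1/3}$-to-Laplacian reduction that this $\varphi(t)$ solves~\eqref{eq:G2Laplacianflow}; (iii) invoke the fact that $\varphi(t)$ solving the $G_2$-Laplacian flow is \emph{equivalent} to $\underline{\omega}(t)$ solving the hypersymplectic flow~\eqref{eq:hyperflow}, which is exactly the derivation of Fine--Yao already quoted above. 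Global existence for all time is inherited from the long-time existence of the $\mathrm{MA}^{1/3}$ flow on compact K\"ahler manifolds, which is cited.

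Concretely, the first step is to check that the triple $\underline{\omega}(t) = (\omega_t, \Re\,\Omega_t, \Im\,\Omega_t)$ is genuinely hypersymplectic: $\Omega_t = \Theta_t^*\Omega_0$ is a nowhere-vanishing holomorphic $(2,0)$-form for the pulled-back complex structure, so $\Re\,\Omega_t$ and $\Im\,\Omega_t$ are closed (pullback of closed) and, together with the K\"ahler form $\tilde\omega_t$, span a positive-definite triple of self-dual forms; pulling back by the diffeomorphism $\Theta_t$ preserves the hypersymplectic property, and the normalization $\det Q_{ij} = 1$ is arranged by the Monge--Amp\`ere scaling $a(x)$. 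The second step is bookkeeping: one writes out $\varphi(t)$ explicitly in terms of $\omega_t$ and $\Omega_t$, checks $d\varphi(0) = 0$ using $d\omega_0 = 0$ and $d\Omega_0 = 0$, and matches the induced metric $g_{\varphi(t)}$ with $\frac{1}{\text{(const)}}\sum (dr^i)^2 + g_{X^4}(t)$. The third step cites Fine--Yao's identity that $dd^*\varphi$ on $M^7 = T^3 \times X^4$ restricts, in this block form, to $d(Q\,d^*(Q^{-1}\underline\omega))$ on $X^4$, so the two flows are the same equation.

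The main obstacle, as usual in these reductions, is the careful tracking of constants: the coefficient $\log 864$ in $a(x)$, the normalization of the hypersymplectic volume $\mu$ with $\det Q = 1$, and the coefficients in the definition of $\varphi(t)$ must all be pinned down so that the $\mathrm{MA}^{1/3}$ exponent $\big(e^{-a}\det(\omega_0 + i\partial\bar\partial u)/\det\omega_0\big)^{1/3}$ matches the scalar speed appearing after one substitutes the ansatz into~\eqref{eq:G2Laplacianflow}. This is precisely the kind of computation carried out in~\cite{PS} and~\cite{FineYao18}, so the honest thing to do is to attribute it: the proof is to observe that the computation of~\cite{PS} deriving the $\mathrm{MA}^{1/3}$-flow from the $G_2$-Laplacian flow, combined with the Fine--Yao equivalence~\cite{FineYao18} between the $G_2$-Laplacian flow on $T^3 \times X^4$ and the hypersymplectic flow, gives the claim directly, with long-time existence supplied by~\cite{FineHeYao25, PicardZhang2020}. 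I would keep the written proof to a few lines: state that $\underline\omega(t)$ is hypersymplectic, cite the $\mathrm{MA}^{1/3}$-to-Laplacian reduction to conclude $\varphi(t)$ solves~\eqref{eq:G2Laplacianflow}, and cite the Fine--Yao equivalence to transfer this back to~\eqref{eq:hyperflow}.
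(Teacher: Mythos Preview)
The paper does not actually supply a proof of this proposition: it is stated with attribution to~\cite{PS}, preceded only by the remark that ``it was noticed in~\cite{FineHeYao25} that the calculation of~\cite{PS} can be interpreted as producing special solutions to the hypersymplectic flow with K\"ahler initial data.'' Your proposal --- to chain the $\mathrm{MA}^{1/3}$-to-Laplacian reduction of~\cite{PS} with the Fine--Yao equivalence between the hypersymplectic flow and the $G_2$-Laplacian flow on $T^3 \times X^4$, and to keep the written proof to a few lines of citation --- is therefore exactly in the spirit of what the paper does, and is the correct approach.
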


\section{Curvature of conformally coclosed \texorpdfstring{$G_2$}{G2}-structures} \label{sect-curvature}

Having introduced flows of conformally coclosed $G_2$-structures through their connection with familiar flows in complex geometry, we now turn to deriving various identities for conformally coclosed structures. The main objective of this section is to derive the expression for the Ricci curvature of a conformally coclosed $G_2$-structure. We use the definitions, notation, and identities from~\cite{Kar09, DGK25} throughout.

\subsection{Preliminary identities for conformally coclosed \texorpdfstring{$G_2$}{G2}-structures} \label{sec:cc-prelim}

Certain tensors associated to torsion and curvature which feature in this article are:
\begin{align*}
    (\sserif{P}T)_{ij} &= T_{kl} \psi_{klij}, \\
    (\sserif{V}T)_k &= T_{pq} \varphi_{pqk}, \\
    \tensor[_3]{K}{_i_j} &= (\nabla_p T_{qi}) \varphi_{pqj}.
\end{align*}

Recall the important $G_2$-Bianchi identity relating the torsion and curvature of a $G_2$-manifold:
\begin{align} \label{eqn-G2-Bianchi-a}
    \nabla_i T_{jk} - \nabla_j T_{ik} = T_{ip}T_{jq} \varphi_{pqk} + \frac{1}{2} R_{ijpq} \varphi_{pqk}.
\end{align}

In~\cite[Theorem 5.8]{DGK25}, this identity was decomposed into its various representation-theoretic components, resulting in independent identities relating $\Rm$, $\nabla T$, and $T$. The relations which are relevant for our purposes in the present article are:
    \begin{alignat}{2}
        \label{eqn-G2-Bianchi-7a}
        &\quad \div T^t - \nabla (\tr T) - T (\sserif{V}T) = 0, \\
        \label{eqn-G2-Bianchi-7b}
        &\quad \inner{\nabla T}{\psi} - (\tr T) \sserif{V}T + \sserif{V}(T^2) + T^t (\sserif{V}T) = 0, \\
        \label{eqn-G2-Bianchi-14}
        &\quad \pi_{14}(\tensor[_3]{K}{}) = - (\tr T) T_{14} + (T^2)_{14} + \big( (\sserif{P}T) T \big){}_{14}.
    \end{alignat}
We will use these to derive useful relations in the specific situation when the $G_2$-structure $\varphi$ is conformally coclosed. We are particularly interested in~\eqref{eqn-G2-Bianchi-7a} and~\eqref{eqn-G2-Bianchi-14}. (It can be shown that in the conformally coclosed case,~\eqref{eqn-G2-Bianchi-7b} gives no information.)

Recall that in our setting, $d (e^{-2f} \psi) = 0$, which by~\eqref{eq:torsion-forms} implies that
\begin{align*}
    \tau_1 = \frac{1}{2} df \text{ and } \tau_2 = 0.
\end{align*}
In terms of the torsion tensor $T$, we then obtain from~\eqref{eq:torsion-relations} that
\begin{equation} \label{eq:torsion-ccc}
    T_{ij} = \frac{1}{4} \tau_0 g_{ij} - (\tau_3')_{ij} + \frac{1}{2} (\nabla_p f) \varphi_{pij}
\end{equation}
which in turn implies that
\begin{equation} \label{eq:ccc-relations}
\begin{aligned}
    (\sserif{P}T)_{ij} &= T_{kl} \psi_{klij} = -2 (\nabla_p f) \varphi_{pij}, \\
    (\sserif{V}T)_k &= T_{ij} \varphi_{ijk} = 3 (\nabla_k f).
\end{aligned}
\end{equation}

\begin{lem}
    \label{lem-conf-coclosed-T(VT)-divT}
    If $\varphi$ is a conformally coclosed $G_2$-structure with $d(e^{-2f} \psi) = 0$, then
    \begin{align*}
        (\nabla_p f) T_{kp} = (\nabla_p f) T_{pk} \quad \text{and} \quad \nabla_p T_{kp} = \nabla_p T_{pk}
    \end{align*}
    or, in terms of the notation in~\cite{DGK25},
    \begin{align*}
    T(\sserif{V}T) = T^t (\sserif{V}T) \quad \text{and} \quad \div T = \div T^t.
\end{align*}
\end{lem}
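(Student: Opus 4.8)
The plan is to exploit the explicit shape of the torsion in the conformally coclosed case. By~\eqref{eq:torsion-ccc}, since $g$ and $\tau_3'$ are symmetric, the skew-symmetric part of $T$ is carried entirely by $df$:
\[
T_{ij} - T_{ji} = (\nabla_p f)\,\varphi_{pij}.
\]
Both identities then reduce to showing that suitable contractions of this skew part vanish, and the recurring mechanism is simply that pairing a symmetric tensor against an antisymmetric one gives zero.

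For $T(\sserif{V}T) = T^t(\sserif{V}T)$, I would contract the displayed relation with $\nabla_p f$:
\[
(\nabla_p f)(T_{kp} - T_{pk}) = (\nabla_p f)(\nabla_q f)\,\varphi_{qkp} = 0,
\]
since $\varphi_{qkp}$ is skew in $q$ and $p$ while $(\nabla_p f)(\nabla_q f)$ is symmetric.

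For $\div T = \div T^t$, I would take $\nabla_p$ of the displayed relation:
\[
\nabla_p(T_{kp} - T_{pk}) = (\nabla_p \nabla_q f)\,\varphi_{qkp} + (\nabla_q f)\,\nabla_p \varphi_{qkp}.
\]
The first term vanishes because $\Hess f$ is symmetric and $\varphi_{qkp}$ is skew in $q,p$. For the second term I would substitute $\nabla_p \varphi_{qkp} = T_{pa}\psi_{aqkp}$ from the definition of torsion, permute the four indices of the totally antisymmetric $\psi$ to recognize $T_{pa}\psi_{aqkp} = -(\sserif{P}T)_{qk}$, and then invoke the conformally coclosed identity $(\sserif{P}T)_{qk} = -2(\nabla_r f)\varphi_{rqk}$ from~\eqref{eq:ccc-relations}. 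This gives $\nabla_p \varphi_{qkp} = 2(\nabla_r f)\varphi_{rqk}$, so $(\nabla_q f)\,\nabla_p \varphi_{qkp} = 2(\nabla_q f)(\nabla_r f)\varphi_{rqk} = 0$ by the same symmetry/antisymmetry argument, whence $\nabla_p T_{kp} = \nabla_p T_{pk}$.

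The whole computation is short; the only point that demands care — the ``main obstacle,'' such as it is — is correctly tracking the sign that appears when rearranging the four indices of $\psi$ in order to identify $T_{pa}\psi_{aqkp}$ with $\pm\,\sserif{P}T$. In particular, none of the $G_2$-Bianchi identities~\eqref{eqn-G2-Bianchi-7a}--\eqref{eqn-G2-Bianchi-14} are needed for this lemma.
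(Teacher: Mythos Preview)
Your proof is correct and essentially identical to the paper's own argument: both use $T_{ij}-T_{ji}=(\nabla_p f)\varphi_{pij}$ from~\eqref{eq:torsion-ccc}, contract with $\nabla f$ for the first identity, and for the second identity take $\nabla_p$, kill the Hessian term by symmetry, and handle $(\nabla_q f)T_{pa}\psi_{aqkp}$ via the $\sserif{P}T$ formula in~\eqref{eq:ccc-relations}. The only cosmetic difference is that the paper records the vanishing of $(\nabla_r f)T_{pm}\psi_{mrkp}$ directly rather than routing through the notation $-(\sserif{P}T)_{qk}$.
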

\begin{proof}
    From~\eqref{eq:torsion-ccc} we have
    \begin{align*}
        (\nabla_p f) (T_{kp} - T_{pk}) = (\nabla_p f) (\nabla_r f) \varphi_{rkp} = 0.
    \end{align*}
    From~\eqref{eq:ccc-relations} we get
    \begin{align} \label{eq:ccc-temp}
        (\nabla_r f) T_{pm} \psi_{mrkp} = 2 (\nabla_r f) (\nabla_s f) \varphi_{srk} = 0,    
    \end{align}
    and so
    \begin{align*}
        \nabla_p (T_{kp} - T_{pk}) &= \nabla_p \Big( (\nabla_r f) \varphi_{rkp} \Big) \\
        &= (\nabla_p \nabla_r f) \varphi_{rkp} + (\nabla_r f) T_{pm} \psi_{mrkp} = 0. \qedhere
    \end{align*}
\end{proof}

\begin{propn}
    \label{propn-G2-Bianchi-conf-coclosed-7}
    If $\varphi$ is a conformally coclosed $G_2$-structure with $d(e^{-2f} \psi) = 0$, then
    \begin{align}
        \label{eqn-G2-Bianchi-conf-coclosed-7}
        \frac{3}{2} (\nabla_k \tau_0) + \nabla_m (\tau_3')_{km} = - \frac{3}{4} \tau_0 (\nabla_k f) + 3 (\nabla_m f) (\tau_3')_{km}
    \end{align}
\end{propn}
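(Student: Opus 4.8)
The plan is to start from the $\Omega^3_7$-component of the $G_2$-Bianchi identity, namely \eqref{eqn-G2-Bianchi-7a}, which reads $\div T^t - \nabla (\tr T) - T(\sserif{V}T) = 0$, and simplify each of its three terms using the conformally coclosed hypotheses. By Lemma~\ref{lem-conf-coclosed-T(VT)-divT} we may replace $\div T^t$ by $\div T$, i.e. $\nabla_p T_{kp}$, which is the natural object to compute since we have the explicit formula \eqref{eq:torsion-ccc} for $T_{ij}$ in terms of $\tau_0$, $\tau_3'$, and $\nabla f$. The term $\nabla(\tr T)$ is $\nabla_k (\tr T)$; recall $\tr T = \frac{7}{4}\tau_0$ (from \eqref{eq:torsion-ccc}, taking the trace, since $\tau_3'$ is traceless and $\varphi$ is skew), so this contributes $\frac{7}{4}\nabla_k \tau_0$. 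The term $T(\sserif{V}T)$ has $k$-component $(\sserif{V}T)_p T_{kp} = 3(\nabla_p f) T_{kp}$ by \eqref{eq:ccc-relations}, and by Lemma~\ref{lem-conf-coclosed-T(VT)-divT} this equals $3(\nabla_p f)T_{pk}$, which can again be expanded via \eqref{eq:torsion-ccc}.

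The key computation is then to expand $\nabla_p T_{kp}$ using \eqref{eq:torsion-ccc}:
\begin{align*}
\nabla_p T_{kp} = \frac{1}{4}(\nabla_k \tau_0) - \nabla_p (\tau_3')_{kp} + \frac{1}{2} (\nabla_p \nabla_r f) \varphi_{rkp} + \frac{1}{2} (\nabla_r f)(\nabla_p \varphi_{rkp}).
\end{align*}
The third term vanishes because $\nabla_p \nabla_r f$ is symmetric in $p,r$ while $\varphi_{rkp}$ is skew in $r,p$. The fourth term uses $\nabla_p \varphi_{rkp} = T_{pm}\psi_{mrkp}$, and by \eqref{eq:ccc-temp} (established inside the proof of Lemma~\ref{lem-conf-coclosed-T(VT)-divT}) the contraction $(\nabla_r f)T_{pm}\psi_{mrkp}$ vanishes. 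So $\nabla_p T_{kp} = \frac{1}{4}\nabla_k \tau_0 - \nabla_m (\tau_3')_{km}$. Substituting everything into \eqref{eqn-G2-Bianchi-7a} gives
\begin{align*}
\frac{1}{4}\nabla_k \tau_0 - \nabla_m (\tau_3')_{km} - \frac{7}{4}\nabla_k \tau_0 - 3(\nabla_p f)T_{pk} = 0,
\end{align*}
and expanding $3(\nabla_p f)T_{pk}$ via \eqref{eq:torsion-ccc} as $\frac{3}{4}\tau_0 (\nabla_k f) - 3(\nabla_p f)(\tau_3')_{pk} + \frac{3}{2}(\nabla_p f)(\nabla_r f)\varphi_{rpk}$, where the last term is zero by skew-symmetry, we get (using symmetry of $\tau_3'$) exactly \eqref{eqn-G2-Bianchi-conf-coclosed-7} after rearranging: $-\frac{3}{2}\nabla_k \tau_0 - \nabla_m(\tau_3')_{km} = \frac{3}{4}\tau_0(\nabla_k f) - 3(\nabla_m f)(\tau_3')_{km}$, i.e. the stated identity up to an overall sign.

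The main obstacle is purely bookkeeping: getting the numerical coefficients right, in particular the value of $\tr T$ and the consistent use of Lemma~\ref{lem-conf-coclosed-T(VT)-divT} to swap $\div T^t \leftrightarrow \div T$ and $T(\sserif{V}T) \leftrightarrow T^t(\sserif{V}T)$, together with the two skew-symmetry cancellations and the contraction identity \eqref{eq:ccc-temp}. No genuinely new ingredient is needed beyond \eqref{eqn-G2-Bianchi-7a}, the torsion formula \eqref{eq:torsion-ccc}, and the relations \eqref{eq:ccc-relations}; the only subtlety is ensuring the Bianchi identity is applied in the form where the substitutions from the lemma are valid.
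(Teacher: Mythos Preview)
Your proof is correct and follows essentially the same approach as the paper: expand each term of the $\Omega^3_7$-Bianchi identity~\eqref{eqn-G2-Bianchi-7a} using the explicit torsion formula~\eqref{eq:torsion-ccc}, with the Hessian-against-$\varphi$ and $(\nabla f)T\psi$ contractions vanishing by skew-symmetry and~\eqref{eq:ccc-temp}. The only cosmetic point is that your appeal to Lemma~\ref{lem-conf-coclosed-T(VT)-divT} to ``replace $\div T^t$ by $\div T$'' is unnecessary, since $\nabla_p T_{kp}$ is already $(\div T^t)_k$ and that is what you compute directly; the lemma is likewise not needed for the $T(\sserif{V}T)$ term, as expanding $T_{kp}$ or $T_{pk}$ via~\eqref{eq:torsion-ccc} gives the same answer once the skew $\varphi$-contribution drops out.
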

\begin{proof}
    To show this, we write~\eqref{eqn-G2-Bianchi-7a} in a local orthonormal frame as
    \begin{align} \label{eq:ccc-temp2}
        \nabla_k T_{pp} = \nabla_p T_{kp} - T_{km} T_{pq} \varphi_{pqm}.
    \end{align}
    First, using~\eqref{eq:torsion-ccc} and~\eqref{eq:ccc-temp} we compute
     \begin{align*}
        \nabla_p T_{pk} & = \frac{1}{4} \nabla_k \tau_0 - \nabla_p (\tau_3')_{pk} + \frac{1}{2} (\nabla_p \nabla_r f) \varphi_{rpk} +
        (\nabla_r f) T_{pq} \psi_{qrpk} \\
        & = \frac{1}{4} \nabla_k \tau_0 - \nabla_p (\tau_3')_{pk}.
    \end{align*}
    From~\eqref{eq:ccc-relations} we have
    \begin{align*}
        T_{km} T_{pq} \varphi_{pqm} = \frac{3}{4} (\nabla_k f) \tau_0 - 3 (\nabla_m f) (\tau_3')_{km}.
    \end{align*}
    Substituting both of these into~\eqref{eq:ccc-temp2}, we obtain
    \begin{align*}
        \frac{7}{4} (\nabla_k \tau_0) = \frac{1}{4} (\nabla_k \tau_0) - \nabla_m (\tau_3')_{km} - \frac{3}{4} \tau_0 (\nabla_k f) + 3 (\nabla_m f) (\tau_3')_{km},
    \end{align*}
    which simplifies to~\eqref{eqn-G2-Bianchi-conf-coclosed-7}.
\end{proof}

The next result is a consequence of the identity~\eqref{eqn-G2-Bianchi-14}. We do not actually make use of it in this paper, but we present it here for for possible future applications to the study of conformally coclosed $G_2$-structures.
\begin{propn}
    \label{propn-G2-Bianchi-conf-coclosed-14}
    If $\varphi$ is a conformally coclosed $G_2$-structure with $d(e^{-2f} \psi) = 0$, then
    \begin{align}
        \label{eqn-G2-Bianchi-conf-coclosed-14}
        \frac{1}{6} \big( \nabla_p (\tau_3')_{pk} \big) \varphi_{kia} + \frac{1}{2} \Big[ \big( \nabla_p (\tau_3')_{iq} \big) \varphi_{pqa} - \big( \nabla_p (\tau_3')_{aq} \big) \varphi_{pqi} \Big] = 0.
    \end{align}
\end{propn}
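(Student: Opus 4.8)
The starting point is the $\pi_{14}$-component of the $G_2$-Bianchi identity, namely~\eqref{eqn-G2-Bianchi-14},
\[
\pi_{14}(\tensor[_3]{K}{}) = -(\tr T)\, T_{14} + (T^2)_{14} + \big((\sserif{P}T)T\big)_{14}.
\]
Both sides already lie in $\Omega^2_{14}$, so the plan is to substitute the conformally coclosed torsion formula~\eqref{eq:torsion-ccc}, $T_{ij} = \tfrac14 \tau_0 g_{ij} - (\tau_3')_{ij} + \tfrac12 (\nabla_p f)\varphi_{pij}$, into every term and simplify using the standard $G_2$ contraction identities from~\cite{Kar09, DGK25}. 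The useful bookkeeping device is $\pi_{14}(A)_{ij} = \tfrac16 A_{kl}\psi_{klij} + \tfrac13(A_{ij}-A_{ji})$, which annihilates symmetric tensors and $\Omega^2_7$-forms.

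First I would dispose of the easy pieces. Since the conformally coclosed torsion~\eqref{eq:torsion-ccc} has a pure-trace part, an $\cal{S}^2_0$ part and an $\Omega^2_7$ part but no $\Omega^2_{14}$ part, we have $T_{14}=0$ and the term $-(\tr T)T_{14}$ vanishes outright. Next, expand $\tensor[_3]{K}{_{ij}} = (\nabla_p T_{qi})\varphi_{pqj}$ via $\nabla_p T_{qi} = \tfrac14(\nabla_p\tau_0)g_{qi} - \nabla_p(\tau_3')_{qi} + \tfrac12(\nabla_p\nabla_r f)\varphi_{rqi} + \tfrac12(\nabla_r f)\nabla_p\varphi_{rqi}$, using $\nabla_p\varphi_{rqi} = T_{ps}\psi_{srqi}$. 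The term $\tfrac14(\nabla_p\tau_0)\varphi_{pij}$ is in $\Omega^2_7$ and dies under $\pi_{14}$; the Hessian term $\tfrac12(\nabla_p\nabla_r f)\varphi_{rqi}\varphi_{pqj}$ reduces, by $\varphi_{qir}\varphi_{qjp} = g_{ij}g_{rp} - g_{ip}g_{rj} + \psi_{irjp}$ and symmetry of $\Hess f$, to symmetric tensors plus a term killed by antisymmetry, so it too drops; and the remaining term $\tfrac12(\nabla_r f)T_{ps}\psi_{srqi}\varphi_{pqj}$ must be expanded once more via~\eqref{eq:torsion-ccc}. In parallel, compute $(T^2)_{14}$ and $\big((\sserif{P}T)T\big)_{14}$ using $(\sserif{P}T)_{ij} = -2(\nabla_p f)\varphi_{pij}$ from~\eqref{eq:ccc-relations}; the pure-trace cross terms are symmetric and the $\tau_0$--$\nabla f$ cross terms are in $\Omega^2_7$, so after $\pi_{14}$ only the $\tau_3'$-with-$\tau_3'$, $\tau_3'$-with-$\nabla f$, and $\nabla f$-with-$\nabla f$ contractions can survive on either side.

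The heart of the argument is then the algebraic cancellation: after applying $\pi_{14}$, everything is a scalar-coefficient combination of one- and two-$\varphi$/$\psi$ contractions of $\nabla\tau_3'$, $\nabla f\otimes\tau_3'$, and $\nabla f\otimes\nabla f$, and one checks — with the help of Lemma~\ref{lem-conf-coclosed-T(VT)-divT}, which packages $\nabla_p T_{kp} = \nabla_p T_{pk}$ and $(\nabla_p f)T_{kp} = (\nabla_p f)T_{pk}$ — that all $f$-dependent terms cancel among themselves and the $\tau_0$-terms drop entirely, leaving exactly~\eqref{eqn-G2-Bianchi-conf-coclosed-14}. The main obstacle is purely this bookkeeping: the substitution $\nabla\varphi = T\star\psi$ spawns nested $\psi$--$\varphi$ contractions that must be reduced with care, and several $\nabla f$-quadratic and $\nabla f$--$\tau_3'$ terms appear in both $\tensor[_3]{K}{}$ and the right-hand side and must be matched term by term; there is no conceptual difficulty, only the risk of sign and coefficient slips, so I would organize the computation by first isolating the $f$-independent part (which must reproduce~\eqref{eqn-G2-Bianchi-conf-coclosed-14}) and then separately verifying that the $f$-dependent remainder is identically zero.
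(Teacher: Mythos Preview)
Your approach is essentially the same as the paper's: both start from the $\pi_{14}$-Bianchi identity~\eqref{eqn-G2-Bianchi-14}, substitute the conformally coclosed torsion~\eqref{eq:torsion-ccc} into $\tensor[_3]{K}{}$, $(T^2)_{14}$ and $((\sserif{P}T)T)_{14}$, apply $\pi_{14}$, and verify that the $f$-dependent pieces cancel across the identity while the $\nabla\tau_3'$ terms yield~\eqref{eqn-G2-Bianchi-conf-coclosed-14}. One small note: the paper's direct computation does not actually invoke Lemma~\ref{lem-conf-coclosed-T(VT)-divT} --- the $f$-dependent contributions to the three terms turn out to be proportional to the same expression and cancel by coefficient matching alone.
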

\begin{proof}
    We will apply the conformally coclosed condition to the identity~\eqref{eqn-G2-Bianchi-14}. We sketch the ideas, leaving some of the explicit computation for the reader. Starting with the LHS, one computes that
    \begin{align*}
        (\tensor[_3]{K}{})_{ia} &= (\nabla_p T_{qi}) \varphi_{pqa} \\
        &= \nabla_p \Big[ \frac{1}{4} \tau_0 g_{qi} - (\tau_3')_{qi} + \frac{1}{2} (\nabla_r f) \varphi_{rqi} \Big] \varphi_{pqa} \\
        &= \frac{1}{4} (\nabla_k \tau_0) \varphi_{kia} - \big( \nabla_p (\tau_3')_{iq} \big) \varphi_{pqa} + \frac{1}{2} (\nabla_p \nabla_r f) \big[ g_{rp} g_{ia} - g_{ra} g_{ip} - \psi_{ripa} \big] + \frac{1}{2} (\nabla_r f) T_{pm} \psi_{mrqi} \varphi_{pqa} \\
        &= \frac{1}{4} (\nabla_k \tau_0) \varphi_{kia} - \big( \nabla_p (\tau_3')_{iq} \big) \varphi_{pqa} + \frac{1}{2} (\Delta f) g_{ia} - \frac{1}{2} (\nabla_i \nabla_a f) \\
        &\qquad + \frac{1}{2} (\nabla_r f) T_{pm} \big[ g_{am} \varphi_{pir} - g_{ai} \varphi_{pmr} + g_{ar} \varphi_{pmi} - g_{pm} \varphi_{air} + g_{pi} \varphi_{amr} - g_{pr} \varphi_{ami} \big] \\
        &= \frac{1}{4} (\nabla_k \tau_0) \varphi_{kia} - \frac{1}{2} \Big[ \big( \nabla_p (\tau_3')_{iq} \big) \varphi_{pqa} + \big( \nabla_p (\tau_3')_{aq} \big) \varphi_{pqi} \Big] \\
        &\qquad - \frac{1}{2} \Big[ \big( \nabla_p (\tau_3')_{iq} \big) \varphi_{pqa} - \big( \nabla_p (\tau_3')_{aq} \big) \varphi_{pqi} \Big] + \frac{1}{2} (\Delta f) g_{ia} - \frac{1}{2} (\nabla_i \nabla_a f) \\
        &\qquad + \frac{1}{2} \tau_0 (\nabla_k f) \varphi_{kia} + \frac{1}{2} (\nabla_p f) (\tau_3')_{pk} \varphi_{kia} + \frac{1}{2} \Big[ (\nabla_p f) (\tau_3')_{iq} \varphi_{pqa} - (\nabla_p f) (\tau_3')_{aq} \varphi_{pqi} \Big] \\
        &\qquad + (\nabla_i f) (\nabla_a f) - \|\nabla f\|^2 g_{ia}.
    \end{align*}
    One can then check, using $(\pi_{14} \beta)_{ij} = \frac{1}{6} (4 \beta_{ij} + \psi_{ijkl} \beta_{kl})$, that
    \begin{align*}
        \big( \pi_{14} (\tensor[_3]{K}{}) \big){}_{ia} &= - \frac{1}{6} \big( \nabla_p (\tau_3')_{pk} \big) \varphi_{kia} - \frac{1}{2} \Big[ \big( \nabla_p (\tau_3')_{iq} \big) \varphi_{pqa} - \big( \nabla_p (\tau_3')_{aq} \big) \varphi_{pqi} \Big] \\
        &\qquad + \frac{1}{6} (\nabla_p f) (\tau_3')_{pk} \varphi_{kia} + \frac{1}{2} \Big[ (\nabla_p f) (\tau_3')_{iq} \varphi_{pqa} - (\nabla_p f) (\tau_3')_{aq} \varphi_{pqi} \Big].
    \end{align*}

    Similar computations show that
    \begin{align*}
        \big( (T^2)_{14} \big){}_{ia} = - \frac{1}{6} (\nabla_p f) (\tau_3')_{pk} \varphi_{kia} - \frac{1}{2} \Big[ (\nabla_p f) (\tau_3')_{iq} \varphi_{pqa} - (\nabla_p f) (\tau_3')_{aq} \varphi_{pqi} \Big],
    \end{align*}
    and
    \begin{align*}
        \big( \big( (\sserif{P}T) T \big){}_{14} \big){}_{ia} = \frac{1}{3} (\nabla_p f) (\tau_3')_{pk} \varphi_{kia} + \Big[ (\nabla_p f) (\tau_3')_{iq} \varphi_{pqa} - (\nabla_p f) (\tau_3')_{aq} \varphi_{pqi} \Big].
    \end{align*}
     Combining the above expressions, the identity~\eqref{eqn-G2-Bianchi-14} implies~\eqref{eqn-G2-Bianchi-conf-coclosed-14} in the conformally coclosed setting.
\end{proof}

\subsection{The Ricci curvature of a conformally coclosed \texorpdfstring{$G_2$}{G2}-structure} \label{sec:Ricci}

By contracting~\eqref{eqn-G2-Bianchi-a} with $\varphi$, we can recover another form of the $G_2$-Bianchi identity
\begin{align*}
    \Ric_{ia} = (\nabla_p T_{iq} - \nabla_i T_{pq}) \varphi_{pqa} - T_{im} T_{ma} + (\tr T) T_{ia} + T_{pq} T_{ir} \psi_{apqr}.
\end{align*}
Using the fact that $\Ric$ is symmetric, we can also write a more symmetric version of this identity:
\begin{equation} \label{eqn-G2-Bianchi-b-symm}
\begin{aligned}
    \Ric_{ia} &= \frac{1}{2} \Big[ (\nabla_p T_{iq}) \varphi_{pqa} + (\nabla_p T_{aq}) \varphi_{pqi} \Big] - \frac{1}{2} \Big[ (\nabla_i T_{pq}) \varphi_{pqa} + (\nabla_a T_{pq}) \varphi_{pqi} \Big] \\
    &\qquad - \frac{1}{2} \Big[ T_{im} T_{ma} + T_{am} T_{mi} \Big] + \frac{1}{2} (\tr T) \Big[ T_{ia} + T_{ai} \Big] + \frac{1}{2} T_{pq} \Big[ T_{ir} \psi_{apqr} + T_{ar} \psi_{ipqr} \Big].
\end{aligned}
\end{equation}

From~\eqref{eqn-G2-Bianchi-b-symm}, we will now derive expressions for the Ricci curvature and the scalar curvature in the conformally coclosed case, in terms of $\tau_0$, $\tau_3'$, and $f$.

\begin{lem}
    If $\varphi$ is a conformally coclosed $G_2$-structure with $d(e^{-2f} \psi) = 0$, then
    \begin{align}
        \label{eqn-nabla-f-curl-T-conf-coclosed}
        (\nabla_p f) T_{iq} \varphi_{pqa} &= \frac{1}{4} \tau_0 (\nabla_k f) \varphi_{kia} - (\nabla_p f) (\tau_3')_{iq} \varphi_{pqa} + \frac{1}{2} (\nabla_i f) (\nabla_a f) - \frac{1}{2} \|\nabla f\|^2 g_{ia}
    \end{align}
    and
    \begin{align}
        \label{eqn-nabla-f-curl-T^t-conf-coclosed}
        (\nabla_p f) T_{qi} \varphi_{pqa} &= \frac{1}{4} \tau_0 (\nabla_k f) \varphi_{kia} - (\nabla_p f) (\tau_3')_{iq} \varphi_{pqa} - \frac{1}{2} (\nabla_i f) (\nabla_a f) + \frac{1}{2} \|\nabla f\|^2 g_{ia}.
    \end{align}
\end{lem}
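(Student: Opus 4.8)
The plan is to substitute the conformally coclosed expression \eqref{eq:torsion-ccc} for the torsion tensor directly into the left-hand sides and simplify using the standard quadratic contraction identity $\varphi_{pij}\varphi_{pkl} = g_{ik}g_{jl} - g_{il}g_{jk} + \psi_{ijkl}$ from \cite{Kar09}. Everything here is purely algebraic; the only thing needing care is bookkeeping of index order and antisymmetry.

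First I would expand $(\nabla_p f) T_{iq} \varphi_{pqa}$ using $T_{iq} = \tfrac14 \tau_0 g_{iq} - (\tau_3')_{iq} + \tfrac12 (\nabla_r f) \varphi_{riq}$, which produces three terms. The $g_{iq}$-term contracts immediately to $\tfrac14 \tau_0 (\nabla_k f) \varphi_{kia}$, and the $(\tau_3')_{iq}$-term reproduces $-(\nabla_p f)(\tau_3')_{iq}\varphi_{pqa}$ verbatim. For the remaining term $\tfrac12 (\nabla_p f)(\nabla_r f)\varphi_{riq}\varphi_{pqa}$, I would cyclically reorder the two copies of $\varphi$ so the contracted index $q$ occupies the first slot of each, write $\varphi_{riq}\varphi_{pqa} = \varphi_{qri}\varphi_{qap}$, and apply the contraction identity to get $g_{ra}g_{ip} - g_{rp}g_{ia} + \psi_{riap}$. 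Contracting against the symmetric tensor $(\nabla_p f)(\nabla_r f)$ annihilates the $\psi$-term (it is antisymmetric under $p\leftrightarrow r$) and leaves $\tfrac12 (\nabla_i f)(\nabla_a f) - \tfrac12 \|\nabla f\|^2 g_{ia}$. Adding the three contributions gives \eqref{eqn-nabla-f-curl-T-conf-coclosed}.

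For the second identity I would repeat the computation with $T_{qi}$ in place of $T_{iq}$. By symmetry of $g$ and of $\tau_3'$, the first two terms are unchanged; the third becomes $\tfrac12 (\nabla_p f)(\nabla_r f)\varphi_{rqi}\varphi_{pqa}$, and the cyclic reordering now yields $\varphi_{qir}\varphi_{qap} = g_{ia}g_{rp} - g_{ip}g_{ra} + \psi_{irap}$, i.e. the two metric terms have swapped roles relative to the previous case. This flips the signs of the $(\nabla_i f)(\nabla_a f)$ and $\|\nabla f\|^2 g_{ia}$ contributions, producing \eqref{eqn-nabla-f-curl-T^t-conf-coclosed}. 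Equivalently, one may observe from \eqref{eq:torsion-ccc} that $T_{iq} - T_{qi} = (\nabla_r f)\varphi_{riq}$ and deduce the second identity from the first by computing the single contraction $(\nabla_p f)(\nabla_r f)\varphi_{riq}\varphi_{pqa}$.

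There is no real obstacle; the statement is an immediate consequence of \eqref{eq:torsion-ccc} together with the $G_2$ contraction identities. The only subtlety is ensuring the $\psi$-term genuinely drops out each time, which it does purely because $\nabla f \otimes \nabla f$ is symmetric while $\psi$ is totally antisymmetric in the two contracted slots.
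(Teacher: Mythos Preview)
Your proposal is correct and is exactly the approach the paper takes; the paper's proof consists of the single sentence ``These follow easily from~\eqref{eq:torsion-ccc}'', and you have simply written out that substitution explicitly. One minor point: in the convention of~\cite{Kar09} used in this paper the contraction identity carries the opposite sign on the $\psi$-term, namely $\varphi_{pij}\varphi_{pkl} = g_{ik}g_{jl} - g_{il}g_{jk} - \psi_{ijkl}$, but as you correctly observe this term is annihilated by the symmetric factor $(\nabla_p f)(\nabla_r f)$, so the discrepancy is immaterial here.
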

\begin{proof}
These follow easily from~\eqref{eq:torsion-ccc}.
\end{proof}

\begin{propn}
    \label{propn-Ric-R-conf-coclosed}
    If $\varphi$ is a conformally coclosed $G_2$-structure with $d(e^{-2f} \psi) = 0$, then
    \begin{equation} \label{eqn-Ric-conf-coclosed}
    \begin{aligned}
        \Ric_{ia} &= - \frac{1}{2} \Big[ \big( \nabla_p (\tau_3')_{iq} \big) \varphi_{pqa} + \big( \nabla_p (\tau_3')_{aq} \big) \varphi_{pqi} \Big] - \frac{5}{2} (\nabla_i \nabla_a f) - \frac{1}{2} (\Delta f) g_{ia} \\
        &\qquad + \frac{3}{8} \tau_0^2 g_{ia} - \frac{5}{4} \tau_0 (\tau_3')_{ia} - (\tau_3')_{im} (\tau_3')_{ma} - \frac{5}{4} (\nabla_i f) (\nabla_a f) + \frac{5}{4} \|\nabla f\|^2 g_{ia}.
    \end{aligned}
    \end{equation}
    and
    \begin{align}
        \label{eqn-R-conf-coclosed}
        R = - 6 (\Delta f) + \frac{21}{8} \tau_0^2 - \|\tau_3'\|^2 + \frac{15}{2} \|\nabla f\|^2.
    \end{align}
\end{propn}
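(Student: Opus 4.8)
The plan is to insert the conformally coclosed form of the torsion \eqref{eq:torsion-ccc} into the symmetrized $G_2$-Bianchi identity \eqref{eqn-G2-Bianchi-b-symm} and expand everything in terms of $\tau_0$, $\tau_3'$, and $f$, using \eqref{eq:ccc-relations}, the preceding Lemma \eqref{eqn-nabla-f-curl-T-conf-coclosed}--\eqref{eqn-nabla-f-curl-T^t-conf-coclosed}, and the standard quadratic contraction identities for $\varphi$ and $\psi$ from \cite{Kar09, DGK25}, such as $\varphi_{ipq}\varphi_{jpq}=6g_{ij}$, $\varphi_{ijk}\varphi_{abk}=g_{ia}g_{jb}-g_{ib}g_{ja}+\psi_{ijab}$, $\psi_{ijpq}\varphi_{kpq}=-4\varphi_{ijk}$, and the $\psi$--$\psi$ identity. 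The point is that \eqref{eqn-G2-Bianchi-b-symm} is already manifestly symmetric in $i\leftrightarrow a$, so every $\varphi$-valued remainder produced along the way carries a free $\varphi_{kia}$-type factor and is antisymmetric in $i\leftrightarrow a$ (since $\varphi_{kia}+\varphi_{kai}=0$), hence is killed by the symmetrization.

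First I would treat the zeroth-order terms $-\tfrac12(T_{im}T_{ma}+T_{am}T_{mi})$, $\tfrac12(\tr T)(T_{ia}+T_{ai})$ and $\tfrac12 T_{pq}(T_{ir}\psi_{apqr}+T_{ar}\psi_{ipqr})$. Writing $\tr T=\tfrac74\tau_0$ and substituting \eqref{eq:torsion-ccc}, each of these becomes a sum of contractions among $g$, $\tau_3'$, and the skew piece $\tfrac12(\nabla_p f)\varphi_{pij}$, also paired with $\psi$; the identities above (together with the $\psi$--$\psi$ contraction and $g_{pq}\psi_{apqr}=0$) reduce them to terms of type $\tau_0^2 g$, $\tau_0\tau_3'$, $(\tau_3')^2$, $\tau_0(\nabla f)\varphi$, $(\tau_3')(\nabla f)\varphi$, $(\nabla f)(\nabla f)$, and $\|\nabla f\|^2 g$. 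The $\tau_0(\nabla f)\varphi$ pieces are $\varphi$-valued and drop under symmetrization, while the $(\tau_3')(\nabla f)\varphi$ pieces are carried forward, to be cancelled against first-order terms below.

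Next I would reduce the first-order terms. In $\tfrac12[(\nabla_p T_{iq})\varphi_{pqa}+(\nabla_p T_{aq})\varphi_{pqi}]$ I would use $T_{iq}=T_{qi}+(\nabla_r f)\varphi_{riq}$, so that $(\nabla_p T_{iq})\varphi_{pqa}=\tensor[_3]{K}{_i_a}+(\nabla_p\nabla_r f)\varphi_{riq}\varphi_{pqa}+(\nabla_r f)T_{pm}\psi_{mriq}\varphi_{pqa}$, where $\tensor[_3]{K}{_i_a}$ has already been expanded in the proof of Proposition~\ref{propn-G2-Bianchi-conf-coclosed-14}; its symmetrization in $i,a$ annihilates all its $\varphi$-valued and $\nabla\tau_0$ parts and leaves only Hessian, $\Delta f$, $(\nabla f)(\nabla f)$, $\|\nabla f\|^2 g$ terms together with the bracket $-\tfrac12[\nabla_p(\tau_3')_{iq}\varphi_{pqa}+\nabla_p(\tau_3')_{aq}\varphi_{pqi}]$ appearing in \eqref{eqn-Ric-conf-coclosed}. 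The term $(\nabla_p\nabla_r f)\varphi_{riq}\varphi_{pqa}$ collapses via $\varphi_{riq}\varphi_{pqa}=-g_{rp}g_{ia}+g_{ra}g_{ip}+\psi_{ripa}$ (the $\psi$ term dying against the symmetric Hessian), and the $\psi$--$\varphi$ term is handled by the single-index $\psi$--$\varphi$ contraction already used in the proof of Proposition~\ref{propn-G2-Bianchi-conf-coclosed-14}, together with \eqref{eq:ccc-relations} to convert $T_{pm}\varphi_{pmr}$ into $3\nabla_r f$. For $-\tfrac12[(\nabla_i T_{pq})\varphi_{pqa}+(\nabla_a T_{pq})\varphi_{pqi}]$, only the term $\tfrac12(\nabla_r f)\varphi_{rpq}$ of $T_{pq}$ survives the contraction with $\varphi_{pqa}$, and $\nabla_i\big((\nabla_r f)\varphi_{rpq}\big)\varphi_{pqa}=3\nabla_i\nabla_a f+2(\nabla_p f)T_{iq}\varphi_{pqa}$ by $\varphi_{rpq}\varphi_{apq}=6g_{ra}$ and $\psi_{mrpq}\varphi_{apq}=-4\varphi_{mra}$. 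At this point every remaining first-order torsion term has the form $(\nabla_p f)T_{iq}\varphi_{pqa}$ or $(\nabla_p f)T_{qi}\varphi_{pqa}$, and I would replace it using the appropriate one of \eqref{eqn-nabla-f-curl-T-conf-coclosed}--\eqref{eqn-nabla-f-curl-T^t-conf-coclosed}; should any $\nabla_m(\tau_3')_{km}$ or $\nabla_k\tau_0$ remain, it is traded using Proposition~\ref{propn-G2-Bianchi-conf-coclosed-7}. Symmetrizing in $i\leftrightarrow a$ and collecting coefficients yields \eqref{eqn-Ric-conf-coclosed}; the scalar curvature \eqref{eqn-R-conf-coclosed} is then immediate from the $g^{ia}$-trace, since the $\varphi$-contractions with $g$ vanish, $\tr\tau_3'=0$, $(\tau_3')_{im}(\tau_3')_{mi}=\|\tau_3'\|^2$, and $g^{ia}g_{ia}=7$, so that $-\tfrac52\nabla_i\nabla_a f$ and $-\tfrac12(\Delta f)g_{ia}$ combine to $-6\Delta f$ while $-\tfrac54(\nabla_i f)(\nabla_a f)$ and $\tfrac54\|\nabla f\|^2 g_{ia}$ combine to $\tfrac{15}{2}\|\nabla f\|^2$.

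The main obstacle is not conceptual but the volume and precision of the bookkeeping: above all the quadratic term $T_{pq}T_{ir}\psi_{apqr}$ and the two single-index $\psi$--$\varphi$ contractions hiding inside $\tensor[_3]{K}{}$ and inside $(\nabla_p T_{iq})\varphi_{pqa}$, where several $\tau_0(\nabla f)$, $(\tau_3')(\nabla f)$, and $(\nabla f)(\nabla f)$ contributions of opposite sign must cancel exactly, and where the transpose $T\leftrightarrow T^t$ must be tracked carefully to apply the correct Lemma identity. It is precisely Lemma~\ref{lem-conf-coclosed-T(VT)-divT} (in the forms $T(\sserif{V}T)=T^t(\sserif{V}T)$ and $\div T=\div T^t$) that makes these cancellations go through, and no ingredient beyond the identities already recorded is needed.
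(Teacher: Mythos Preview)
Your proposal is correct and follows essentially the same route as the paper: substitute the conformally coclosed torsion~\eqref{eq:torsion-ccc} into the symmetrized Bianchi identity~\eqref{eqn-G2-Bianchi-b-symm}, expand each of the five groups of terms separately using the $\varphi$/$\psi$ contraction identities and the curl Lemma~\eqref{eqn-nabla-f-curl-T-conf-coclosed}--\eqref{eqn-nabla-f-curl-T^t-conf-coclosed}, sum, and trace. Two small remarks: the paper computes $(\nabla_p T_{iq})\varphi_{pqa}$ directly rather than via your $T_{iq}=T_{qi}+(\nabla_r f)\varphi_{riq}$ decomposition and the $\tensor[_3]{K}{}$ expansion, and in fact neither Proposition~\ref{propn-G2-Bianchi-conf-coclosed-7} nor Lemma~\ref{lem-conf-coclosed-T(VT)-divT} is invoked---the $(\tau_3')(\nabla f)\varphi$ contributions from the second and fifth groups cancel directly, and no stray $\nabla\tau_0$ or $\div\tau_3'$ terms survive the symmetrization.
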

\begin{proof}
    This follows from direct computation. We do this term-by-term using~\eqref{eqn-G2-Bianchi-b-symm}. First, we compute
    \begin{align*}
        & \qquad \frac{1}{2} \Big[ (\nabla_p T_{iq}) \varphi_{pqa} + (\nabla_p T_{aq}) \varphi_{pqi} \Big] \\
        &= \frac{1}{2} \Big( \nabla_p \Big[ \frac{1}{4} \tau_0 g_{iq} - (\tau_3')_{iq} + \frac{1}{2} (\nabla_r f) \varphi_{riq} \Big] \Big) \varphi_{pqa} + (i \lrarr a) \\
        &= \frac{1}{8} (\nabla_p \tau_0) \varphi_{pia} - \frac{1}{2} \big( \nabla_p (\tau_3')_{iq} \big) \varphi_{pqa} + \frac{1}{4} (\nabla_p \nabla_r f) \big( g_{ra} g_{ip} - g_{rp} g_{ia} - \psi_{riap} \big) \\
        &\qquad + \frac{1}{4} (\nabla_r f) T_{pm} \psi_{mriq} \varphi_{pqa} + (i \lrarr a) \\
        &= - \frac{1}{2} \big( \nabla_p (\tau_3')_{iq} \big) \varphi_{pqa} + \frac{1}{4} (\nabla_i \nabla_a f) - \frac{1}{4} (\Delta f) g_{ia} \\
        &\qquad + \frac{1}{4} (\nabla_r f) T_{pm} \big( g_{am} \varphi_{pri} - g_{ar} \varphi_{pmi} + g_{ai} \varphi_{pmr} - g_{pm} \varphi_{ari} + g_{pr} \varphi_{ami} - g_{pi} \varphi_{amr} \big) + (i \lrarr a) \\
        &= - \frac{1}{2} \big( \nabla_p (\tau_3')_{iq} \big) \varphi_{pqa} + \frac{1}{4} (\nabla_i \nabla_a f) - \frac{1}{4} (\Delta f) g_{ia} - \frac{1}{4} (\nabla_p f) T_{qa} \varphi_{pqi} - \frac{1}{4} (\nabla_a f) T_{pq} \varphi_{pqi} \\
        &\qquad + \frac{1}{4} (\nabla_r f) T_{pq} \varphi_{pqr} g_{ia} + \frac{1}{4} (\nabla_p f) T_{iq} \varphi_{pqa} + (i \lrarr a)
     \end{align*}
     where the notation ${}+ (i \lrarr a)$ denotes that we are adding the same terms with the indices $i$ and $a$ swapped. Then substituting~\eqref{eqn-nabla-f-curl-T-conf-coclosed} and~\eqref{eqn-nabla-f-curl-T^t-conf-coclosed} into the above, some more computation yields that the first term is
          \begin{align*}
        \frac{1}{2} \Big[ (\nabla_p T_{iq}) \varphi_{pqa} + (\nabla_p T_{aq}) \varphi_{pqi} \Big] &= - \frac{1}{2} \Big[ \big( \nabla_p (\tau_3')_{iq} \big) \varphi_{pqa} + \big( \nabla_p (\tau_3')_{aq} \big) \varphi_{pqi} \Big] \\
        &\qquad + \frac{1}{2} (\nabla_i \nabla_a f) - \frac{1}{2} (\Delta f) g_{ia} - (\nabla_i f) (\nabla_a f) + \|\nabla f\|^2 g_{ia}.
    \end{align*}
    Next, again using~\eqref{eqn-nabla-f-curl-T-conf-coclosed}, we compute
    \begin{align*}
        -\frac{1}{2} \Big[ (\nabla_i T_{pq}) \varphi_{pqa} + (\nabla_a T_{pq}) \varphi_{pqi} \Big] &= - \frac{1}{2} \Big( \nabla_i \Big[ \frac{1}{2} (\nabla_r f) \varphi_{rpq} \Big] \Big) \varphi_{pqa} + (i \lrarr a) \\
        &= - \frac{3}{2} (\nabla_i \nabla_a f) - \frac{1}{4} (\nabla_r f) T_{im} \psi_{mrpq} \varphi_{pqa} + (i \lrarr a) \\
        &= - \frac{3}{2} (\nabla_i \nabla_a f) - (\nabla_p f) T_{iq} \varphi_{pqa} + (i \lrarr a) \\
        &= - 3 (\nabla_i \nabla_a f) + \Big[ (\nabla_p f) (\tau_3')_{iq} \varphi_{pqa} + (\nabla_p f) (\tau_3')_{aq} \varphi_{pqi} \Big] \\
        &\qquad - (\nabla_i f) (\nabla_a f) + \|\nabla f\|^2 g_{ia}.
    \end{align*}
    Thirdly, we have
    \begin{align*}
        - \frac{1}{2} \Big[ T_{im} T_{ma} + T_{am} T_{mi} \Big] &= - \frac{1}{2} \Big[ \frac{1}{4} \tau_0 g_{im} - (\tau_3')_{im} + \frac{1}{2} (\nabla_r f) \varphi_{rim} \Big] \Big[ \frac{1}{4} \tau_0 g_{ma} - (\tau_3')_{ma} + \frac{1}{2} (\nabla_s f) \varphi_{sma} \Big] \\
        &\qquad + (i \lrarr a) \\
        &= - \frac{1}{32} \tau_0^2 g_{ia} - \frac{1}{8} \tau_0 (\nabla_r f) \varphi_{ria} + \frac{1}{4} \tau_0 (\tau_3')_{ia} \\
        &\qquad - \frac{1}{2} (\tau_3')_{im} (\tau_3')_{ma} + \frac{1}{4} \Big[ (\nabla_p f) (\tau_3')_{iq} \varphi_{pqa} - (\nabla_p f) (\tau_3')_{aq} \varphi_{pqi} \Big] \\
        &\qquad - \frac{1}{8} (\nabla_r f) (\nabla_s f) \big( g_{ra} g_{is} - g_{rs} g_{ia} - \psi_{rias} \big) + (i \lrarr a) \\
        &= -\frac{1}{16} \tau_0^2 g_{ia} + \frac{1}{2} \tau_0 (\tau_3')_{ia} - (\tau_3')_{im} (\tau_3')_{ma} - \frac{1}{4} (\nabla_i f) (\nabla_a f) + \frac{1}{4} \|\nabla f\|^2 g_{ia}.
    \end{align*}
    The fourth term becomes
    \begin{align*}
        \frac{1}{2} (\tr T) \Big[ T_{ia} + T_{ai} \Big] &= \frac{7}{8} \tau_0 \Big[ \frac{1}{4} \tau_0 g_{ia} - (\tau_3')_{ia} + \frac{1}{2} (\nabla_r f) \varphi_{ria} \Big] + (i \lrarr a) \\
        &= \frac{7}{16} \tau_0^2 g_{ia} - \frac{7}{4} \tau_0 (\tau_3')_{ia}.
    \end{align*}
    Finally, again using ~\eqref{eqn-nabla-f-curl-T-conf-coclosed}, the fifth term is
    \begin{align*}
        \frac{1}{2} T_{pq} \Big[ T_{ir} \psi_{apqr} + T_{ar} \psi_{ipqr} \Big] &= \frac{1}{2} \Big[ \frac{1}{2} (\nabla_m f) \varphi_{mpq} \Big] T_{ir} \psi_{apqr} + (i \lrarr a) \\
        &= (\nabla_p f) T_{iq} \varphi_{pqa} + (i \lrarr a) \\
        &= - \Big[ (\nabla_p f) (\tau_3')_{iq} \varphi_{pqa} + (\nabla_p f) (\tau_3')_{aq} \varphi_{pqi} \Big] \\
        &\qquad + (\nabla_i f) (\nabla_a f) - \|\nabla f\|^2 g_{ia}.
    \end{align*}
    Summing the previous five equations yields~\eqref{eqn-Ric-conf-coclosed} and tracing this in turn gives~\eqref{eqn-R-conf-coclosed}.
\end{proof}

\begin{rmk}
    We note that all the results throughout Section~\ref{sect-curvature} hold for coclosed $G_2$-structures as well. In this case, we simply have that $f$ is constant and so $\nabla f \equiv 0$.
\end{rmk}

\section{Modified \texorpdfstring{$G_2$}{G2}-anomaly flows}
\label{sect-modified-G2-Anomaly-flows}

In Section~\ref{sect-lifting} we derived a flow of conformally coclosed $G_2$-structures by considering an ansatz of the form $M^7= S^1 \times X^6$ and moving between complex geometry on $X^6$ and $G_2$-geometry on $M^7$. We now turn to developing the general theory of this flow on a general $G_2$-manifold $M^7$. In this section, we examine the fixed points of the flow and determine the evolution of the metric.

\subsection{Notation and setup} \label{sect-notation-setup}
Fix a reference volume form $\vol_R$ on $M^7$. For all $G_2$-structures $\varphi$ on $M^7$, we associate a scalar function $f_{\varphi,R} \in C^\infty(M^7,\mathbb{R})$ by the formula
\begin{align}
\label{eqn-dilaton}
    e^{4f} = \frac{\vol_{\varphi}}{\vol_R}.
\end{align}
Here and from now on, we drop the subscript and simply write $f=f_{\varphi,R}$. The function $f$ is sometimes called the dilaton function.

Next, fix a constant $C \in \bb{R}$. For all $G_2$-structures $\varphi$, we associate a $3$-form $H_C \in \Omega^3(M^7)$ by
\begin{align*}
    H_C = -e^{2f} d^* (e^{-2f} \psi) - (C-2) \big( (\tr T) \varphi \big).
\end{align*}
Our seemingly mysterious choice to write the coefficient of $(\tr T) \varphi$ in $H_C$ as $-(C-2)$ is so that the coefficient of $e^{8f} (\div X) \varphi$ in equation~\eqref{eq:DP-final} below is precisely $C$. In particular for $C = \frac{4}{3}$, we recover the distinguished skew-symmetric torsion tensor $H$ from~\eqref{eq:H}.

With this notation, the dilaton coflow derived in Remark~\ref{lift-modifiedaf} takes the form
\begin{align}
\label{eqn-G2-Anomaly-flow}
    \del_t (e^{-2f} \psi) &= -dH_C,
\end{align}
with initial data satisfying $d  (e^{-2f_0} \psi_0) = 0$. As the right hand side of the above equation is exact, we see that this flow preserves the conformally coclosed condition $d(e^{-2f} \psi)=0$. We note that coclosed structures with $d \psi_0 =0$ can be used as initial data upon when choosing the reference volume as $\vol_R = \vol_{\psi_0}$, and once the flow starts the closedness condition will morph to $d(e^{-2f_{\varphi,\varphi_0}} \psi)=0$.

Since, as we saw in~\eqref{eq:torsion-ccc}, the torsion tensor $T$ of a coclosed structure is given by
\begin{align*}
    T = \frac{1}{4} \tau_0 g - \tau_3' + \frac{1}{2} (\nabla f) \hook \varphi, 
\end{align*}
we can expand the expression for $H_C$ and use~\eqref{eq:torsion-forms} and~\eqref{eq:27-hook} to get
\begin{align*}
\label{eqn-H_C}
    H_C &= -e^{2f} d^* (e^{-2f} \psi) - (C-2) \big( (\tr T) \varphi \big) \\
    &= -e^{2f} \star d (e^{-2f} \varphi) - \frac{7}{4} (C-2) \tau_0 \varphi \\
    &= 2 \star (df \wedge \varphi) - \star d\varphi - \frac{7}{4} (C-2) \tau_0 \varphi \\
    &= - 2 (\nabla f) \hook \psi - \star \Big( \tau_0 \psi + \frac{3}{2} df \wedge \varphi + \star \tau_3 \Big) - \frac{7}{4} (C-2) \tau_0 \varphi \\
    &= - \frac{7}{4} \Big( C - \frac{10}{7} \Big) \tau_0 \varphi - \tau_3 - \frac{1}{2} (\nabla f) \hook \psi \\
    &= - \Big[ \frac{7}{12} \Big( C - \frac{10}{7} \Big) \tau_0 g + \tau_3' - \frac{1}{6} (\nabla f) \hook \varphi \Big] \diamond \varphi. \numberthis
\end{align*}

\begin{rmk} \label{rmk-torsionconnection}
The definition of $H$ from~\eqref{eq:H} may seem mysterious, but it arises naturally in physics~\cite{dlOMS2018, AMP24} and it can be mathematically motivated as follows. If $\varphi$ is a $G_2$-structure with $\tau_2 = 0$ (called \emph{integrable} by some authors or \emph{$G_2$-structures with torsion} by others) then there exists a unique connection $\nabla^H$ which is compatible with $\varphi$ (and hence metric compatible) that has totally skew-symmetric torsion $H$, which is given by
    \begin{align} \label{eq:first-H}
        H = \frac{1}{6} \tau_0 \varphi - \tau_3 - (\tau_1)^\sharp \hook \psi.
    \end{align}
(See~\cite[Theorem 4.8]{FI02} for more details.) If $\varphi$ is conformally coclosed with $d (e^{-2 f} \psi) = 0$, then $H$ as above corresponds precisely to~\eqref{eq:H}, using $\tau_1 = \frac{1}{2}f$ and~\eqref{eq:27-hook}. It also corresponds to $H_C$ in~\eqref{eqn-H_C} when $C = \frac{4}{3}$.
\end{rmk}

\begin{rmk}
    The introduction of the parameter $C$ allows us to study an entire family of flows similar to others already found in the literature:
    \begin{itemize} \setlength\itemsep{-1mm}
        \item setting $C = 0$ gives a conformally coclosed version of Grigorian's modified Laplacian coflow~\cite{Gri13} (where Grigorian's additional parameter $A$ has been set to zero);
        \item setting $C = \frac{4}{3}$ recovers the case distinguished in~\cite{AMP24}. In particular, fixed points of this flow would be special cases of integrable ($\tau_2 = 0$) $G_2$-structures with closed torsion $dH = 0$ (these are also known as strong $G_2$-structures with torsion)~\cite{FF25,FMMR24,IS23};
        \item setting $C = 2$ yields a conformally coclosed version of the Laplacian coflow~\cite{KMP12}. 
    \end{itemize}
\end{rmk}

We adopt the nomenclature of referring to the $C = \frac{4}{3}$ case as the $G_2$-anomaly flow with $\alpha'=0$, as this choice of $C$ is consistent with the equations of string theory~\cite{AMP24}. In line with~\cite{Gri13}, we refer to the the general $C$ case as \emph{modified} $G_2$-anomaly flows.

\subsection{Fixed points of the flow}
\label{subsect-fixed-points}

From~\eqref{eqn-G2-Anomaly-flow} and~\eqref{eqn-H_C}, we see that fixed points of the modified $G_2$-anomaly flow are conformally coclosed $G_2$-structures that satisfy
\begin{align*}
    -dH_C = d (B \diamond \varphi) = 0,
\end{align*}
where $B$ is the tensor
\begin{align}
\label{eqn-B}
    B_{ia} = \Big[ \frac{7}{12} \Big( C - \frac{10}{7} \Big) \tau_0 g_{ia} + (\tau_3')_{ia} - \frac{1}{6} (\nabla_p f) \varphi_{pia} \Big].
\end{align}
By writing
$$ d (B \diamond \varphi) = S \diamond \psi $$
and projecting this condition onto its $1$-, $7$-, and $27$-components (see Appendix~\ref{app-decomp-exact-4-forms}), we can obtain further properties of these fixed points.

Recall that the torsion tensor $T$ in this setting is given by
\begin{align} \label{eq:T-v2}
    T_{ia} = \frac{1}{4} \tau_0 g_{ia} - (\tau_3')_{ia} + \frac{1}{2} (\nabla_p f) \varphi_{pia}.
\end{align}

In order to project $S$ onto the $1$-component, we compute from~\eqref{eqn-B} and~\eqref{eq:T-v2} that
\begin{equation} \label{eq:trS-temp}
\begin{aligned}
    -\frac{1}{2} (\nabla_m B_{pq}) \varphi_{mpq} &= \frac{1}{2} (\Delta f) - \|\nabla f\|^2, \\
    \frac{1}{2} B_{mm} T_{kk} &= \frac{343}{96} \Big( C - \frac{10}{7} \Big) \tau_0^2, \\
    - \frac{1}{2} B_{pq} T_{qp} &= - \frac{49}{96} \Big( C - \frac{10}{7} \Big) \tau_0^2 + \frac{1}{2} \|\tau_3'\|^2 - \frac{1}{4} \|\nabla f\|^2, \\
    \frac{1}{4} B_{pq} T_{rs} \psi_{pqrs} &= \frac{1}{2} \|\nabla f\|^2.
\end{aligned}
\end{equation}
Substituting the above expressions into equation~\eqref{eqn-decomp-exact-4-form-1}, the fixed point condition $\tr S = 0$ becomes
\begin{align}
\label{eqn-fixed-pt-G2-Anomaly-flow-1}
    \tr S = \frac{1}{2} (\Delta f) - \frac{3}{4} \|\nabla f\|^2 + \frac{1}{2} \|\tau_3'\|^2 + \frac{49}{16} \Big( C - \frac{10}{7} \Big) \tau_0^2 = 0.
\end{align}

If $C > \frac{10}{7}$, then we have
\begin{align*}
    \Delta f \leq \frac{3}{2} \|\nabla f\|^2
\end{align*}
so by the maximum principle, $f$ must be constant. (This follows from the fact that $e^{-\frac{3}{2} f}$ is subharmonic.) It further follows in this case from~\eqref{eqn-fixed-pt-G2-Anomaly-flow-1} that both $\tau_3'$ and $\tau_0$ must vanish and $\varphi$ must be torsion-free.

(We remark that if $C = \frac{10}{7}$, the above analysis still holds, except that $\tau_0$ need not vanish, and so $\varphi$ may be torsion-free or only nearly parallel, because $\tau_0$ is then constant by~\eqref{eqn-G2-Bianchi-conf-coclosed-7}.)

To consider the vanishing of the $7$-component $S_7$ of $S$, one first computes using~\eqref{eqn-B} and~\eqref{eq:T-v2} that
\begin{align*}
    - \frac{1}{6} \nabla_m B_{km} &= - \frac{7}{72} \Big( C - \frac{10}{7} \Big) \nabla_k \tau_0 - \frac{1}{6} \nabla_m (\tau_3')_{km}, \\
    \frac{1}{6} (\nabla_k B_{mm}) &= \frac{49}{72} \Big( C - \frac{10}{7} \Big) \nabla_k \tau_0, \\
    - \frac{1}{12} (\nabla_m B_{pq}) \psi_{mpqk} &= \frac{1}{12} \tau_0 (\nabla_k f) + \frac{1}{18} (\nabla_m f) (\tau_3')_{km},
\end{align*}
\begin{align*}
    \frac{1}{12} B_{mm} T_{pq} \varphi_{pqk} &= \frac{49}{48} \Big( C - \frac{10}{7} \Big) \tau_0 (\nabla_k f), \\
    \frac{1}{12} T_{mm} B_{pq} \varphi_{pqk} &= - \frac{7}{48} \tau_0 (\nabla_k f), \\
    - \frac{1}{12} B_{pm} T_{mq} \varphi_{pqk} &= - \frac{7}{48} \Big( C - \frac{10}{7} \Big) \tau_0 (\nabla_k f) + \frac{1}{48} \tau_0 (\nabla_k f) + \frac{1}{18} (\nabla_m f) (\tau_3')_{km} \\
    - \frac{1}{12} T_{pm} B_{mq} \varphi_{pqk} &= - \frac{7}{48} \Big( C - \frac{10}{7} \Big) \tau_0 (\nabla_k f) + \frac{1}{48} \tau_0 (\nabla_k f) + \frac{1}{18} (\nabla_m f) (\tau_3')_{km} \\
    \frac{1}{12} B_{pq} \varphi_{pqm} T_{km} &= -\frac{1}{48} \tau_0 (\nabla_k f) + \frac{1}{12} (\nabla_m f) (\tau_3')_{km}, \\
    \frac{1}{12} T_{pq} \varphi_{pqm} B_{km} &= \frac{7}{48} \Big( C - \frac{10}{7} \Big) \tau_0 (\nabla_k f) + \frac{1}{4} (\nabla_m f) (\tau_3')_{km}.
\end{align*}

Substituting the above expressions into equation~\eqref{eqn-decomp-exact-4-form-7}, and using the fact that $X \mapsto X \hook \varphi$ is an isomorphism, some computation yields that the fixed point condition $S_7 = 0$ becomes
\begin{align*}
     \frac{1}{6} (S_7)_{ia} \varphi_{iak} = \Big( \frac{7}{12} C - \frac{5}{6} \Big) (\nabla_k \tau_0) + \Big( \frac{7}{8} C - \frac{31}{24} \Big) \tau_0 (\nabla_k f) + \frac{1}{6} \Big( 3 (\nabla_m f) (\tau_3')_{mk} - \nabla_m (\tau_3')_{km} \Big) = 0.
\end{align*}
Further substituting above the consequence of the $G_2$-Bianchi identity~\eqref{eqn-G2-Bianchi-conf-coclosed-7} gives
\begin{align}
\label{eqn-fixed-pt-G2-Anomaly-flow-7}
    \frac{1}{6} (S_7)_{ia} \varphi_{iak} = \frac{7}{12} (C - 1) (\nabla_k \tau_0) + \frac{7}{8} \Big( C - \frac{4}{3} \Big) \tau_0 (\nabla_k f) = 0.
\end{align}
From this, we see that if $C = \frac{4}{3}$, then $\tau_0$ must be constant.

We note that the condition~\eqref{eqn-fixed-pt-G2-Anomaly-flow-7} can also be rewritten as
\begin{align}
\label{eqn-fixed-pt-7}
    \tau_0 = K \exp \Big( - \frac{3 (C - \frac{4}{3})}{2 (C-1)} f \Big)
\end{align}
for some constant $K$ provided that $C \neq 1$.

Finally, we can compute the $27$-component $S_{27}$ of $S$. First, one computes that
\begin{align*}
    - \frac{1}{4} \big( (\nabla_i B_{pq}) \varphi_{pqa} + (\nabla_a B_{pq}) \varphi_{pqi} \big) &= \frac{1}{2} (\nabla_i \nabla_a f) + \frac{1}{6} (\nabla_i f) (\nabla_a f) - \frac{1}{6} \|\nabla f\|^2 g_{ia} \\
    &\qquad - \frac{1}{6} \big( (\nabla_p f) (\tau_3')_{iq} \varphi_{pqa} + (\nabla_p f) (\tau_3')_{aq} \varphi_{pqi} \big)
\end{align*}
and
\begin{align*}
    \frac{1}{4} \Big( \big( \nabla_p (B_{iq} + B_{qi}) \big) \varphi_{pqa} + \big( \nabla_p (B_{aq} + B_{qa}) \big) \varphi_{pqi} \Big) = \frac{1}{2} \Big( \big( \nabla_p (\tau_3')_{iq} \big) \varphi_{pqa} + \nabla_p (\tau_3')_{aq} \big) \varphi_{pqi} \Big).
\end{align*}

More computations yield
\begin{align*}
    \frac{1}{4} B_{mm} \big( T_{ia} + T_{ai} \big) &= \frac{49}{96} \Big( C - \frac{10}{7} \Big) \tau_0^2 g_{ia} - \frac{49}{24} \Big( C - \frac{10}{7} \Big) \tau_0 (\tau_3')_{ia}, \\
    - \frac{1}{4} \big( B_{im} T_{ma} + B_{am} T_{mi} \big) &= \frac{1}{24} (\nabla_i f) (\nabla_a f) - \frac{1}{24} \|\nabla f\|^2 g_{ia} \\
    &\qquad - \frac{7}{96} \Big( C - \frac{10}{7} \Big) \tau_0^2 g_{ia} + \frac{1}{2} (\tau_3')_{im} (\tau_3')_{ma} \\
    &\qquad - \frac{1}{12} \big( (\nabla_p f) (\tau_3')_{iq} \varphi_{pqa} + (\nabla_p f) (\tau_3')_{aq} \varphi_{pqi} \big) \\
    &\qquad + \frac{7}{24} \Big( C - \frac{10}{7} \Big) \tau_0 (\tau_3')_{ia} - \frac{1}{8} \tau_0 (\tau_3')_{ia},
\end{align*}
\begin{align*}
    \frac{1}{4} T_{mm} \big( B_{ia} + B_{ai} \big) &= \frac{49}{96} \Big( C - \frac{10}{7} \Big) \tau_0^2 g_{ia} + \frac{7}{8} \tau_0 (\tau_3')_{ia}, \\
    - \frac{1}{4} \big( T_{im} B_{ma} + T_{am} B_{mi} \big) &= \frac{1}{24} (\nabla_i f) (\nabla_a f) - \frac{1}{24} \|\nabla f\|^2 g_{ia} \\
    &\qquad - \frac{7}{96} \Big( C - \frac{10}{7} \Big) \tau_0^2 g_{ia} + \frac{1}{2} (\tau_3')_{im} (\tau_3')_{ma} \\
    &\qquad + \frac{1}{12} \big( (\nabla_p f) (\tau_3')_{iq} \varphi_{pqa} + (\nabla_p f) (\tau_3')_{aq} \varphi_{pqi} \big) \\
    &\qquad + \frac{7}{24} \Big( C - \frac{10}{7} \Big) \tau_0 (\tau_3')_{ia} - \frac{1}{8} \tau_0 (\tau_3')_{ia},
\end{align*}
and
\begin{align*}
    \frac{1}{4} B_{pq} \big( \psi_{pqim} T_{ma} + \psi_{pqam} T_{mi} \big) &= \frac{1}{6} (\nabla_i f) (\nabla_a f) - \frac{1}{6} \|\nabla f\|^2 g_{ia} \\
    &\qquad + \frac{1}{6} \big( (\nabla_p f) (\tau_3')_{iq} \varphi_{pqa} + (\nabla_p f) (\tau_3')_{aq} \varphi_{pqi} \big), \\
    \frac{1}{4} T_{pq} \big( \psi_{pqim} B_{ma} + \psi_{pqam} B_{mi} \big) &= \frac{1}{6} (\nabla_i f) (\nabla_a f) - \frac{1}{6} \|\nabla f\|^2 g_{ia} \\
    &\qquad + \frac{1}{2} \big( (\nabla_p f) (\tau_3')_{iq} \varphi_{pqa} + (\nabla_p f) (\tau_3')_{aq} \varphi_{pqi} \big), \\
        - \frac{1}{4} B_{pq} T_{rs} \big( \varphi_{pri} \varphi_{qsa} + \varphi_{pra} \varphi_{qsi} \big) &= \frac{5}{12} (\nabla_i f)(\nabla_a f) - \frac{1}{6} \|\nabla f\|^2 g_{ia} \\
    &\qquad - \frac{7}{16} \Big( C - \frac{10}{7} \Big) \tau_0^2 g_{ia} + \frac{1}{2} (\tau_3')_{pq} (\tau_3')_{rs} \varphi_{pri} \varphi_{qsa} \\
    &\qquad - \frac{7}{24} \Big( C - \frac{10}{7} \Big) \tau_0 (\tau_3')_{ia} + \frac{1}{8} \tau_0 (\tau_3')_{ia}.
\end{align*}

Substituting all of the above expressions, as well as those from~\eqref{eq:trS-temp}, into equation~\eqref{eqn-decomp-exact-4-form-27} and combining terms, the fixed point condition $S_{27} = 0$ becomes
\begin{align*}
\label{eqn-fixed-pt-G2-Anomaly-flow-27}
    S_{27} &= \frac{1}{2} (\nabla_i \nabla_a f) - \frac{1}{14} (\Delta f) g_{ia} + \frac{1}{2} \Big( \big(\nabla_p (\tau_3')_{iq} \big) \varphi_{pqa} + \big( \nabla_p (\tau_3')_{aq} \big) \varphi_{pqi} \Big) \\
    &\qquad + (\nabla_i f) (\nabla_a f) - \frac{1}{7} \|\nabla f\|^2 g_{ia} \\
    &\qquad + (\tau_3')_{im} (\tau_3')_{ma} + \frac{1}{2} (\tau_3')_{pq} (\tau_3')_{rs} \varphi_{pri} \varphi_{qsa} - \frac{1}{14} \|\tau_3'\|^2 g_{ia} \\
    &\qquad + \frac{1}{2} \big( (\nabla_p f) (\tau_3')_{iq} \varphi_{pqa} + (\nabla_p f) (\tau_3')_{aq} \varphi_{pqi} \big) - \frac{7}{4} \Big( C - \frac{13}{7} \Big) \tau_0 (\tau_3')_{ia} = 0. \numberthis
\end{align*}

In summary, in this section we have established the following:
\begin{thm}
\label{thm-fixed-pts-G2-Anomaly-flow}
    The fixed points $\varphi$ of the modified $G_2$-anomaly flow
    \begin{align}
        \del_t (e^{-2f} \psi) = -dH_C = d \big( e^{2f} d^* (e^{-2f} \psi) \big) + (C-2) d \big( (\tr T) \varphi \big). \tag{\ref{eqn-G2-Anomaly-flow}}
    \end{align}
    satisfy the following:
    \begin{itemize} \setlength\itemsep{-1mm}
        \item if $C \neq 1$, then
        \begin{align*}
            \tau_0 = K \exp \Big( - \frac{3 (C - \frac{4}{3})}{2 (C-1)} f \Big)
        \end{align*}
        for some constant $K$;

        \item if $C = \frac{4}{3}$, then $\tau_0$ is constant;

        \item if $C = \frac{10}{7}$, then $f$ is constant and $\varphi$ is nearly parallel or torsion-free;

        \item if $C > \frac{10}{7}$, then $f$ is constant and $\varphi$ is torsion-free.
    \end{itemize}
\end{thm}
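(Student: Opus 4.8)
The plan is to convert the fixed-point equation $-dH_C=0$ into a finite list of pointwise conditions by exploiting the $G_2$-type decomposition of exact $4$-forms. First I would substitute the expression~\eqref{eqn-H_C} for $H_C$ to rewrite the fixed-point condition as $d(B\diamond\varphi)=0$, with $B$ the $2$-tensor in~\eqref{eqn-B}. Since $d(B\diamond\varphi)$ is closed (being exact), I would write $d(B\diamond\varphi)=S\diamond\psi$ for a uniquely determined $2$-tensor $S$, using the componentwise formulas for exact $4$-forms recorded in Appendix~\ref{app-decomp-exact-4-forms}. The fixed-point condition is then equivalent to $S=0$, and since $S$ decomposes as $S_1\oplus S_7\oplus S_{27}$, I would analyze each piece separately.

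For the trace part, I would contract the relevant appendix formula and, using the torsion expression~\eqref{eq:T-v2} together with the conformally coclosed relations~\eqref{eq:ccc-relations}, compute each of the four contributing terms (those collected in~\eqref{eq:trS-temp}); summing gives the scalar condition $\tfrac12\Delta f-\tfrac34\|\nabla f\|^2+\tfrac12\|\tau_3'\|^2+\tfrac{49}{16}(C-\tfrac{10}{7})\tau_0^2=0$. For the $7$-part I would similarly expand all the $\varphi$-contracted terms in the appendix formula, then substitute the $G_2$-Bianchi consequence~\eqref{eqn-G2-Bianchi-conf-coclosed-7} from Proposition~\ref{propn-G2-Bianchi-conf-coclosed-7} to cancel the $\nabla(\tau_3')$ and $(\nabla f)\tau_3'$ contributions, arriving at $\tfrac{7}{12}(C-1)\nabla_k\tau_0+\tfrac78(C-\tfrac43)\tau_0\nabla_k f=0$. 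The $27$-part would be computed in the same fashion and recorded for completeness, but it plays no role in the stated conclusions.

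With these identities in hand the assertions follow quickly. When $C\neq1$ the $7$-condition rearranges to $\nabla_k\tau_0=-\tfrac{3(C-4/3)}{2(C-1)}\tau_0\,\nabla_k f$, which is exact with integrating factor $e^{\frac{3(C-4/3)}{2(C-1)}f}$, giving $\tau_0=K\exp\!\big(-\tfrac{3(C-4/3)}{2(C-1)}f\big)$; specializing $C=\tfrac43$ kills the $\tau_0\nabla f$ term and forces $\nabla\tau_0=0$, so $\tau_0$ is constant. For the last two bullets I would use the trace condition: if $C\ge\tfrac{10}{7}$ the final term is nonnegative, so $\Delta f\le\tfrac32\|\nabla f\|^2$, whence $u:=e^{-3f/2}$ satisfies $\Delta u\ge0$ and is therefore constant by the maximum principle on the compact $M^7$; thus $f$ is constant. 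Feeding $\nabla f\equiv0$ back in gives $\tfrac12\|\tau_3'\|^2+\tfrac{49}{16}(C-\tfrac{10}{7})\tau_0^2=0$, so $\tau_3'\equiv0$ in all cases, $\tau_0\equiv0$ when $C>\tfrac{10}{7}$ (torsion-free), and $\tau_0$ merely constant when $C=\tfrac{10}{7}$ (nearly parallel or torsion-free, the constancy of $\tau_0$ coming from~\eqref{eqn-G2-Bianchi-conf-coclosed-7} once $f$ is constant).

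The main obstacle is purely computational: assembling $S_1$, $S_7$, $S_{27}$ requires lengthy bookkeeping of many $\varphi$- and $\psi$-contractions of the quadratic torsion terms ($B\,T$, $T\,B$, $B\,\psi\,T$, $B\,T\,\varphi\,\varphi$, and so on), each reduced by the standard $G_2$ contraction identities, and organizing these so that the $\nabla f$ and $\tau_3'$ pieces cancel against the Bianchi identity is the delicate point. No new idea beyond the decomposition-plus-Bianchi strategy is required, but accurate tracking of coefficients is essential — in particular keeping the factor $\tfrac{7}{12}(C-\tfrac{10}{7})$ in $B$ correct so that the thresholds $C=1$, $C=\tfrac43$, $C=\tfrac{10}{7}$ emerge with the meaning claimed.
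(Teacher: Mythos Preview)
Your proposal is correct and follows essentially the same approach as the paper: decompose $d(B\diamond\varphi)=S\diamond\psi$ via the appendix formulas, extract the scalar condition~\eqref{eqn-fixed-pt-G2-Anomaly-flow-1} and the $7$-condition~\eqref{eqn-fixed-pt-G2-Anomaly-flow-7} (the latter after substituting the Bianchi consequence~\eqref{eqn-G2-Bianchi-conf-coclosed-7}), then read off the conclusions exactly as you describe, including the $e^{-3f/2}$ maximum-principle step. The paper likewise computes $S_{27}$ explicitly but remarks that it yields no further information about fixed points.
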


\begin{rmk} \label{rmk:fixed-points}
  Theorem~\ref{thm-fixed-pts-G2-Anomaly-flow} establishes most of Theorem~\ref{thm:fixed-points-intro}, except that $(g,H,f)$ is a generalized Ricci soliton when $C=\frac{4}{3}$. This will be shown in Corollary~\ref{susy2eom} below.
\end{rmk}

We note that we can recover analogous results~\cite{FF25} for the (modified) Laplacian coflow
\begin{align}
\label{eqn-Laplacian-coflow}
    \del_t \psi =  d d^* \psi + (C-2) d \big( (\tr T) \varphi \big).
\end{align}
for coclosed $G_2$-structures by setting $f \equiv 0$ throughout. In particular, the $1$-component condition for a fixed point~\eqref{eqn-fixed-pt-G2-Anomaly-flow-1} becomes
\begin{align}
\label{eqn-fixed-pt-Laplacian-coflow-1}
    \frac{1}{2} \|\tau_3'\|^2 + \frac{49}{16} \Big( C - \frac{10}{7} \Big) \tau_0^2 = 0
\end{align}
and the $7$-component condition~\eqref{eqn-fixed-pt-G2-Anomaly-flow-7} for a fixed point becomes
\begin{align}
\label{eqn-fixed-pt-Laplacian-coflow-7}
    \frac{7}{12} (C-1) (\nabla_k \tau_0) = 0.
\end{align}

If $C \neq 1$, then~\eqref{eqn-fixed-pt-Laplacian-coflow-7} says $\tau_0$ must be constant which in turn from~\eqref{eqn-fixed-pt-Laplacian-coflow-1} implies that $\tau_3'$ (and thus also $\tau_3$) must have constant norm. One can also check in this case that
\begin{align*}
    0 = d^2 \varphi = d (\tau_0 \psi + \star \tau_3) = d \star \tau_3 \implies d^* \tau_3 = 0.
\end{align*}
Moreover, from~\eqref{eq:T-v2} and~\eqref{eqn-G2-Bianchi-conf-coclosed-7} we obtain $\div T = 0$.

\begin{propn}[\cite{FF25} Proposition 8.3]
\label{propn-fixed-pts-Laplacian-coflow}
    The fixed points $\varphi$ of the modified Laplacian flow~\eqref{eqn-Laplacian-coflow} satisfy the following:
    \begin{itemize} \setlength\itemsep{-1mm}
        \item if $C \neq 1$, then $\tau_0$ is constant, $\div T = 0$, and $\tau_3$ has constant norm and is coclosed;
        
        \item if $C = \frac{10}{7}$, then $\varphi$ is nearly parallel or torsion-free;

        \item if $C > \frac{10}{7}$, then $\varphi$ is torsion-free.
    \end{itemize}
\end{propn}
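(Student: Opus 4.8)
The plan is to obtain Proposition~\ref{propn-fixed-pts-Laplacian-coflow} by specializing the fixed-point analysis of Section~\ref{subsect-fixed-points} to the coclosed case $f \equiv 0$, and then to extract the two extra facts ($\div T = 0$ and $d^* \tau_3 = 0$) from the $G_2$-Bianchi identity and from $d^2 \varphi = 0$. For a coclosed $G_2$-structure one automatically has $\tau_1 = \tau_2 = 0$, so a fixed point $\varphi$ of~\eqref{eqn-Laplacian-coflow} is, exactly as in the derivation leading to~\eqref{eqn-fixed-pt-Laplacian-coflow-1}, a solution of $d(B \diamond \varphi) = 0$ with $B_{ia} = \tfrac{7}{12}\big(C - \tfrac{10}{7}\big)\tau_0 g_{ia} + (\tau_3')_{ia}$, i.e. the tensor~\eqref{eqn-B} with the $\nabla f$ terms dropped. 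Writing $d(B \diamond \varphi) = S \diamond \psi$ and projecting onto the $1$-, $7$-, and $27$-components via Appendix~\ref{app-decomp-exact-4-forms}, the $1$- and $7$-component conditions reduce from~\eqref{eqn-fixed-pt-G2-Anomaly-flow-1} and~\eqref{eqn-fixed-pt-G2-Anomaly-flow-7} to~\eqref{eqn-fixed-pt-Laplacian-coflow-1} and~\eqref{eqn-fixed-pt-Laplacian-coflow-7} respectively, while the $27$-component contributes nothing needed for the statement.

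Next I run the case analysis. If $C \neq 1$, then~\eqref{eqn-fixed-pt-Laplacian-coflow-7} gives $\nabla \tau_0 = 0$, so $\tau_0$ is constant. Plugging $\nabla \tau_0 = 0$ and $f \equiv 0$ into the Bianchi consequence~\eqref{eqn-G2-Bianchi-conf-coclosed-7} yields $\nabla_m (\tau_3')_{km} = 0$, and since here $T = \tfrac14 \tau_0 g - \tau_3'$ by~\eqref{eq:T-v2}, this gives $\div T = 0$. The $1$-component condition~\eqref{eqn-fixed-pt-Laplacian-coflow-1} now reads $\|\tau_3'\|^2 = -\tfrac{49}{8}\big(C - \tfrac{10}{7}\big)\tau_0^2$, whose right-hand side is constant, so $\tau_3'$ — and hence $\tau_3 = \tau_3' \diamond \varphi$ — has constant norm. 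Finally, since $\tau_0$ is constant and $d\psi = 0$, applying $d$ to $d\varphi = \tau_0 \psi + \star \tau_3$ gives $0 = d^2 \varphi = d \star \tau_3$, i.e. $d^* \tau_3 = 0$. For the dichotomy I use $\|\tau_3'\|^2 \geq 0$ and $\tau_0^2 \geq 0$ in $\|\tau_3'\|^2 = -\tfrac{49}{8}(C - \tfrac{10}{7})\tau_0^2$: when $C = \tfrac{10}{7}$ this forces $\tau_3' = 0$, so $d\varphi = \tau_0 \psi$ with $\tau_0$ constant, which is nearly parallel if $\tau_0 \neq 0$ and torsion-free if $\tau_0 = 0$; when $C > \tfrac{10}{7}$ the right-hand side is nonpositive, forcing both $\tau_3' = 0$ and $\tau_0 = 0$, hence (with $\tau_1 = \tau_2 = 0$) $\varphi$ torsion-free. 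This recovers~\cite[Proposition 8.3]{FF25}.

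I do not expect a genuine obstacle here: every identity used is a routine specialization ($f \equiv 0$) of a formula already proved in Sections~\ref{sect-curvature} and~\ref{subsect-fixed-points}. The only points requiring a little care are tracking the sign of the coefficient $\tfrac{49}{16}(C - \tfrac{10}{7})$ in~\eqref{eqn-fixed-pt-Laplacian-coflow-1}, since the nearly-parallel-versus-torsion-free conclusion depends precisely on whether that coefficient vanishes or is strictly positive, and remembering to invoke $\tau_1 = \tau_2 = 0$ so that the torsion of a coclosed fixed point is governed entirely by $\tau_0$ and $\tau_3$.
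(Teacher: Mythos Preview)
Your proposal is correct and follows essentially the same approach as the paper: specialize the fixed-point conditions~\eqref{eqn-fixed-pt-G2-Anomaly-flow-1} and~\eqref{eqn-fixed-pt-G2-Anomaly-flow-7} to $f \equiv 0$, use~\eqref{eqn-fixed-pt-Laplacian-coflow-7} for $\tau_0$ constant when $C \neq 1$, then combine~\eqref{eq:T-v2} with the Bianchi consequence~\eqref{eqn-G2-Bianchi-conf-coclosed-7} for $\div T = 0$, read off constant $\|\tau_3'\|$ from~\eqref{eqn-fixed-pt-Laplacian-coflow-1}, and obtain $d^*\tau_3 = 0$ from $d^2\varphi = 0$. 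Your case split at $C = \tfrac{10}{7}$ and $C > \tfrac{10}{7}$ is handled with slightly more explicit detail than the paper gives, but the logic is identical.
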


\begin{rmk}
Although we make use of the explicit expression for $S_{27}$ from~\eqref{eqn-fixed-pt-G2-Anomaly-flow-27} in the next section to compute the evolution of the metric, we were not able to use the vanishing of $S_{27}$ to obtain further useful information about the fixed points of the modified $G_2$-Anomaly flow. It is not clear how to do this.
\end{rmk}

\subsection{Evolution of the metric}
\label{sec-evol-metric}

In this subsection, we compute the flow of the metric $g$ induced by the modified $G_2$-anomaly flow~\eqref{eqn-G2-Anomaly-flow}. For general $C$ we find a Ricci flow with extra torsion terms, but when when $C=\frac{4}{3}$ many terms cancel and we see that the metric flows exactly by the bosonic Einstein equation~\eqref{GRF}, which is reminiscent of the generalized Ricci flow~\cite{GFS21}.

\begin{rmk}
We also note that our analysis resolves a question of~\cite{AMP24}, since we prove that the metric does indeed evolve by~\eqref{GRF} even though the supersymmetric condition $\tau_0 = $ \emph{constant} is not imposed along the flow (though we have seen earlier that $\tau_0 =$ \emph{constant} holds at a fixed point).
\end{rmk}

Suppose that the $G_2$-structure $\varphi$ evolves by
\begin{align*}
    \del_t \psi = A \diamond \psi = \Big( \frac{1}{7} (\tr A) g + A_7 + A_{27} \Big) \diamond \psi.
\end{align*}
This then induces~\cite{DGK25,Kar09} the flow
\begin{align*}
    \del_t \vol = (\tr A) \vol
\end{align*}
of the volume form and the flow
\begin{align*}
    \del_t g = \frac{2}{7} (\tr A) g + 2 A_{27}
\end{align*}
of the metric.

Recall that the dilaton $f$ was defined with respect to a reference volume form $\vol_R$. In particular,~\eqref{eqn-dilaton} then shows that
\begin{align} \label{eq:f-evol}
    4 e^{4f} (\del_t f) = \del_t (e^{4f}) = \del_t \Big( \frac{\vol}{\vol_R} \Big) = (\tr A) \frac{\vol}{\vol_R} = (\tr A) e^{4f} \implies \del_t f = \frac{1}{4} (\tr A).
\end{align}
Thus we have
\begin{align*}
    \del_t (e^{-2f} \psi) &= -2 e^{-2f} (\del_t f) \psi + e^{-2f} (\del_t \psi) \\
    &= - \frac{1}{2} e^{-2f} (\tr A) \psi + e^{-2f} \Big( \frac{1}{7} (\tr A) g + A_7 + A_{27} \Big) \diamond \psi \\
    &= e^{-2f} \Big( \frac{1}{56} (\tr A) g + A_7 + A_{27} \Big) \diamond \psi.
\end{align*}
Matching terms with
\begin{align*}
    \del_t (e^{-2f} \psi) = \Big( \frac{1}{7} (\tr S) + S_7 + S_{27} \Big) \diamond \psi,
\end{align*}
we see that
\begin{align*}
    \del_t g = \frac{2}{7} (\tr A) g + 2 A_{27} = e^{2f} \Big( \frac{16}{7} (\tr S) g + 2 S_{27} \Big),
\end{align*}
where $\tr S$ and $S_{27}$ were computed and set to $0$ in the previous subsection for the fixed point conditions~\eqref{eqn-fixed-pt-G2-Anomaly-flow-1} and~\eqref{eqn-fixed-pt-G2-Anomaly-flow-27}.

In particular, we have
\begin{align*}
    \del_t g_{ia} &= e^{2f} \Big[ (\nabla_i \nabla_a f) + (\Delta f) g_{ia} + \Big( \big( \nabla_p (\tau_3')_{iq} \big) \varphi_{pqa} + \big( \nabla_p (\tau_3')_{aq} \big) \varphi_{pqi} \Big) \\
    &\qquad \qquad + 2 (\nabla_i f) (\nabla_a f) - 2 \|\nabla f\|^2 g_{ia} + 7 \Big( C - \frac{10}{7} \Big) \tau_0^2 g_{ia} \\
    &\qquad \qquad + 2 (\tau_3')_{im} (\tau_3')_{ma} + (\tau_3')_{pq} (\tau_3')_{rs} \varphi_{pri} \varphi_{qsa} + \|\tau_3'\|^2 g_{ia} \\
    &\qquad \qquad + \big( (\nabla_p f) (\tau_3')_{iq} \varphi_{pqa} + (\nabla_p f) (\tau_3')_{aq} \varphi_{pqi} \big) - \frac{7}{2} \Big( C - \frac{13}{7} \Big) \tau_0 (\tau_3')_{ia} \Big]. \numberthis \label{eq:metric-flow-explicit}
\end{align*}

We want to compare the right hand side of~\eqref{eq:metric-flow-explicit} to the symmetric $2$-tensor $H_C^2$, defined by $(H_C)^2_{ia} = (H_C)_{pqi} (H_C)_{pqa}$. For a general $2$-tensor $B$, one can compute that
\begin{align*}
    (B \diamond \varphi)^2_{ia} & = (B \diamond \varphi)_{pqi} (B \diamond \varphi)_{pqa} \\
    &= \big( B_{ik} \varphi_{kpq} - B_{pk} \varphi_{kiq} + B_{qk} \varphi_{kip} \big) \big( B_{al} \varphi_{lpq} - B_{pl} \varphi_{laq} + B_{ql} \varphi_{lap} \big) \\
    &= B_{ik} \varphi_{kpq} B_{al} \varphi_{lpq} - 2 B_{ik} \varphi_{kpq} B_{pl} \varphi_{laq} - 2 B_{pk} \varphi_{kiq} B_{al} \varphi_{lpq} + 2 \big( B_{pk} \varphi_{kiq} - B_{qk} \varphi_{kip} \big) B_{pl} \varphi_{laq} \\
    &= 2 B_{im} B_{am} + 2 B_{mm} \big( B_{ia} + B_{ai} \big) + 2 B_{pq} B_{pq} g_{ia} - 2 B_{mi} B_{ma} \\
    &\qquad + 2 B_{pq} \big( B_{im} \psi_{pqma} + B_{am} \psi_{pqmi} \big) + 2 B_{pq} B_{rs} \varphi_{psi} \varphi_{qra}.
\end{align*}

Substituting our expression for $B$ from~\eqref{eqn-B} into the various terms above, one computes that
\begin{align*}
    2 B_{im} B_{am} &= - \frac{1}{18} (\nabla_i f) (\nabla_a f) + \frac{1}{18} \|\nabla f\|^2 g_{ia} \\
    &\qquad + \frac{49}{72} \Big( C - \frac{10}{7} \Big)^2 \tau_0^2 g_{ia} + 2 (\tau_3')_{im} (\tau_3')_{ma} \\
    &\qquad + \frac{1}{3} \big( (\nabla_p f) (\tau_3')_{iq} \varphi_{pqa} + (\nabla_p f) (\tau_3')_{aq} \varphi_{pqi} \big) + \frac{7}{3} \Big( C - \frac{10}{7} \Big) \tau_0 (\tau_3')_{ia},
\end{align*}
\begin{align*}
    2 B_{mm} \big( B_{ia} + B_{ai} \big) &= \frac{343}{36} \Big( C - \frac{10}{7} \Big)^2 \tau_0^2 g_{ia} + \frac{49}{3} \Big( C - \frac{10}{7} \Big) \tau_0 (\tau_3')_{ia}, \\
    2 B_{pq} B_{pq} g_{ia} &= \frac{1}{3} \|\nabla f\|^2 g_{ia} + \frac{343}{72} \Big( C - \frac{10}{7} \Big)^2 \tau_0^2 g_{ia} + 2 \|\tau_3'\|^2 g_{ia}, \\
    - 2 B_{mi} B_{ma} &= \frac{1}{18} (\nabla_i f) (\nabla_a f) - \frac{1}{18} \|\nabla f\|^2 g_{ia} \\
    &\qquad - \frac{49}{72} \Big( C - \frac{10}{7} \Big)^2 \tau_0^2 g_{ia} - 2 (\tau_3')_{im} (\tau_3')_{ma} \\
    &\qquad + \frac{1}{3} \big( (\nabla_p f) (\tau_3')_{iq} \varphi_{pqa} + (\nabla_p f) (\tau_3')_{aq} \varphi_{pqi} \big) - \frac{7}{3} \Big( C - \frac{10}{7} \Big) \tau_0 (\tau_3')_{ia}, \\
    2 B_{pq} \big( B_{im} \psi_{pqma} + B_{am} \psi_{pqmi} \big) &= - \frac{4}{9} (\nabla_i f) (\nabla_a f) + \frac{4}{9} \|\nabla f\|^2 g_{ia} \\
    &\qquad + \frac{4}{3} \big( (\nabla_p f) (\tau_3')_{iq} \varphi_{pqa} + (\nabla_p f) (\tau_3')_{aq} \varphi_{pqi} \big), \\
    2 B_{pq} B_{rs} \varphi_{psi} \varphi_{qra} &= - \frac{5}{9} (\nabla_i f) (\nabla_a f) + \frac{2}{9} \|\nabla f\|^2 g_{ia} + \frac{49}{12} \Big( C - \frac{10}{7} \Big)^2 \tau_0^2 g_{ia} \\
    &\qquad + 2 (\tau_3')_{pq} (\tau_3')_{rs} \varphi_{pri} \varphi_{qsa} - \frac{7}{3} \Big( C - \frac{10}{7} \Big) \tau_0 (\tau_3')_{ia}.
\end{align*}

Summing up the above expressions, we obtain
\begin{align} \label{H-squared}
    (H_C)^2_{ia} &= - (\nabla_i f) (\nabla_a f) + \|\nabla f\|^2 g_{ia} + \frac{147}{8} \Big( C - \frac{10}{7} \Big)^2 \tau_0^2 g_{ia} + 2 (\tau_3')_{pq} (\tau_3')_{rs} \varphi_{pri} \varphi_{qsa} + 2 \|\tau_3'\|^2 g_{ia} \\
    &\qquad + 2 \big( (\nabla_p f) (\tau_3')_{iq} \varphi_{pqa} + (\nabla_p f) (\tau_3')_{aq} \varphi_{pqi} \big) + 14 \Big( C - \frac{10}{7} \Big) \tau_0 (\tau_3')_{ia}.
\end{align}

We can now combine the above with our expression~\eqref{eqn-Ric-conf-coclosed} for the Ricci tensor, which we reproduce here:
\begin{equation} \tag{\ref{eqn-Ric-conf-coclosed}}
\begin{aligned}
    \Ric_{ia} &= - \frac{5}{2} (\nabla_i \nabla_a f) - \frac{1}{2} (\Delta f) g_{ia} - \frac{1}{2} \Big[ \big( \nabla_p (\tau_3')_{iq} \big) \varphi_{pqa} + \big( \nabla_p (\tau_3')_{aq} \big) \varphi_{pqi} \Big] \\
    &\qquad + \frac{3}{8} \tau_0^2 g_{ia} - \frac{5}{4} \tau_0 (\tau_3')_{ia} - (\tau_3')_{im} (\tau_3')_{ma} - \frac{5}{4} (\nabla_i f) (\nabla_a f) + \frac{5}{4} \|\nabla f\|^2 g_{ia}.
\end{aligned}
\end{equation}
Recalling~\eqref{eq:metric-flow-explicit}, with some more computation we finally deduce the following result:

\begin{thm} \label{thm:metric-evol}
    Under the modified $G_2$-anomaly flow~\eqref{eqn-G2-Anomaly-flow} restricted to conformally coclosed $G_2$-structures, the metric $g$ evolves by
    \begin{align*}
        \del_t g_{ia} &= e^{2f} \Big[ - 2 \Ric_{ia} - 4 (\nabla_i \nabla_a f) + \frac{1}{2}(H_C)^2_{ia} \\
        &\qquad \qquad - \frac{147}{16} \Big( C - \frac{4}{3} \Big) \Big( C - \frac{16}{7} \Big) \tau_0^2 g_{ia} - \frac{21}{2} \Big( C - \frac{4}{3} \Big) \tau_0 (\tau_3')_{ia} \Big].
    \end{align*}
\end{thm}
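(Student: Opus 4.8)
The plan is to prove the stated evolution equation by direct substitution, since all three ingredients have already been assembled: the explicit form~\eqref{eq:metric-flow-explicit} of $\del_t g_{ia}$, the expression~\eqref{H-squared} for $(H_C)^2_{ia}$, and the Ricci formula~\eqref{eqn-Ric-conf-coclosed}. Concretely, I would form the combination $-2\Ric_{ia} - 4\nabla_i\nabla_a f + \tfrac12 (H_C)^2_{ia}$ from these formulas, expand it, and subtract it from the bracket in~\eqref{eq:metric-flow-explicit}, with the goal of showing the remainder is exactly $-\tfrac{147}{16}\big(C-\tfrac43\big)\big(C-\tfrac{16}{7}\big)\tau_0^2 g_{ia} - \tfrac{21}{2}\big(C-\tfrac43\big)\tau_0(\tau_3')_{ia}$.

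The computation is organized by tensor type. Ten independent structures appear across the three input formulas: $\nabla_i\nabla_a f$, $(\Delta f)g_{ia}$, $(\nabla_i f)(\nabla_a f)$, $\|\nabla f\|^2 g_{ia}$, the symmetrized derivative term $\big(\nabla_p(\tau_3')_{iq}\big)\varphi_{pqa} + (i \lrarr a)$, the symmetrized term $(\nabla_p f)(\tau_3')_{iq}\varphi_{pqa} + (i \lrarr a)$, $(\tau_3')_{im}(\tau_3')_{ma}$, $(\tau_3')_{pq}(\tau_3')_{rs}\varphi_{pri}\varphi_{qsa}$, $\tau_0(\tau_3')_{ia}$, and $\tau_0^2 g_{ia}$. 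For the first eight, the coefficients agree identically: $\Ric$ contributes $\{-\tfrac52, -\tfrac12, -\tfrac54, \tfrac54, -\tfrac12, 0, -1, 0\}$ in these slots, $\Hess f$ contributes only to $\nabla_i\nabla_a f$, and $\tfrac12(H_C)^2$ contributes $\{0,0,-\tfrac12,\tfrac12,0,1,0,1\}$; summing $-2\Ric - 4\Hess f + \tfrac12(H_C)^2$ gives $\{1,1,2,-2,1,1,2,1\}$, which matches the bracket in~\eqref{eq:metric-flow-explicit} term for term and leaves nothing over. In particular, no $G_2$-contraction identity relating $(\tau_3')_{im}(\tau_3')_{ma}$, $(\tau_3')_{pq}(\tau_3')_{rs}\varphi_{pri}\varphi_{qsa}$, and $\|\tau_3'\|^2 g_{ia}$ is needed, because these structures already appear with matching coefficients across the three inputs.

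The only substantive step is matching the $\tau_0^2 g_{ia}$ and $\tau_0(\tau_3')_{ia}$ coefficients. For $\tau_0^2 g_{ia}$, the target coefficient in~\eqref{eq:metric-flow-explicit} is $7\big(C-\tfrac{10}{7}\big)$, while $-2\Ric + \tfrac12(H_C)^2$ contributes $-\tfrac34 + \tfrac{147}{16}\big(C-\tfrac{10}{7}\big)^2$; the difference is the quadratic $7\big(C-\tfrac{10}{7}\big) + \tfrac34 - \tfrac{147}{16}\big(C-\tfrac{10}{7}\big)^2$, which one checks expands to $-\tfrac{147}{16}\big(C-\tfrac43\big)\big(C-\tfrac{16}{7}\big)$. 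For $\tau_0(\tau_3')_{ia}$, the target is $-\tfrac72\big(C-\tfrac{13}{7}\big)$, the contribution from $-2\Ric + \tfrac12(H_C)^2$ is $\tfrac52 + 7\big(C-\tfrac{10}{7}\big)$, and the difference simplifies to $-\tfrac{21}{2}\big(C-\tfrac43\big)$. Assembling everything yields the stated evolution equation, and also confirms (for the $C=\tfrac43$ application in the next section) that both correction terms vanish precisely when $C=\tfrac43$. I do not expect a genuine obstacle here beyond the care needed to track many terms; the most error-prone point is bookkeeping of the symmetrized terms $\big(\nabla_p(\tau_3')_{iq}\big)\varphi_{pqa} + (i \lrarr a)$ and $(\nabla_p f)(\tau_3')_{iq}\varphi_{pqa} + (i \lrarr a)$, since these propagate through all of~\eqref{eq:metric-flow-explicit},~\eqref{H-squared}, and~\eqref{eqn-Ric-conf-coclosed}.
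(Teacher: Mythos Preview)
Your proposal is correct and follows exactly the paper's approach: the paper derives~\eqref{eq:metric-flow-explicit} and~\eqref{H-squared}, recalls~\eqref{eqn-Ric-conf-coclosed}, and then simply states ``with some more computation we finally deduce'' the theorem --- which is precisely the term-by-term comparison you have written out. One trivial bookkeeping note: your list of ``ten independent structures'' omits $\|\tau_3'\|^2 g_{ia}$, which appears in both~\eqref{eq:metric-flow-explicit} and~\eqref{H-squared} with coefficient $+1$ on each side (and so cancels), but you clearly already know this since you mention the term explicitly when remarking that no contraction identity is needed.
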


In particular, we observe that the metric evolves by a conformal generalized Ricci flow plus Hessian plus lower-order terms~\cite{GFS21}. Moreover, in the distinguished $C = \frac{4}{3}$ case, the lower-order discrepancies vanish.

\subsection{Comments on heterotic \texorpdfstring{$G_2$}{G2}} \label{sec:heterotic}

Let us put this discussion in the context of heterotic string theory. The heterotic $G_2$ equations at order $\alpha'=0$ are for a pair $(\varphi, f)$ where $\varphi$ is a $G_2$-structure and $f$ is a scalar function satisfying
\begin{equation} \label{g2-heterotic}
       d H = 0, \qquad d (e^{-2f} \psi) = 0,
\end{equation}
where $H = -e^{2f} d^* (e^{-2f} \psi) + \frac{2}{3} (\tr T) \varphi$. We see that fixed points of the flow
\[
\partial_t (e^{-2f} \psi) = - dH, \quad d (e^{-2f} \psi) =0, \quad e^{4 f} = e^{4 f_0} \frac{{\rm vol}_\varphi}{{\rm vol}_0},
\]
are solutions of the heterotic $G_2$ equations at $\alpha'=0$. In our earlier notation, this corresponds to setting $C=\frac{4}{3}$.

\begin{rmk}
  There are several \emph{inequivalent} formulations of the heterotic $G_2$ equations. For example, see~\cite{daSilvaEtAl2024, dlOLMMS2020, dlOMS2018, AMP24,CGFT2022, LotaySaEarp2023} for various versions. Here we simply consider the equations at order $\alpha'=0$ and turn off the gauge bundle.
  \end{rmk}

As a corollary of our calculations, and in particular from~\eqref{eqn-fixed-pt-G2-Anomaly-flow-7}, we find:

\begin{cor} \label{cor:constant-tau}
Let $M^7$ be a compact manifold and suppose $(\varphi, f)$ solves the heterotic $G_2$ equation at order $\alpha'=0$, namely~\eqref{g2-heterotic}. Then $\tau_0$ is constant.
\end{cor}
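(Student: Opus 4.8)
The plan is to recognize the heterotic $G_2$ system at order $\alpha'=0$ as precisely the fixed-point equations of the modified $G_2$-anomaly flow~\eqref{eqn-G2-Anomaly-flow} in the distinguished case $C = \frac{4}{3}$, and then to read off the conclusion from the $7$-component of the fixed-point condition already derived in Section~\ref{subsect-fixed-points}.

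First I would note that, by the definition of $H_C$ together with the computation~\eqref{eqn-H_C}, setting $C = \frac{4}{3}$ makes $-(C-2) = \frac{2}{3}$, so $H_{4/3} = -e^{2f} d^*(e^{-2f}\psi) + \frac{2}{3}(\tr T)\varphi$ coincides with the $3$-form $H$ appearing in~\eqref{g2-heterotic}. Thus the heterotic system~\eqref{g2-heterotic} says exactly that $\varphi$ is conformally coclosed, $d(e^{-2f}\psi)=0$, and $dH_{4/3}=0$. The conformally coclosed condition is the standing hypothesis under which all of Sections~\ref{sect-curvature} and~\ref{subsect-fixed-points} operate, while $dH_{4/3}=0$ is precisely the fixed-point equation $-dH_C = d(B \diamond \varphi) = 0$ with $B$ given by~\eqref{eqn-B}. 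Consequently $(\varphi,f)$ inherits every consequence of the fixed-point condition derived there, and in particular the $7$-component identity~\eqref{eqn-fixed-pt-G2-Anomaly-flow-7}.

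Next I would simply specialize~\eqref{eqn-fixed-pt-G2-Anomaly-flow-7} to $C = \frac{4}{3}$: the term $\frac{7}{8}(C - \frac{4}{3})\tau_0(\nabla_k f)$ vanishes identically, leaving $\frac{7}{12}(C-1)(\nabla_k \tau_0) = \frac{7}{36}(\nabla_k \tau_0) = 0$, hence $\nabla_k \tau_0 = 0$ for every $k$. Since $M^7$ is connected this forces $\tau_0$ to be constant; in fact compactness is not even needed for this step.

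There is essentially no real obstacle here: the substantive work was already done in establishing~\eqref{eqn-fixed-pt-G2-Anomaly-flow-7}, which itself rested on the $G_2$-Bianchi consequence~\eqref{eqn-G2-Bianchi-conf-coclosed-7} of Proposition~\ref{propn-G2-Bianchi-conf-coclosed-7} together with the decomposition of exact $4$-forms from Appendix~\ref{app-decomp-exact-4-forms}. The only point requiring a moment's care is the bookkeeping identification of the heterotic $H$ with $H_{4/3}$, i.e.\ verifying the sign and coefficient $-(C-2) = \frac{2}{3}$ at $C=\frac{4}{3}$, and confirming that the conformally coclosed condition built into~\eqref{g2-heterotic} is exactly the hypothesis used throughout the fixed-point analysis, so that~\eqref{eqn-fixed-pt-G2-Anomaly-flow-7} is applicable as stated.
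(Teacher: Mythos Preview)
Your proposal is correct and matches the paper's own argument exactly: the paper derives Corollary~\ref{cor:constant-tau} directly from~\eqref{eqn-fixed-pt-G2-Anomaly-flow-7} specialized to $C=\frac{4}{3}$, after identifying the heterotic $H$ with $H_{4/3}$. Your additional observation that compactness (beyond connectedness) is not actually used in this step is also accurate.
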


\begin{rmk}
It has already been observed that on a compact manifold with $G_2$-structure $\varphi$ such that $\tau_2 = 0$ and $dH = 0$, that $\tau_0$ must be constant. See~\cite[Corollary 3.2]{FF25} or~\cite[Theorem 7.1]{IS23}.
\end{rmk}

The condition that $\tau_0$ is fixed to a constant is often included as an extra condition in the heterotic $G_2$ equations, but in the current setup this is automatic.

Next, we consider the constraints on the Riemannian geometry implied by~\eqref{g2-heterotic}. In the language of string theory, we check whether the supersymmetry constraints at order $\alpha'=0$ restricted to 7-dimensions imply the equations of motion at order $\alpha'=0$ restricted to 7-dimensions. It was noticed in~\cite{daSilvaEtAl2024} that this is not true; while the supersymmetry constraints on $G_2$-geometries imply the graviton equation of motion and the $H$-equation of motion, the dilaton equation is modified. We include the derivation here for completeness.

\begin{cor} \label{susy2eom}~\cite{daSilvaEtAl2024}
Let $M^7$ be a compact manifold and suppose $(\varphi, f)$ solves the heterotic $G_2$ equation at order $\alpha'=0$, namely~\eqref{g2-heterotic} with
\begin{equation} \label{eq:H-again}
H = -e^{2f} d^* (e^{-2f} \psi) + \frac{2}{3} (\tr T) \varphi.
\end{equation}
  Then $(g_\varphi, H, f)$ solves the Riemannian system:
\begin{align}
   {\rm Ric} + 2 \nabla^2 f - \frac{1}{4} H^2 &= 0, \label{einstein} \\
  d^* ( e^{-2f} H) &= 0, \label{H-eom} \\
  R  + 4 \Delta f - 4 |\nabla f|^2- \frac{1}{12} \|H\|^2 &= \bigg( \frac{7}{6} \tau_0 \bigg)^2, \label{dilaton}
\end{align}
together with the closedness condition $dH=0$. 
\end{cor}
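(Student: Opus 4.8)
The plan is to observe that a solution $(\varphi,f)$ of the heterotic $G_2$ system \eqref{g2-heterotic}, with $H$ as in \eqref{eq:H-again}, is exactly a fixed point of the modified $G_2$-anomaly flow \eqref{eqn-G2-Anomaly-flow} in the distinguished case $C=\tfrac43$: by Remark~\ref{rmk-torsionconnection} we have $H=H_{4/3}$, and the fixed-point equation of \eqref{eqn-G2-Anomaly-flow} is precisely $dH_C=0$. Hence every consequence of the fixed-point analysis of Sections~\ref{subsect-fixed-points} and~\ref{sec-evol-metric} is at our disposal. Since $dH=0$ is part of the hypothesis, it remains only to establish \eqref{einstein}, \eqref{H-eom}, and \eqref{dilaton}.

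For \eqref{einstein} I would invoke Theorem~\ref{thm:metric-evol}: since $\varphi$ is a fixed point, $\partial_t g\equiv 0$, and setting $C=\tfrac43$ annihilates both lower-order discrepancy terms (their coefficients contain the factor $C-\tfrac43$), so what remains is $0=e^{2f}\big(-2\Ric-4\Hess(f)+\tfrac12 H^2\big)$, which is \eqref{einstein}. For \eqref{H-eom} I would argue directly: from \eqref{eq:H-again}, $e^{-2f}H=-d^*(e^{-2f}\psi)+\tfrac23 e^{-2f}(\tr T)\varphi$, so applying $d^*$ and using $d^*d^*=0$ gives $d^*(e^{-2f}H)=\tfrac23 d^*\big(e^{-2f}(\tr T)\varphi\big)$. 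Now $\tr T=\tfrac74\tau_0$ is constant by Corollary~\ref{cor:constant-tau} (equivalently, by \eqref{eqn-fixed-pt-G2-Anomaly-flow-7} at $C=\tfrac43$), so this equals $\tfrac76\tau_0\, d^*(e^{-2f}\varphi)=\pm\tfrac76\tau_0\star d(e^{-2f}\psi)=0$ by the conformally coclosed condition $d(e^{-2f}\psi)=0$.

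The remaining, and hardest, piece is the dilaton equation \eqref{dilaton}, which is exactly where the supersymmetry constraints fail to imply the equations of motion and produce the defect $(\tfrac76\tau_0)^2$. I would first trace the formula \eqref{H-squared} for $(H_C)^2$: using the contraction identity $\varphi_{pri}\varphi_{qsi}=\delta_{pq}\delta_{rs}-\delta_{ps}\delta_{rq}-\psi_{prqs}$, the vanishing of $\psi_{ijkl}S_{ik}S_{jl}$ for symmetric $S$, the trace-freeness of $\tau_3'$, and $(\tau_3')_{iq}\varphi_{pqi}=0$, one obtains $\|H_C\|^2=6\|\nabla f\|^2+\tfrac{1029}{8}\big(C-\tfrac{10}{7}\big)^2\tau_0^2+12\|\tau_3'\|^2$ with the normalization $\|H\|^2:=H_{pqi}H_{pqi}$; at $C=\tfrac43$ this uses $\tfrac{1029}{8}\cdot\tfrac{4}{441}=\tfrac76$. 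Then I would combine with the scalar curvature formula \eqref{eqn-R-conf-coclosed} and eliminate $\Delta f$ via the $1$-component fixed-point relation \eqref{eqn-fixed-pt-G2-Anomaly-flow-1}, which at $C=\tfrac43$ reads $\Delta f=\tfrac32\|\nabla f\|^2-\|\tau_3'\|^2+\tfrac{7}{12}\tau_0^2$; the combination $R+4\Delta f-4\|\nabla f\|^2-\tfrac{1}{12}\|H\|^2$ then collapses to $\tfrac{49}{36}\tau_0^2=(\tfrac76\tau_0)^2$, giving \eqref{dilaton}. The main obstacle is the bookkeeping in this last step: tracing \eqref{H-squared} correctly with the $G_2$ contraction identities, and pinning down the normalization of $\|H\|^2$ so that all the $\tau_0^2$, $\|\nabla f\|^2$, and $\|\tau_3'\|^2$ coefficients balance against \eqref{eqn-R-conf-coclosed} and \eqref{eqn-fixed-pt-G2-Anomaly-flow-1}.
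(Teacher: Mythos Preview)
Your proposal is correct and follows essentially the same approach as the paper: \eqref{einstein} from Theorem~\ref{thm:metric-evol} at a fixed point with $C=\tfrac43$, \eqref{H-eom} by applying $d^*$ to $e^{-2f}H$ and using constancy of $\tr T$ from Corollary~\ref{cor:constant-tau}, and \eqref{dilaton} by combining the $\|H\|^2$ formula with scalar-curvature information and the $1$-component fixed-point relation~\eqref{eqn-fixed-pt-G2-Anomaly-flow-1}. The only cosmetic difference is that for \eqref{dilaton} the paper traces the already-established \eqref{einstein} to obtain $R+2\Delta f=\tfrac14\|H\|^2$ and then adds the fixed-point identity, whereas you substitute the scalar-curvature formula~\eqref{eqn-R-conf-coclosed} directly; the ingredients and the arithmetic are the same.
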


\begin{proof}
  The graviton equation~\eqref{einstein} follows from Theorem~\ref{thm:metric-evol} upon setting $C=\frac{4}{3}$ and $\partial_t g =0$. The $H$-equation of motion~\eqref{H-eom} follows by applying $d^* (e^{-2f} \cdot)$ to both sides of~\eqref{eq:H-again} and using that $\tr T$ is constant by Corollary~\ref{cor:constant-tau}. It remains to derive the modified dilaton equation~\eqref{dilaton}; the true dilaton equation (e.g.~\cite{AMP24}) has a zero on the right-hand side but here we are blocked by $\tau_0$. Taking the trace of~\eqref{einstein} gives
\begin{equation} \label{tr-einstein}
  R + 2 \Delta f = \frac{1}{4} \| H \|^2.
  \end{equation}
  This is not the dilaton equation~\eqref{dilaton}, so we must continue to manipulate terms. Taking the tensor norm squared of~\eqref{eq:first-H} and using the various contraction identities yields
\begin{equation} \label{eq:norm-H-squared}
  \| H \|^2 = 6 \|\nabla f\|^2 + \frac{7}{6} \tau_0^2 + 12 \|\tau_3'\|^2.
\end{equation}
Combining~\eqref{tr-einstein} and~\eqref{eq:norm-H-squared} gives
\begin{equation} \label{wecancheck2}
R + 2 \Delta f - \frac{1}{12} \| H \|^2 =\frac{7}{36} \tau_0^2 + 2 \|\tau_3'\|^2 + \|\nabla f\|^2.
\end{equation}
Next,~\eqref{eqn-fixed-pt-G2-Anomaly-flow-1} is the following identity for the Laplacian of $f$:
\begin{equation} \label{wecancheck3}
  2 \Delta f = 3\|\nabla f\|^2 -  2 \|\tau_3'\|^2 + \frac{7}{6}\tau_0^2.
\end{equation}
Adding up~\eqref{wecancheck2} and~\eqref{wecancheck3}, we see that the $\tau_3'$ terms cancel, and thus
\[
R + 4 \Delta f- \frac{1}{12} \| H \|^2 = 4 \|\nabla f\|^2 + \frac{49}{36}\tau_0^2.
\]
This completes the proof.
\end{proof}

As mentioned in Remark~\ref{rmk:fixed-points}, from Corollary~\ref{susy2eom} we obtain the final statement of Theorem~\ref{thm:fixed-points-intro}, namely that $(g,H,f)$ is a generalized Ricci soliton when $C = \frac{4}{3}$, because $H_C = H$ when $C = \frac{4}{3}$.

\begin{rmk}
  We have only considered the relevant equations of string theory with $\alpha'=0$. The name ``anomaly flow''~\cite{PPZ1, PPZ2} refers to terms at level $\alpha'$ involving a connection on a vector bundle $E \rightarrow M^7$, but the definition of the $G_2$-flow with linear $\alpha'$-corrections included remains open. Some progress on the $G_2$-anomaly flow with $\alpha'$-corrections is developed in~\cite{AMP24}, although one outstanding issue is the definition of a natural flow of a connection to a $G_2$-instanton.
\end{rmk}

\begin{rmk}
It is a theme in string theory that setting supersymmetry variations to zero produces special solutions to the equations of motion. We mention here a few references on this topic:~\cite{MartelliSparks,MinasianPetriniSvanes2017,AMP24,McOPic}.
\end{rmk}

\section{Short-time existence of the flow}
\label{sect-ste}

We now consider the well-posedness of the nonlinear equation under consideration. It is useful to define an auxiliary coclosed structure $\wtilde{\varphi}$ given by
\begin{align*}
    \wtilde{\varphi} = e^{-\frac{3}{2} f} \varphi.
\end{align*}
This, in turn, induces an auxiliary metric, Hodge star, dual $4$-form, and volume form given by
\begin{align*}
    \wtilde{g} = e^{-f} g, \qquad \wtilde{\star} = e^{- \frac{(7-2k)}{2} f} \star \quad \text{on $\Omega^k$}, \qquad \wtilde{\psi} = e^{-2f} \psi, \qquad \wtilde{\vol} = e^{-\frac{7}{2} f} \vol.
\end{align*}
The relevant torsion forms~\cite[Section 2.12]{DGK25} are then
\begin{align*}
    \wtilde{\tau}_0 = e^{\frac{1}{2} f} \tau_0, \qquad \wtilde{\tau}_1 = 0, \qquad \wtilde{\tau}_2 = 0, \qquad \wtilde{\tau}_3 = e^{-f} \tau_3.
\end{align*}
We can also define an auxiliary dilaton $\wtilde{f} = \frac{1}{8} f$ such that
\begin{align}
\label{eqn-dilaton-aux}
    e^{4 \wtilde{f}} = e^{\frac{1}{2} f} = e^{(4-\frac{7}{2}) f} = e^{-\frac{7}{2} f} \frac{\vol}{\vol_R} = \frac{\wtilde{\vol}}{\wtilde{\vol}_R},
\end{align}
where $\wtilde{\vol}_R = \vol_R$ is the reference volume form.

We can rewrite the flow~\eqref{eqn-G2-Anomaly-flow} in terms of the auxiliary structures. Firstly, from~\eqref{eqn-H_C} we have
\begin{align*}
    H_C &= -e^{8 \wtilde{f}} \Big[ \frac{7}{4} \Big( C - \frac{10}{7} \Big) \wtilde{\tau}_0 \wtilde{\varphi} + \wtilde{\tau}_3 + 4 (\wtilde{\nabla} \wtilde{f}) \hook \wtilde{\psi} \Big].
\end{align*}
and so the evolution equation becomes
\begin{align}
\label{eqn-G2-Anomaly-flow-aux}
    \del_t \wtilde{\psi} = d \bigg( e^{8 \wtilde{f}} \Big[ \frac{7}{4} \Big( C - \frac{10}{7} \Big) \wtilde{\tau}_0 \wtilde{\varphi} + \wtilde{\tau}_3 + 4 (\wtilde{\nabla} \wtilde{f}) \hook \wtilde{\psi} \Big] \bigg).
\end{align}
In this section we show that this induced flow has short-time existence under the restriction to coclosed $G_2$-structures. We do this using the method of Bryant--Xu~\cite{BX11} and Grigorian~\cite{Gri13}, which involves the use of DeTurck's Trick and the Nash--Moser implicit function theorem.

As such, we define an operator
\begin{align*}
    P (\wtilde{\varphi}) = d \bigg( e^{8 \wtilde{f}} \Big[ \frac{7}{4} \Big( C - \frac{10}{7} \Big) \wtilde{\tau}_0 \wtilde{\varphi} + \wtilde{\tau}_3 + 4 (\wtilde{\nabla} \wtilde{f}) \hook \wtilde{\psi} \Big] \bigg),
\end{align*}
where as before, we require a fixed reference volume form $\wtilde{\vol}_R = \vol_R$ to define the dilaton $f$.

For ease of notation, we will drop the tildes and let $\varphi$ denote a coclosed $G_2$-structure (with related tensors and operators $g$, $\star$, $\psi$, and $\vol$.)

\subsection{Linearizing the evolution equation}
\label{subsect-linearizing-evol-eqn}

Suppose we linearize the operator
\begin{align} \label{eq:P-op}
    P (\varphi) = d \bigg( e^{8 f} \Big[ \frac{7}{4} \Big( C - \frac{10}{7} \Big) \tau_0 \varphi + \tau_3 + 4 (\nabla f) \hook \psi \Big] \bigg)
\end{align}
in the direction of a closed $4$-form $\chi$ defined by
\begin{align}
\label{eqn-chi}
    \del_t \psi = \chi := \frac{4}{3} \alpha \psi - X^\flat \wedge \varphi + A \diamond \psi = \Big( \frac{1}{3} \alpha g - \frac{1}{3} X \hook \varphi + A \Big) \diamond \psi
\end{align}
for a function $\alpha$, vector field $X$, and traceless symmetric $2$-tensor $A$, which corresponds to the variation of $3$-forms
\begin{align} \label{eqn-flow3form}
    \del_t \varphi = \alpha \varphi + X \hook \psi + A \diamond \varphi = \Big( \frac{1}{3} \alpha g - \frac{1}{3} X \hook \varphi + A \Big) \diamond \varphi.
\end{align}
(See~\cite[Remark 3.8]{DGK25} and our equation~\eqref{eq:27-hook} to understand the second equalities in~\eqref{eqn-chi} and~\eqref{eqn-flow3form}.) 

One can check that $\star \chi = \frac{4}{3} \alpha \varphi + X \hook \psi - A \diamond \varphi$. Since the closed condition on $\chi$ is equivalent to $-d^* (\star \chi) = 0$, we can write the condition $d \chi = 0$ in a local orthonormal frame as
\begin{align*}
    0 &= \big( -d^* (\star \chi) \big){}_{ia} \\
    &= \nabla_p \Big( \frac{4}{3} \alpha \varphi_{pia} + X_m \psi_{mpia} - \big( A_{pm} \varphi_{mia} - A_{im} \varphi_{mpa} + A_{am} \varphi_{mpi} \big) \Big) \\
    &= \frac{4}{3} (\nabla_m \alpha) \varphi_{mia} - (\nabla_p X_q) \psi_{pqia} - (\nabla_p A_{mp}) \varphi_{mia} - \big( (\nabla_p A_{iq}) \varphi_{pqa} - (\nabla_p A_{aq}) \varphi_{pqi} \big) + \lot, \numberthis \label{eqn-chi-coclosed}
\end{align*}
where $\lot$ represents lower order terms. (That is, terms which involve no derivatives of $\alpha$, $X$, or $A$.)

Projecting the above onto its $7$-component (equivalently contracting with $\varphi_{iak}$), we obtain the relation
\begin{align*}
    0 &= 8 (\nabla_m \alpha) + 4 (\nabla_p X_q) \varphi_{pqm} - 4 (\nabla_p A_{pm}) + \lot,
\end{align*}
or equivalently
\begin{align}
\label{eqn-chi-coclosed-7}
    2 (\grad \alpha) + (\curl X) - (\div A) = \lot,
\end{align}
which we will make use of below to rewrite~\eqref{eqn-deturck-W} as~\eqref{eqn-deturck-W-2}.

Next we compute the first variations of $\tau_0$ and $\tau_3$ when the $G_2$-structure $\varphi$ has first variation~\eqref{eqn-flow3form}. For this we use~\cite[Proposition 3.10]{DGK25} which says that if $\del_t \varphi = Q \diamond \varphi$, then
$$ \del_t T_{pq} = \frac{1}{2} (\nabla_i Q_{pj} + \nabla_i Q_{jp} - \nabla_p Q_{ij}) \varphi_{ijq} + T_{pk} Q_{qk}. $$
Using this and~\eqref{eq:torsion-relations}, we can compute from $\tau_0 = \frac{4}{7} T_{kk}$ and $- (\tau_3')_{pq} = ( \frac{1}{2} (T_{pq} + T_{qp}) - \frac{1}{7} T_{kk} g_{pq})$ that
\begin{align*}
    \del_t \tau_0 &= \frac{4}{7} (\nabla_m X_m) + \lot, \\
    - \del_t (\tau_3')_{pq} &= \frac{1}{2} \big( \nabla_p X_q + \nabla_q X_p \big) + \frac{1}{2} \big( (\nabla_i A_{jp}) \varphi_{ijq} +(\nabla_i A_{jq}) \varphi_{ijp} \big) - \frac{1}{7} (\nabla_m X_m) g_{pq} + \lot.
\end{align*}
From the second equation above and $(\tau_3)_{ijk} = (\tau_3')_{ip} \varphi_{pjk} - (\tau_3')_{jp} \varphi_{pik} + (\tau_3')_{kp} \varphi_{pij}$, we then obtain
\begin{align*}
    \del_t (\tau_3)_{ijk} &= - \frac{1}{2} \bigg[ \Big( \nabla_i X_p + \nabla_p X_i - \frac{2}{7} (\nabla_m X_m) g_{ip} \Big) \varphi_{pjk} - \Big( \nabla_j X_p + \nabla_p X_j - \frac{2}{7} (\nabla_m X_m) g_{jp} \Big) \varphi_{pik} \\
    &\qquad \qquad + \Big( \nabla_k X_p + \nabla_p X_k - \frac{2}{7} (\nabla_m X_m) g_{kp} \Big) \varphi_{pij} \bigg] \\
    &\qquad - \frac{1}{2} \Big[ \big( (\nabla_p A_{iq}) \varphi_{pqm} + (\nabla_p A_{mq}) \varphi_{pqi} \big) \varphi_{mjk} - \big( (\nabla_p A_{jq}) \varphi_{pqm} + (\nabla_p A_{mq}) \varphi_{pqj} \big) \varphi_{mik} \\
    &\qquad \qquad \qquad + \big( (\nabla_p A_{kq}) \varphi_{pqm} + (\nabla_p A_{mq}) \varphi_{pqk} \big) \varphi_{mij} \Big] + \lot.
\end{align*}
Finally, from~\eqref{eqn-dilaton-aux} (recall that we have dropped the tildes for the auxiliary structures in this section), or immediately from~\eqref{eq:f-evol} (with $A$ replaced by $Q = \frac{1}{3} \alpha g - \frac{1}{3} X \hook \varphi + A$) we obtain
\begin{align*}
    \del_t f = \frac{7}{12} \alpha.
\end{align*}

Collecting these computations, from~\eqref{eq:P-op} we find that the linearization of $P$ at $\varphi$ is
\begin{align*}
    &(DP)_{\varphi} (\chi) \\
    &= e^{8f} d \bigg( {\Big( C - \frac{10}{7} \Big) (\nabla_m X_m) \varphi_{ijk}} {+ \frac{7}{3} (\nabla_m \alpha) \psi_{mijk}} \\
    &\qquad \qquad {- \frac{1}{2} \bigg[ \Big( \nabla_i X_p + \nabla_p X_i - \frac{2}{7} (\nabla_m X_m) g_{ip} \Big) \varphi_{pjk} - \Big( \nabla_j X_p + \nabla_p X_j - \frac{2}{7} (\nabla_m X_m) g_{jp} \Big) \varphi_{pik}} \\
    &\qquad \qquad \qquad {+ \Big( \nabla_k X_p + \nabla_p X_k - \frac{2}{7} (\nabla_m X_m) g_{kp} \Big) \varphi_{pij} \bigg]} \\
    &\qquad \qquad {- \frac{1}{2} \Big[ \big( (\nabla_p A_{iq}) \varphi_{pqm} + (\nabla_p A_{mq}) \varphi_{pqi} \big) \varphi_{mjk} - \big( (\nabla_p A_{jq}) \varphi_{pqm} + (\nabla_p A_{mq}) \varphi_{pqj} \big) \varphi_{mik}} \\
    &\qquad \qquad \qquad {+ \big( (\nabla_p A_{kq}) \varphi_{pqm} + (\nabla_p A_{mq}) \varphi_{pqk} \big) \varphi_{mij} \Big]} + \lot \bigg). \numberthis \label{eq:DP}
\end{align*}

We compare this against $-\Delta_d \chi = -dd^* \chi$. Firstly, from~\eqref{eqn-chi} we see that
\begin{align*}
    (-d^* \chi)_{ijk} &= \nabla_p \Big( \frac{4}{3} \alpha \psi_{pijk} - \big( X_p \varphi_{ijk} - X_i \varphi_{pjk} + X_j \varphi_{pik} - X_k \varphi_{pij} \big) \\
    &\qquad \qquad + \big( A_{pm} \psi_{mijk} - A_{im} \psi_{mpjk} + A_{jm} \psi_{mpik} - A_{km} \psi_{mpij} \big) \Big) \\
    &= {\frac{4}{3} (\nabla_p \alpha) \psi_{pijk}} {- (\nabla_m X_m) \varphi_{ijk}} + \big( (\nabla_p X_i) \varphi_{pjk} - (\nabla_p X_j) \varphi_{pik} + (\nabla_p X_k) \varphi_{pij} \big) \\
    &\qquad \qquad {+ (\nabla_p A_{pm}) \psi_{mijk}} + \big( (\nabla_p A_{qi}) \psi_{pqjk} - (\nabla_p A_{qj}) \psi_{pqik} + (\nabla_p A_{qk}) \psi_{pqij} \big) + \lot. \numberthis \label{eq:d-star-chi}
\end{align*}

By decomposing the $2$-tensor $\nabla_p X_q$ into its components, we also see that
\begin{align*}
    &\qquad (\nabla_p X_i) \varphi_{pjk} - (\nabla_p X_j) \varphi_{pik} + (\nabla_p X_k) \varphi_{pij} \\
    &= {\frac{3}{7} (\nabla_m X_m) \varphi_{ijk}} {{}- \frac{1}{2} \Big( (\nabla_i X_p - \nabla_p X_i) \varphi_{pjk} - (\nabla_j X_p - \nabla_p X_j) \varphi_{pik} + (\nabla_k X_p - \nabla_p X_k) \varphi_{pij} \Big)} \\
    &\qquad {+ \frac{1}{2} \bigg( \Big( \nabla_i X_p + \nabla_p X_i - \frac{2}{7} (\nabla_m X_m) g_{ip} \Big) \varphi_{pjk} - \Big( \nabla_j X_p + \nabla_p X_j - \frac{2}{7} (\nabla_m X_m) g_{jp} \Big) \varphi_{pik}} \\
    &\qquad \qquad {+ \Big( \nabla_k X_p + \nabla_p X_k - \frac{2}{7} (\nabla_m X_m) g_{kp} \Big) \varphi_{pij} \bigg)}. \numberthis \label{eq:d-star-chi-2}
\end{align*}

To simplify the $\nabla A$ terms in~\eqref{eq:d-star-chi}, we exploit the fact that $d \chi = 0$. Explicitly, using~\eqref{eqn-chi-coclosed} we have
\begin{align*}
    &\qquad \big( (\nabla_p A_{qi}) \varphi_{pqm} + (\nabla_p A_{qm}) \varphi_{pqi} \big) \varphi_{mjk} \\
    &= 2 (\nabla_p A_{qi}) \varphi_{pqm} \varphi_{mjk} - \frac{4}{3} (\nabla_p \alpha) \varphi_{pim} \varphi_{mjk} + (\nabla_p X_q) \psi_{pqim} \varphi_{mjk} + (\nabla_p A_{pq}) \varphi_{qim} \varphi_{mjk} + \lot \\
    &= 2 (\nabla_j A_{ki}) - 2 (\nabla_k A_{ji}) - 2 (\nabla_p A_{qi}) \psi_{pqjk} - \frac{4}{3} (\nabla_j \alpha) g_{ik} + \frac{4}{3} (\nabla_k \alpha) g_{ij} + \frac{4}{3} (\nabla_m \alpha) \psi_{mijk} \\
    &\qquad + (\nabla_j X_p - \nabla_p X_j) \varphi_{pik} - (\nabla_k X_p - \nabla_p X_k) \varphi_{pij} + (\nabla_p X_q) \varphi_{pqk} g_{ij} - (\nabla_p X_q) \varphi_{pqj} g_{ik} \\
    &\qquad + (\nabla_p A_{pj}) g_{ik} - (\nabla_p A_{pk}) g_{ij} - (\nabla_p A_{pm}) \psi_{mijk} + \lot.
\end{align*}
Skew-symmetrizing the above in the indices $i,j,k$ and rearranging yields
\begin{align*}
    & \qquad \big( (\nabla_p A_{qi}) \psi_{pqjk} - (\nabla_p A_{qj}) \psi_{pqik} + (\nabla_p A_{qk}) \psi_{pqij} \big) \\
    &= {2 (\nabla_m \alpha) \psi_{mijk} - \frac{3}{2} (\nabla_p A_{pm}) \psi_{mijk} - \big( (\nabla_i X_p - \nabla_p X_i) \varphi_{pjk}} \\
    &\qquad {- (\nabla_j X_p - \nabla_p X_j) \varphi_{pik} + (\nabla_k X_p - \nabla_p X_k) \varphi_{pij} \big)} \\
    &\qquad {- \frac{1}{2} \Big( \big( (\nabla_p A_{iq}) \varphi_{pqm} + (\nabla_p A_{mq}) \varphi_{pqi} \big) \varphi_{mjk} - \big( (\nabla_p A_{jq}) \varphi_{pqm} + (\nabla_p A_{mq}) \varphi_{pqj} \big) \varphi_{mik}} \\
    &\qquad \qquad {+ \big( (\nabla_p A_{kq}) \varphi_{pqm} + (\nabla_p A_{mq}) \varphi_{pqk} \big) \varphi_{mij} \Big)} + \lot. \numberthis \label{eq:d-star-chi-3}
\end{align*}

Substituting~\eqref{eq:d-star-chi-2} and~\eqref{eq:d-star-chi-3} into~\eqref{eq:d-star-chi} gives
\begin{align*}
    (-d^* \chi)_{ijk} &= {- \frac{4}{7} (\nabla_m X_m) \varphi_{ijk}} {+ \frac{10}{3} (\nabla_m \alpha) \psi_{mijk} - \frac{1}{2} (\nabla_p A_{pm}) \psi_{mijk}} \\
    &\qquad {- \frac{3}{2} \big( (\nabla_i X_p - \nabla_p X_i) \varphi_{pjk} - (\nabla_j X_p - \nabla_p X_j) \varphi_{pik} + (\nabla_k X_p - \nabla_p X_k) \varphi_{pij} \big)} \\
    &\qquad {+ \frac{1}{2} \bigg( \Big( \nabla_i X_p + \nabla_p X_i - \frac{2}{7} (\nabla_m X_m) g_{ip} \Big) \varphi_{pjk} - \Big( \nabla_j X_p + \nabla_p X_j - \frac{2}{7} (\nabla_m X_m) g_{jp} \Big) \varphi_{pik}} \\
    &\qquad \qquad {+ \Big( \nabla_k X_p + \nabla_p X_k - \frac{2}{7} (\nabla_m X_m) g_{kp} \Big) \varphi_{pij} \bigg)} \\
    &\qquad {- \frac{1}{2} \Big( \big( (\nabla_p A_{iq}) \varphi_{pqm} + (\nabla_p A_{mq}) \varphi_{pqi} \big) \varphi_{mjk} - \big( (\nabla_p A_{jq}) \varphi_{pqm} + (\nabla_p A_{mq}) \varphi_{pqj} \big) \varphi_{mik}} \\
    &\qquad \qquad {+ \big( (\nabla_p A_{kq}) \varphi_{pqm} + (\nabla_p A_{mq}) \varphi_{pqk} \big) \varphi_{mij} \Big)} + \lot.
\end{align*}

Hence, from the above and~\eqref{eq:DP} we obtain
\begin{align*}
    & \qquad (DP)_{\varphi} (\chi) + e^{8f} \Delta_d \chi \\
    &= e^{8f} d \bigg( {\Big( C - \frac{6}{7} \Big) (\nabla_m X_m) \varphi_{ijk}} {- (\nabla_m \alpha) \psi_{mijk} + \frac{1}{2} (\nabla_p A_{pm}) \psi_{mijk}} \\
    &\qquad \qquad {+ \frac{3}{2} \big( (\nabla_i X_p - \nabla_p X_i) \varphi_{pjk} - (\nabla_j X_p - \nabla_p X_j) \varphi_{pik} + (\nabla_k X_p - \nabla_p X_k) \varphi_{pij} \big)} \\
    &\qquad \qquad {- \bigg[ \Big( \nabla_i X_p + \nabla_p X_i - \frac{2}{7} (\nabla_m X_m) g_{ip} \Big) \varphi_{pjk} - \Big( \nabla_j X_p + \nabla_p X_j - \frac{2}{7} (\nabla_m X_m) g_{jp} \Big) \varphi_{pik}} \\
    &\qquad \qquad \qquad {+ \Big( \nabla_k X_p + \nabla_p X_k - \frac{2}{7} (\nabla_m X_m) g_{kp} \Big) \varphi_{pij} \bigg]} + \lot \bigg). \numberthis \label{eq:DPplusLaplace}
\end{align*}

We can significantly simplify the above as follows. First, observe that
\begin{align*}
\big( d( X \hook \varphi) \big){}_{ijk} & = \nabla_i (X_p \varphi_{pjk}) - \nabla_j (X_p \varphi_{pik}) + \nabla_k (X_p \varphi_{pij}) \\
& =  (\nabla_i X_p) \varphi_{pjk} - (\nabla_j X_p) \varphi_{pik} + (\nabla_k X_p) \varphi_{pij} \big) + \lot.
\end{align*}
Taking $d$ of the above and using~\eqref{eq:d-star-chi-2}, we have
\begin{align*}
    & \qquad d \big( (\nabla_i X_p) \varphi_{pjk} - (\nabla_j X_p) \varphi_{pik} + (\nabla_k X_p) \varphi_{pij} \big) \\
    &= d \bigg( {\frac{3}{7} (\nabla_m X_m) \varphi_{ijk}} {+ \frac{1}{2} \big( (\nabla_i X_p - \nabla_p X_i) \varphi_{pjk} - (\nabla_j X_p - \nabla_p X_j) \varphi_{pik} + (\nabla_k X_p - \nabla_p X_k) \varphi_{pij} \big)} \\
    &\qquad {+ \frac{1}{2} \bigg( \Big( \nabla_i X_p + \nabla_p X_i - \frac{2}{7} (\nabla_m X_m) g_{ip} \Big) \varphi_{pjk} - \Big( \nabla_j X_p + \nabla_p X_j - \frac{2}{7} (\nabla_m X_m) g_{jp} \Big) \varphi_{pik}} \\
    &\qquad \qquad {+ \Big( \nabla_k X_p + \nabla_p X_k - \frac{2}{7} (\nabla_m X_m) g_{kp} \Big) \varphi_{pij} \bigg)} + \lot \bigg).
\end{align*}
Adding twice the above to the expression~\eqref{eq:DPplusLaplace} yields
\begin{align*}
    & \qquad (DP)_{\varphi} (\chi) + e^{8f} \Delta_d \chi \\
    &= e^{8f} d \Big( {C (\nabla_m X_m) \varphi_{ijk}} {- (\nabla_m \alpha) \psi_{mijk} + \frac{1}{2} (\nabla_p A_{pm}) \psi_{mijk}} \\
    &\qquad \qquad {+ \frac{5}{2} \big( (\nabla_i X_p - \nabla_p X_i) \varphi_{pjk} - (\nabla_j X_p - \nabla_p X_j) \varphi_{pik} + (\nabla_k X_p - \nabla_p X_k) \varphi_{pij} \big)} + \lot \Big). \numberthis \label{eq:DPplusLaplace-2}
\end{align*}

We can rewrite the above in terms of $\curl X$, $\grad \alpha$, and $\div A$ as follows. From $(\curl X)_m = (\nabla_a X_b) \varphi_{abm}$ we get
\begin{align*}
    \big( (\curl X) \hook \psi \big){}_{ijk} &= {- \big( (\nabla_i X_p - \nabla_p X_i) \varphi_{pjk} - (\nabla_j X_p - \nabla_p X_j) \varphi_{pik} + (\nabla_k X_p - \nabla_p X_k) \varphi_{pij} \big)}.
\end{align*}
We also have
\begin{align*}
    \big( (\grad \alpha) \hook \psi \big){}_{ijk} &= {(\nabla_m \alpha) \psi_{mijk}}, \\
    \big( (\div A) \hook \psi \big){}_{ijk} &= {(\nabla_p A_{pm}) \psi_{mijk}}.
\end{align*}
Defining a vector field
\begin{align} \label{eqn-deturck-W}
    W(\chi) := 2 (\curl X).
\end{align}
Using~\eqref{eqn-chi-coclosed-7}, we have
\begin{align} \label{eqn-deturck-W-2}
    W(\chi) = 2 (\curl X) = (\grad \alpha) + \frac{5}{2} (\curl X) - \frac{1}{2} (\div A) + \lot.
\end{align}
Thus, using~\eqref{eqn-deturck-W-2} and the fact that $d \psi = 0$, equation~\eqref{eq:DPplusLaplace-2} finally becomes
\begin{align} \label{eq:DP-final}
    (DP)_{\varphi} (\chi) = e^{8f} \Big( - \cal{L}_{W(\chi)} \psi - \Delta_d \chi + C d \big( {(\div X) \varphi} \big) + d F(\chi) \Big),
\end{align}
for some zeroth-order operator $F$.

In order to be able to apply DeTurck's Trick, we need to verify that the vector field $W(\chi)$ from~\eqref{eqn-deturck-W} is compatible, since the operator $\varphi \mapsto P(\varphi)$ depends on a reference volume form $\vol_R$ and thus is invariant only under volume-preserving diffeomorphisms. But since $\varphi$ is assumed to be coclosed, we have~\cite[proof of Proposition 4.4]{Kar10} that
\begin{align*}
    \div (\curl X) = 0
\end{align*}
and so the gauge-fixing vector field $W(\chi)$ is divergence-free. Thus the $1$-parameter family of diffeomorphisms it generates is volume-preserving, as required.

\subsection{Positivity of the principal symbol}
\label{subsect-pos-symbol}

We now compute the symbol of the operator $P$ at $\varphi$. For simplicity, we consider a slightly modified operator $P'$ with linearization
\begin{align*}
    (DP')_{\varphi} (\chi) = - \Delta_d \chi + C d \big( (\div X) \varphi \big).
\end{align*}
Up to an overall conformal factor (which does not affect the positivity of the principal symbol), this operator $P'$ is the one we obtain from $P$ after applying DeTurck's Trick to remove the Lie derivative term.

Note that since from~\eqref{eqn-chi} we have
\begin{align} \label{eq:chi-orthog}
    \chi = \frac{4}{3} \alpha - X^\flat \wedge \varphi + A \diamond \psi,
\end{align}
it follows by contraction of the above with $\varphi$ on three indices that the vector field $X$ is given by
\begin{align}
\label{eqn-X}
    X_m = -\frac{1}{24} \chi_{mpqr} \varphi_{pqr}.
\end{align}
As such,
\begin{align*}
    \div X = - \frac{1}{24} (\nabla_m \chi_{mpqr}) \varphi_{pqr} + \lot
\end{align*}
and so
\begin{align*}
    \big( d \big( (\div X) \varphi \big) \big){}_{ijkl} &= - \frac{1}{24} \big( (\nabla_i \nabla_m \chi_{mpqr}) \varphi_{pqr} \varphi_{jkl} - (\nabla_j \nabla_m \chi_{mpqr}) \varphi_{pqr} \varphi_{ikl} \\
    &\qquad \qquad + (\nabla_k \nabla_m \chi_{mpqr}) \varphi_{pqr} \varphi_{ijl} - (\nabla_l \nabla_m \chi_{mpqr}) \varphi_{pqr} \varphi_{ijk} \big) + \lot.
\end{align*}
From this, we see that the principal symbol of this component is
\begin{align*}
    \sigma_\xi \big[ d \big( (\div X) \varphi \big) \big]{}_{ijkl} &= - \frac{1}{24} \big( \xi_i \xi_m \chi_{mpqr} \varphi_{pqr} \varphi_{jkl} - \xi_j \xi_m \chi_{mpqr} \varphi_{pqr} \varphi_{ikl} \\
    &\qquad \qquad + \xi_k \xi_m \chi_{mpqr} \varphi_{pqr} \varphi_{ijl} - \xi_l \xi_m \chi_{mpqr} \varphi_{pqr} \varphi_{ijk} \big).
\end{align*}
Recall also that the symbol of the Hodge Laplacian term is
\begin{align*}
    \sigma_\xi \big[ -\Delta_d \chi \big]{}_{ijkl} = \xi_p \xi_p \chi_{ijkl}.
\end{align*}

Denoting by $\dinner{\cdot}{\cdot}$ and $\|\cdot\|$ the tensor inner product and tensor norm, we thus compute
\begin{align*}
    \dbinner{\sigma_\xi \big[ (DP')_{\varphi}(\chi) \big]}{\chi} &= \xi_p \xi_p \chi_{ijkl} \chi_{ijkl} \\
    &\qquad - \frac{1}{24} C \big( \xi_i \xi_m \chi_{mpqr} \varphi_{pqr} \varphi_{jkl} - \xi_j \xi_m \chi_{mpqr} \varphi_{pqr} \varphi_{ikl} \\
    &\qquad \qquad \qquad + \xi_k \xi_m \chi_{mpqr} \varphi_{pqr} \varphi_{ijl} - \xi_l \xi_m \chi_{mpqr} \varphi_{pqr} \varphi_{ijk} \big) \chi_{ijkl} \\
    &= \xi_p \xi_p \chi_{ijkl} \chi_{ijkl} \\
    &\qquad - \frac{1}{24} C \xi_m \chi_{mpqr} \varphi_{pqr} \big( \xi_i \varphi_{jkl} - \xi_j \varphi_{ikl} + \xi_k \varphi_{ijl} - \xi_l \varphi_{ijk} \big) \chi_{ijkl} \\
    &= \|\xi\|^2 \|\chi\|^2 - \frac{1}{24} C \dinner{\xi \hook \chi}{\varphi} \dinner{\chi}{\xi^\flat \wedge \varphi} \\
    &= \|\xi\|^2 \|\chi\|^2 - \frac{1}{6} C \dinner{\xi \hook \chi}{\varphi}^2. \numberthis \label{eq:symbol-pos}
\end{align*}
Using~\eqref{eqn-X} we have
\begin{align*}
    \dinner{\xi \hook \chi}{\varphi} &= \xi_m \chi_{mpqr} \varphi_{pqr} = - 24 \xi_m X_m = -24 \dinner{\xi}{X}.
\end{align*}
Further, we can check that
\begin{align*}
    \|-X^\flat \wedge \varphi\|^2 &= \big( X_i \varphi_{jkl} - X_j \varphi_{ikl} + X_k \varphi_{ijl} - X_l \varphi_{ijk} \big) \big( X_i \varphi_{jkl} - X_j \varphi_{ikl} + X_k \varphi_{ijl} - X_l \varphi_{ijk} \big) \\
    &= 4 X_i \varphi_{jkl} \big( X_i \varphi_{jkl} - X_j \varphi_{ikl} + X_k \varphi_{ijl} - X_l \varphi_{ijk} \big) \\
    &= 168 \|X\|^2 - 12 X_i \varphi_{jkl} X_j \varphi_{ikl} \\ 
    &= 96 \|X\|^2.
\end{align*}
Combining the above two computations and recalling the orthogonal decomposition~\eqref{eq:chi-orthog}, we have
\begin{align*}
    \dinner{\xi \hook \chi}{\varphi}^2 = 576 \dinner{\xi}{X}^2 \leq 576 \|\xi\|^2 \|X\|^2 = 6 \|\xi\|^2 \|-X^\flat \wedge \varphi\|^2 \leq 6 \|\xi\|^2 \|\chi\|^2,
\end{align*}
so
$$ - \frac{1}{6} \dinner{\xi \hook \chi}{\varphi}^2 \geq - \|\xi\|^2 \|\chi\|^2. $$
In particular, if $C \geq 0$, then~\eqref{eq:symbol-pos} becomes
\begin{align*}
    \dbinner{\sigma_\xi \big[ (DP')_{\varphi}(\chi) \big]}{\chi} &= \|\xi\|^2 \|\chi\|^2 - \frac{1}{6} C \dinner{\xi \hook \chi}{\varphi}^2 \geq (1 - C) \|\xi\|^2 \|\chi\|^2,
\end{align*}
while if $C < 0$ then~\eqref{eq:symbol-pos} trivially gives
$$ \dbinner{\sigma_\xi \big[ (DP')_{\varphi}(\chi) \big]}{\chi} \geq \|\xi\|^2 \|\chi\|^2. $$
As such, we see that as long as $C < 1$, then the principal symbol of the operator $P'$ (and hence also of $P$) is strictly positive.

Using the method of Bryant--Xu~\cite{BX11} and Grigorian~\cite{Gri13}, the above observation then yields the following result, which was stated as Theorem~\ref{thm:STE-intro} in the introduction. (Note here that the $G_2$-structures below denote the original conformally coclosed $G_2$-structures, and not the auxiliary coclosed $G_2$-structures.)

\begin{thm}
\label{thm-G2-Anomaly-flow-ste}
    If $C< 1$, then the modified $G_2$-anomaly flow
    \begin{align}
        \del_t (e^{-2f} \psi) = -dH_C = d \big( e^{2f} d^* (e^{-2f} \psi) \big) + (C-2) d \big( (\tr T) \varphi \big). \tag{\ref{eqn-G2-Anomaly-flow}}
    \end{align}
    has short-time existence and uniqueness under the restriction to conformally coclosed $G_2$-structures.
\end{thm}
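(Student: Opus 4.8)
The plan is to deduce the result from the general short-time existence machinery of Bryant--Xu~\cite{BX11} and Grigorian~\cite{Gri13}, feeding in the structural information about the linearization assembled above. As a preliminary reduction I would pass to the auxiliary coclosed $G_2$-structures: by~\eqref{eqn-dilaton-aux} the substitution $\wtilde\varphi = e^{-\frac{3}{2}f}\varphi$ is a smooth bijection between conformally coclosed $G_2$-structures with $d(e^{-2f}\psi)=0$ and coclosed $G_2$-structures, taken relative to the same reference volume form $\vol_R$, under which~\eqref{eqn-G2-Anomaly-flow} becomes $\del_t\wtilde\psi = P(\wtilde\varphi)$ with $P$ as in~\eqref{eq:P-op}. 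So it suffices to prove short-time existence and uniqueness for $\del_t\wtilde\psi = P(\wtilde\varphi)$ within coclosed $G_2$-structures, and from here I would drop the tildes. The two ingredients to use are: (i) by~\eqref{eq:DP-final}, the linearization has the shape $(DP)_\varphi(\chi) = e^{8f}\big(-\cal{L}_{W(\chi)}\psi - \Delta_d\chi + C\,d((\div X)\varphi) + dF(\chi)\big)$ with $W(\chi)=2\,\curl X$ first order in the $7$-component of $\chi$ and $F$ of order zero; and (ii) by~\eqref{eq:symbol-pos} and the estimate following it, on the space of closed $4$-forms the principal symbol of the DeTurck-reduced operator obeys $\dbinner{\sigma_\xi[(DP')_\varphi(\chi)]}{\chi} \ge (1-C)\|\xi\|^2\|\chi\|^2$ (and $\ge\|\xi\|^2\|\chi\|^2$ when $C<0$), hence is strictly positive exactly for $C<1$.

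Next I would carry out DeTurck's trick as in~\cite{BX11,Gri13}. The degeneracy of $\del_t\psi=P(\varphi)$ comes solely from the Lie derivative term $-e^{8f}\cal{L}_{W(\chi)}\psi$ in (i), which the standard modification removes by flowing along a gauge-fixing vector field built from $W$; the modified operator then has linearization at the background structure equal, up to the harmless conformal factor and zeroth-order terms, to the operator $-\Delta_d\chi + C\,d((\div X)\varphi)$ of (ii), which by the symbol computation is strictly parabolic in the closed-form directions whenever $C<1$. The only point demanding attention is that $P$ is defined relative to the fixed form $\vol_R$, hence is equivariant only under volume-preserving diffeomorphisms; this is exactly why the gauge vector field must be divergence-free, which was verified after~\eqref{eqn-deturck-W-2} using $\div(\curl X)=0$ from~\cite[proof of Proposition 4.4]{Kar10}. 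With strict parabolicity transverse to exact forms in hand, I would apply the Nash--Moser implicit function theorem in the tame-Fr\'echet form of~\cite{BX11,Gri13}: build a tame right inverse of the linearized DeTurck operator from the parabolic estimates for its principal part, absorbing $dF(\chi)$ and the remaining lower-order terms, and thereby produce a short-time solution of the DeTurck flow with the prescribed coclosed initial data, which stays in the cohomology class of $\psi_0$ because the right-hand side of the flow is exact.

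Finally I would undo the gauge: letting $\Phi_t$ be the $1$-parameter family of volume-preserving diffeomorphisms generated by the gauge vector field along the DeTurck solution, with $\Phi_0=\id$, the forms $\Phi_t^*$ of that solution solve $\del_t\psi=P(\varphi)$, and rescaling back by the auxiliary dilaton recovers a short-time solution of~\eqref{eqn-G2-Anomaly-flow} among conformally coclosed structures. Uniqueness is the usual argument: two solutions of~\eqref{eqn-G2-Anomaly-flow}, after the canonical gauge fixing, give two solutions of the same strictly parabolic DeTurck flow, which agree by parabolic uniqueness; the generating diffeomorphisms then agree by ODE uniqueness, so the original solutions coincide. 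The main obstacle, and the reason one cannot simply invoke quasilinear parabolic theory, is that the DeTurck-reduced operator is not elliptic in all directions but only parabolic transverse to the exact $4$-forms, so the Nash--Moser framework of~\cite{BX11,Gri13} is genuinely required, together with the (already established) divergence-freeness of the gauge vector field that keeps the diffeomorphism gauging compatible with the reference-volume normalization defining $f$. Everything else is a routine adaptation of Grigorian's proof for the modified Laplacian coflow to the conformally coclosed setting.
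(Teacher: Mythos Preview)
Your proposal is correct and follows essentially the same approach as the paper: reduce to the auxiliary coclosed structures, use the linearization~\eqref{eq:DP-final} and the symbol estimate~\eqref{eq:symbol-pos} to see strict parabolicity of the DeTurck-reduced operator on closed $4$-forms for $C<1$, verify the gauge vector field $W(\chi)=2\,\curl X$ is divergence-free so that the diffeomorphism gauging preserves the reference volume, and then invoke the Nash--Moser framework of~\cite{BX11,Gri13}. In fact you have spelled out more of the DeTurck/uniqueness argument than the paper does, which simply records that ``the above observation'' together with the Bryant--Xu and Grigorian method yields the theorem.
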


\begin{rmk}
    This result generalizes (and recovers) the result of Grigorian from~\cite{Gri16}, where it is stated that the modified Laplacian coflow
    \begin{align*}
        \del_t \psi = d d^* \psi + kd \big( (A - \tr T) \varphi \big)
    \end{align*}
    has short-time existence and uniqueness if $k > 1$ under the restriction to coclosed $G_2$-structures. (Grigorian's modified coflow corresponds to the modified $G_2$-anomaly flow~\eqref{eqn-G2-Anomaly-flow} when $f$ is constant, $C = - k$, and $k A = - 2$, so the condition $k > 1$  corresponds to $C < 1$ in our notation).
\end{rmk}

\begin{rmk}
It is natural to ask the following question. Since these flows have short-time existence and uniqueness for $C<1$, can we find a particular $C_0 < 1$ for which it is true that if $\tau_0$ is initially zero, then it stays zero along the flow (using a maximum principle argument, for example). If this were the case, then the flow for $C = \frac{4}{3}$ would coincide with the flow for $C_0$ for such initial data, since we would have $\tr T = 0$ for all time. In such case, it would follow from~\eqref{eqn-fixed-pt-G2-Anomaly-flow-1} and its following paragraph that the fixed points would be torsion-free. The authors attempted to find such a $C_0$, but were unsuccessful. It would be interesting to find an explicit counterexample, perhaps on homogeneous spaces.
\end{rmk}

\section{Summary}
\label{sect-summary}

We summarize the results of this paper in Table~\ref{table2} below.

\begin{table}[H]
{\smaller[1]
\begin{center}
\begin{tabular}{| m{1.5cm} | m{6.4cm} | m{6.8cm} |}
    \hline
    & \textbf{Modified Laplacian Coflow} & \textbf{Modified $G_2$-Anomaly Flow} \\
    \hline
    Evolution Equation & $\del_t \psi = d d^* \psi + (C-2) d \big( (\tr T) \varphi \big)$ & $\del_t (e^{-2f} \psi) = d \big(e^{2f} d^* (e^{-2f} \psi) \big) + (C-2) d \big( (\tr T) \varphi \big)$ \\
    \hline
    Preserved Condition & $d \psi = 0$ & $d (e^{-2f} \psi) = 0$ \\
    \hline
    Short-Time Existence/ Uniqueness & If $C < 1$ (sufficient, may not be necessary) & If $C < 1$ (sufficient, may not be necessary) \\
    \hline
    \multirow{4}{1.5cm}{Fixed Points} & If $C \neq 1$, then $\tau_0$ constant, $\div T = 0$, and $\tau_3$ has constant norm and is coclosed. & If $C \neq 1$, then $\tau_0 = K \exp{(- \frac{3 (C - \frac{4}{3})}{2 (C-1)}f)}$ for some constant $K$. \\
    & & If $C = \frac{4}{3}$, then $\tau_0$ is constant and $(g,H,f)$ is a generalized Ricci soliton. \\
    & If $C = \frac{10}{7}$, then $\varphi$ is torsion-free or nearly parallel. & If $C = \frac{10}{7}$, then $f$ is constant and $\varphi$ is torsion-free or nearly parallel. \\
    & If $C > \frac{10}{7}$, then $\varphi$ is torsion-free. & If $C > \frac{10}{7}$, then $f$ is constant and $\varphi$ is torsion-free. \\
    \hline
\end{tabular}
\end{center}
}
\caption{Summary of fixed points and short-time existence results.} \label{table2}
\end{table}

Interestingly, the situation is reminiscent of the Heisenberg Uncertainty Principle, in that for these flows, we seem to be able to either know short-time existence and uniqueness, or to know properties of the fixed points, but not both simultaneously (other than those properties corresponding to $C \neq 1$). In particular, the case of interest in physics, which corresponds to $C = \frac{4}{3}$, is outside the current range of values of $C < 1$ for which we can establish short-time existence and uniqueness using known methods.

\appendix

\section{Decomposition of exact 4-forms}

\label{app-decomp-exact-4-forms}

In this appendix, we obtain expressions for the $1$-, $7$-, and $27$-component of an exact $4$-form on a $7$-manifold $M$ with $G_2$-structure $\varphi$. These are derived using methods from~\cite{DGK25,Kar09}. We sketch the argument and leave some of the details to the reader. Note that we make no assumptions here on the torsion, so this result could find wider application.

Consider an exact $4$-form
\begin{align*}
    \eta = d (B \diamond \varphi)
\end{align*}
for some tensor $B$. We want to write
\begin{align*}
    \eta = S \diamond \psi
\end{align*}
for some tensor $S$.

To do this, we first compute the tensor $\eta^\psi_{ia} = \eta_{ijkl} \psi_{ajkl}$. We have
\begin{align*}
    \eta^\psi_{ia} &= \eta_{ijkl} \psi_{ajkl} \\
    &= \big( \nabla_i (B \diamond \varphi)_{jkl} - \nabla_j (B \diamond \varphi)_{ikl} + \nabla_k (B \diamond \varphi)_{ijl} - \nabla_l (B \diamond \varphi)_{ijk} \big) \psi_{ajkl} \\
    &= \nabla_i (B \diamond \varphi)_{jkl} \psi_{ajkl} - 3 \nabla_j (B \diamond \varphi)_{ikl} \psi_{ajkl} \\
    &= \nabla_i \big( B_{jp} \varphi_{pkl} - B_{kp} \varphi_{pjl} + B_{lp} \varphi_{pjk} \big) \psi_{ajkl} - 3 \nabla_j \big( B_{ip} \varphi_{pkl} - B_{kp} \varphi_{pil} + B_{lp} \varphi_{pik} \big) \psi_{ajkl} \\
    &= 3 (\nabla_i B_{jp}) \varphi_{pkl} \psi_{ajkl} - 3 (\nabla_j B_{ip}) \varphi_{pkl} \psi_{ajkl} + 6 (\nabla_j B_{kp}) \varphi_{pil} \psi_{ajkl} \\
    &\qquad + 3 B_{jp} T_{im} \psi_{mpkl} \psi_{ajkl} - 3 B_{ip} T_{jm} \psi_{mpkl} \psi_{ajkl} + 6 B_{kp} T_{jm} \psi_{mpil} \psi_{ajkl}.
\end{align*}    
Using contraction identities for $\varphi$ and $\psi$, this becomes
\begin{align*}
    \eta^\psi_{ia} &= - 12 (\nabla_i B_{jp}) \varphi_{paj} + 12 (\nabla_j B_{ip}) \varphi_{paj} \\
    &\qquad + 6 (\nabla_j B_{kp}) \big( g_{pa} \varphi_{ijk} - g_{pj} \varphi_{iak} + g_{pk} \varphi_{iaj} - g_{ia} \varphi_{pjk} + g_{ij} \varphi_{pak} - g_{ik} \varphi_{paj} \big) \\
    &\qquad + 3 B_{jp} T_{im} \big( 4 g_{ma} g_{pj} - 4 g_{mj} g_{pa} - 2 \psi_{mpaj} \big) - 3 B_{ip} T_{jm} \big( 4 g_{ma} g_{pj} - 4 g_{mj} g_{pa} - 2 \psi_{mpaj} \big) \\
    &\qquad + 6 B_{kp} T_{jm} \big( - \varphi_{api} \varphi_{mjk} - \varphi_{mai} \varphi_{pjk} - \varphi_{mpa} \varphi_{ijk} \\
    &\qquad \qquad \qquad \qquad + g_{ma} g_{pj} g_{ik} + g_{mj} g_{pk} g_{ia} + g_{mk} g_{pa} g_{ij} - g_{ma} g_{pk} g_{ij} - g_{mj} g_{pa} g_{ik} - g_{mk} g_{pj} g_{ia} \\
    &\qquad \qquad \qquad \qquad - g_{ma} \psi_{pijk} - g_{pa} \psi_{imjk} - g_{ia} \psi_{mpjk} + g_{aj} \psi_{mpik} - g_{ak} \psi_{mpij} \big)
\end{align*}
which then further simplifies to 
\begin{align*}
    \eta^\psi_{ia} &= - 6 (\nabla_i B_{pq}) \varphi_{pqa} + 6 (\nabla_p B_{iq}) \varphi_{pqa} + 6 (\nabla_p B_{qa}) \varphi_{pqi} \\
    &\qquad - 6 (\nabla_m B_{km}) \varphi_{kia} + 6 (\nabla_k B_{mm}) \varphi_{kia} - 6 (\nabla_m B_{pq}) \varphi_{mpq} g_{ia} \\
    &\qquad + 6 B_{mm} T_{ia} - 6 B_{im} T_{ma} - 6 B_{pq} T_{qp} g_{ia} + 6 T_{mm} B_{ia} - 6 T_{im} B_{ma} + 6 B_{mm} T_{kk} g_{ia} \\
    &\qquad + 6 B_{pq} \varphi_{pqm} T_{mk} \varphi_{kia} + 6 T_{pq} \varphi_{pqm} B_{mk} \varphi_{kia} - 6 B_{pq} T_{rs} \varphi_{pri} \varphi_{qsa} \\
    &\qquad - 6 B_{im} T_{pq} \psi_{pqma} + 6 B_{pq} \psi_{pqim} T_{ma} - 6 T_{im} B_{pq} \psi_{pqma} + 6 T_{pq} \psi_{pqim} B_{ma} \\
    &\qquad - 6 B_{pq} \psi_{pqim} T_{am} - 6 T_{pq} \psi_{pqim} B_{am} + 6 B_{pq} T_{rs} \psi_{pqrs} g_{ia}.
\end{align*}

Using~\cite[Corollary 2.33]{DGK25}, we have that $\eta = S \diamond \psi$ where
\begin{align}
    \tr S = \frac{1}{96} \tr \eta^\psi &= - \frac{1}{2} (\nabla_m B_{pq}) \varphi_{mpq} + \frac{1}{2} B_{mm} T_{kk} - \frac{1}{2} B_{pq} T_{qp} + \frac{1}{4} B_{pq} T_{rs} \psi_{pqrs}, \numberthis \label{eqn-decomp-exact-4-form-1}
\end{align}
\begin{align*}
    (S_7)_{ia} = \frac{1}{36} (\eta^\psi_7)_{ia} &= - \frac{1}{6} (\nabla_m B_{km}) \varphi_{kia} + \frac{1}{6} (\nabla_k B_{mm}) \varphi_{kia} - \frac{1}{12} (\nabla_m B_{pq}) \psi_{mpqk} \varphi_{kia} \\
    &\qquad + \frac{1}{12} B_{mm} T_{pq} \varphi_{pqk} \varphi_{kia} + \frac{1}{12} T_{mm} B_{pq} \varphi_{pqk} \varphi_{kia} \\
    &\qquad - \frac{1}{12} B_{pm} T_{mq} \varphi_{pqk} \varphi_{kia} - \frac{1}{12} T_{pm} B_{mq} \varphi_{pqk} \varphi_{kia} \\
    &\qquad + \frac{1}{12} B_{pq} \varphi_{pqm} T_{km} \varphi_{kia} + \frac{1}{12} T_{pq} \varphi_{pqm} B_{km} \varphi_{kia}, \numberthis \label{eqn-decomp-exact-4-form-7}
\end{align*}
and
\begin{align*}
    (S_{27})_{ia} = \frac{1}{12} (\eta^\psi_{27})_{ia} &= - \frac{1}{4} \big( (\nabla_i B_{pq}) \varphi_{pqa} + (\nabla_a B_{pq}) \varphi_{pqi} \big) + \frac{1}{4} \big( (\nabla_p B_{iq}) \varphi_{pqa} + (\nabla_p B_{aq}) \varphi_{pqi} \big) \\
    &\qquad + \frac{1}{4} \big( (\nabla_p B_{qi}) \varphi_{pqa} + (\nabla_p B_{qa}) \varphi_{pqi} \big) + \frac{1}{14} (\nabla_m B_{pq}) \varphi_{mpq} g_{ia} \\
    &\qquad + \frac{1}{4} B_{mm} \big( T_{ia} + T_{ai} \big) - \frac{1}{4} \big( B_{im} T_{ma} + B_{am} T_{mi} \big) \\
    &\qquad + \frac{1}{4} T_{mm} \big( B_{ia} + B_{ai} \big) - \frac{1}{4} \big( T_{im} B_{ma} + T_{am} B_{mi} \big) \\
    &\qquad + \frac{1}{4} B_{pq} \big( \psi_{pqim} T_{ma} + \psi_{pqam} T_{mi} \big) + \frac{1}{4} T_{pq} \big( \psi_{pqim} B_{ma} + \psi_{pqam} B_{mi} \big) \\
    &\qquad - \frac{1}{4} B_{pq} T_{rs} \big( \varphi_{pri} \varphi_{qsa} + \varphi_{pra} \varphi_{qsi} \big) \\
    &\qquad - \frac{1}{14} B_{mm} T_{kk} g_{ia} + \frac{1}{14} B_{pq} T_{qp} g_{ia} + \frac{3}{14} B_{pq} T_{rs} \psi_{pqrs} g_{ia}. \label{eqn-decomp-exact-4-form-27} \numberthis
\end{align*}

\bibliography{biblio}

\bibliographystyle{siam}

\end{document}